\definecolor{darkblue}{rgb}{0,0,0.5}
\newcommand{\executeiffilenewer}[3]{%
  \ifnum\pdfstrcmp{\pdffilemoddate{#1}}%
  {\pdffilemoddate{#2}}>0%
  {\immediate\write18{#3}}\fi%
}
\newcommand{%
  \executeiffilenewer{.svg}{.pdf}%
  {inkscape -z -D --file=.svg %
    --export-pdf=.pdf --export-latex}%
  \input{.pdf_tex}%
  }[1]{%
  \executeiffilenewer{#1.svg}{#1.pdf}%
  {inkscape -z -D --file=#1.svg %
    --export-pdf=#1.pdf --export-latex}%
  \input{#1.pdf_tex}%
  }
\numberwithin{equation}{section}
\def\PP{\mathbb{P}}
\def\RR{\mathbb{R}}
\def\NN{\mathbb{N}}
\def\ZZ{\mathbb{Z}}
\def\EE{\mathbb{E}}
\def\11{\mathbbm{1}}
\def\E{\mathbb{E}}
\def\P{\mathbb{P}}
\def\N{\mathbb{N}}
\def\d{\partial}
\def\Z{\mathbb{Z}}
\newcommand{\aj}{j}
\newtheorem{thm}{Theorem}[section]
\newtheorem{lem}[thm]{Lemma}
\newtheorem{cor}[thm]{Corollary}
\newtheorem{prop}[thm]{Proposition}
\theoremstyle{remark}
\newtheorem{rem}{Remark}
\newcommand{\tJ}{\mathfrak{J}}
\newcommand{\tP}{\mathfrak{P}}
\newcommand{\tQ}{\mathfrak{Q}}
\newcommand{\tR}{\mathfrak{R}}
\newcommand{\tS}{\mathfrak{S}}
\newcommand{\dd}{\mathrm{d}}
\newcommand{\vertiii}[1]{{\left\vert\kern-0.25ex\left\vert\kern-0.25ex\left\vert #1 
    \right\vert\kern-0.25ex\right\vert\kern-0.25ex\right\vert}}
    \def\restriction#1#2{\mathchoice
                  {\setbox1\hbox{${\displaystyle #1}_{\scriptstyle #2}$}
                  \restrictionaux{#1}{#2}}
                  {\setbox1\hbox{${\textstyle #1}_{\scriptstyle #2}$}
                  \restrictionaux{#1}{#2}}
                  {\setbox1\hbox{${\scriptstyle #1}_{\scriptscriptstyle #2}$}
                  \restrictionaux{#1}{#2}}
                  {\setbox1\hbox{${\scriptscriptstyle #1}_{\scriptscriptstyle #2}$}
                  \restrictionaux{#1}{#2}}}
    \def\restrictionaux#1#2{{#1\,\smash{\vrule height .8\ht1 depth .85\dp1}}_{\,#2}}
\newcommand\xqed[1]{%
    \leavevmode\unskip\penalty9999 \hbox{}\nobreak\hfill
    \quad\hbox{#1}}
\newcommand\erem{\xqed{$\triangle$}}
\begin{document}

\title{Quasi-stationary distributions in reducible state spaces}

\author{Nicolas Champagnat$^{1}$, Denis Villemonais$^{1,2,3}$}

\footnotetext[1]{Universit\'e de Lorraine, CNRS, Inria, IECL, F-54000 Nancy, France}
\footnotetext[2]{Université de Strasbourg, IRMA, Strasbourg, France}  
\footnotetext[3]{Institut Universitaire de France\\
	E-mail: Nicolas.Champagnat@inria.fr, Denis.Villemonais@unistra.fr}

\maketitle

\begin{abstract}
  We study quasi-stationary distributions and quasi-limiting behavior of Markov chains in general reducible state spaces with
  absorption. 
  
  Firstly, we consider state spaces that can be decomposed into two successive
  subsets (between which communication is only possible in a single direction), differentiating between three situations: either the process exits the first set at a higher pace, or the second set at a higher pace, or both space at a comparable pace. These first results allow us to characterize the
  exponential order of magnitude and the exact polynomial correction, called polynomial convergence parameter, for the leading order
  term of the semigroup for large time. We also provide explicit convergence speeds to this leading order term. 
  
  Secondly, we consider  general Markov chains with finitely or countably many communication classes by applying the first results iteratively over the
  communication classes of the chain. We  characterize explicitely the polynomial convergence parameter, determine the
  complete set of quasi-stationary distributions and provide explicit estimates for the speed of convergence to quasi-limiting
  distributions in the case of finitely many communication classes.
  
  We conclude with an application of these results to the case of
  denumerable state spaces, where we prove that, in general, there is existence of a quasi-stationary distribution
  without assuming irreducibility before absorption. This  holds true assuming only aperiodicity, the existence of a Lyapunov
  function and the existence of a point in the state space from which the return time is finite with positive probability.
\end{abstract}

\noindent\textit{Keywords:} Markov chains with absorption; reducible Markov chains; quasi-stationary distribution; mixing property;
quasi-limiting distributions; polynomial convergence.

\medskip\noindent\textit{2010 Mathematics Subject Classification.} %Primary: 
37A25, 60B10, 60F99, 60J05.
% \begin{itemize}
% \item 60J05 Discrete-time Markov processes on general state spaces
% \item ? 60J10 Markov chains (discrete-time Markov processes on discrete state spaces)
% \item 60J25 Continuous-time Markov processes on general state spaces
% \item ? 60J27 Continuous-time Markov processes on discrete state spaces
% \item 37A25 Ergodicity, mixing, rates of mixing
% \item 60F99 None of the above, but in this Section <- Limit Theos <- Probability theory and stochastic processes
% \item 60B10 Convergence of probability measures
% \end{itemize}
% Secondary: 60J60, 60J75, 60J80, 93E03.
% \begin{itemize}
% \item 60J80 Branching processes (Galton-Watson, birth-and-death, etc.)
% \item 60J60 Diffusion processes
% \item 60J75 Jump processes
% % \item ? 60G10 Stationary processes
% % \item 92D25 Population dynamics (general)
% \item 93E03 Stochastic systems, general
% \end{itemize}

% \tableofcontents

\section{Introduction}
\label{sec:intro}

Let $(X_n,n\in \ZZ_+)$ be a Markov chain in $D\cup\{\d\}$ where $D$ is a measurable space, $\d\not\in D$ and $\ZZ_+:=\{0,1,\ldots\}$. For all $x\in D\cup\{\d\}$, we denote as usual by $\PP_x$ the law of $X$ given $X_0=x$ and for any probability measure $\mu$ on $D\cup\{\d\}$, we define $\PP_\mu=\int_{D\cup\{\d\}}\PP_x\,\mu(dx)$. We also denote by $\E_x$ and $\E_\mu$ the associated expectations. We assume that $\d$ is absorbing, which means that $X_n=\d$ for all $n\geq \tau_\d$, $\PP_x$-almost surely, where 
\[ 
\tau_\d=\inf\{n\in \mathbb Z_+,\,X_n=\d\}.  
\]
We study the sub-Markovian transition semigroup of $X$, $(S_n)_{n\in\Z_+}$, defined as
\begin{align*}
S_n f(x)=\E_x\left(f(X_n)\11_{n<\tau_\d}\right),\ \forall n\in\Z_+,
\end{align*}
for all bounded or nonnegative measurable function $f$ on $D$ and all $x\in D$. We also define as usual the left-action of $P_n$ on
measures as
\[
\mu S_n f=\EE_\mu\left(f(X_n)\11_{n<\tau_\d}\right)=\int_D S_nf(x)\,\mu(dx),
\]
for all probability measure $\mu$ on $D$ and all bounded or nonnegative measurable function~$f:D\to \RR$.

The purpose of this article is to provide original and practical criteria allowing to study the quasi-limiting behaviour of absorbed, reducible Markov processes in general state spaces, both in cases of geometric and polynomial convergence in total variation to a
quasi-stationary distribution.

We recall that a quasi-stationary distribution (QSD) for $X$ is a probability measure $\nu_{QS}$ on $D$ such that
\[
\PP_{\nu_{QS}}(X_n\in\cdot\mid n<\tau_\d)=\frac{\nu_{QS} S_n}{\nu_{QS} S_n\11_D}=\nu_{QS},\ \forall n\geq 0.
\]
It is well known that a probability measure $\nu_{QS}$ is a QSD for $X$ if and only if it is a  quasi-limiting distribution (see
e.g.~\cite{ColletMartinezEtAl2013,MeleardVillemonais2012}). By a  quasi-limiting distribution $\nu$, we mean a probability measure $\nu$ such that, for some probability measure $\mu$ on $D$ and for any measurable subset $\Gamma\subset D$, the
conditional probabilities $\PP_\mu(X_n\in \Gamma\mid n<\tau_\d)$ converges to $\nu(\Gamma)$.
To each QSD $\nu_{QS}$ is associated an \emph{exponential convergence parameter}  $\theta\in(0,1]$, such that 
\[
\mathbb{P}_{\nu_{QS}}(\tau_\d\geq n)=\theta^n,\ \forall n\geq 0.
\]
This parameter is called a \textit{convergence parameter} in~\cite{NiemiNummelin1986}, and we add the term \textit{exponential} to distinguish it from the \textit{polynomial convergence parameter} that we introduce below.

The study of quasi-limiting behaviour of Markov chains on reducible state spaces started with the work of
  Mandl~\cite{Mandl1959} (see also~\cite{darroch-seneta-65}). Since then, several works studied cases of finite state
  spaces~\cite{Ogura1975,CattiauxMeleard2010,ChampagnatDiaconisEtAl2012,BenaimCloez2015,BenaimCloezEtAl2016,DoornPollett2008,DoornPollett2009}
  or infinite state spaces~\cite{Gosselin2001,ChampagnatRoelly2008}. Most of these works are devoted to spectific processes, while the
  articles~\cite{DoornPollett2008,DoornPollett2009} address the general situation in finite state spaces (see also the
  survey~\cite{DoornPollett2013}).

 In order to obtain results on general state spaces, we make use of results on the principal
eigenvalue and eigenvectors of iterates of upper triangular matrices of linear operators over a Banach space. This allows us to prove sufficient conditions ensuring that a reducible process $X$ satisfies
\begin{equation}
\label{eq:cv-subgeom}
\left\|\theta^{-n} n^{-j(x)} \PP_x(X_n\in\cdot)-\eta(x)\nu_{QS}\right\|\xrightarrow[n\to+\infty]{} 0,\quad \forall x\in D,
\end{equation}
for some QSD $\nu_{QS}$ (which may depend on $x$), some measurable functions $\eta:D\to[0,+\infty)$ and $j:D\to\ZZ_+=\{0,1,\ldots\}$,
and where $\|\cdot\|$ is a weighted total variation norm (see Assumption~(A) in Section~\ref{sec:expopopar} for more details). We
call the function $j$ the \textit{polynomial convergence parameter}. and prove several properties of $j$, $\eta$ and $\nu_{QS}$ in
Section~\ref{sec:expopopar}.

We emphasize that for many usual irreducible Markov processes, the quasi-limiting behaviour is well understood and it is known that
this result holds true with $j\equiv 0$ and a QSD $\nu_{QS}$ independent of $x$ (see for
instance~\cite{ColletMartinezEtAl2013,MeleardVillemonais2012,ChampagnatVillemonais2016b,ChampagnatVillemonais2017b}). This is also
true for some reducible processes with exponential convergence (see~\cite[Thm\,6.1]{ChampagnatVillemonais2017b}). { \color{black}
  Compared to the existing results of the literature, our goal is to provide complete results applying to general processes, as done
  in finite state spaces in~\cite{DoornPollett2008,DoornPollett2009,DoornPollett2013}. Compared to these works, we consider reducible
  processes in general state spaces that can be decomposed into finitely or denumerably many communication classes. We are also able
  to characterize explicitely the polynomial convergence parameter $j$ and the possibly subgeometric convergence rate associated to
  each communication class and we fully characterize the support of $\eta$ and more generally the sets of initial conditions where
  the survival probability has some given asymptotic behavior. Our results also apply to processes with denumerably many
  communication classes provided that only finitely many of them have maximal exponential convergence parameter. We make a more
  detailed review of the results of the literature in the beginning of Section~\ref{sec:finite-classes} and we elaborate on the
  novelties of our work compared to~\cite{DoornPollett2008,DoornPollett2009,DoornPollett2013} after stating our main result,
  Theorem~\ref{thm:finite-classes}, in Section~\ref{sec:finite-classes}. }
% The main novelties of this paper are firstly to consider reducible processes in general state spaces with sub-geometric convergence
% in~\eqref{eq:cv-subgeom} and with non-uniformly zero $j$, and secondly to provide a general methodology to obtain precise convergence
% results as in~\eqref{eq:cv-subgeom}.
{\color{black}Finally, we emphasize that,} following the same approach as in~\cite{ChampagnatVillemonais2019},
all the results of the present paper can be easily extended to non sub-Markov semigroups.

The paper is organized as follows. In Section~\ref{sec:expopopar}, we present our main assumption and its first consequences. In
Section~\ref{sec:three-sets}, we consider reducible sub-Markov processes with two successive sets where this assumption is satisfied.
{\color{black}Three cases are considered in Subsections~\ref{sec:D1source} to~\ref{sec:D1criticalsink} depending on how the exponential
convergence parameter of the two successive sets compare.}
We then consider in Section~\ref{sec:finite-classes} reducible sub-Markov processes with several communication classes. As an
application, we prove in Section~\ref{sec:discretelyap} that, under a mild Lyapunov assumption, processes on discrete state spaces
always admit quasi-limiting distributions.
% In the Appendix, we provide a detailed study of iterates of upper triangular matrix of linear operators over a Banach space.

%\section{The polynomial convergence parameter}

\bigskip\noindent\textbf{Notation.} The set $\mathcal M(D)$ is the Banach space of finite signed measures over $D$, endowed with the
total variation norm. We denote by $\mathcal M_+(D)\subset \mathcal M(D)$ the set of non-negative finite measures over $D$.  Given a
positive measurable function $W$, the set $\mathcal M(W)$ is the Banach space of signed measures $\mu$ such that $|\mu|(W)<+\infty$, endowed with the norm
\[
\|\mu\|_W:=|\mu|(W).
\]
We extend the operator $S_n$ to $\mathcal M(D)$ by $\mu S_n=\int_D \delta_x S_n \,\mu(\mathrm dx)$.
The set $L^\infty(W)$ is the Banach space of measurable functions $f$ such that $\|f/W\|_\infty<+\infty$, endowed with norm
\[
\|f\|_W:=\|f/W\|_\infty.
\]
Because of the nature of our problem, we will often consider the extensions to $D\cup\{\d\}$ of functions defined on a subset of $D\cup\{\d\}$. Systematically and without further notice, all functions are extended by the value $0$ outside of their domain of definition. In all the sequel, $C$ will denote a finite constant that may change from line to line.

\section{Exponential and polynomial convergence parameter}
\label{sec:expopopar}

%We consider a discrete time Markov process $(X_n,n\in\ZZ_+)$ evolving in a measurable set $D\cup\{\d\}$ with absorption at $\d\notin
%D$ at time $\tau_\d$, and sub-Markovian semigroup $(S_n)_{n\in\ZZ_+}$. 

The \textit{exponential convergence parameter} of the semigroup $(S_n)_{n\in\N}$  {\color{black} is given as a function of $\mu\in {\cal M}_+(D)$} by
\[
\theta_S(\mu):=\inf\left\{\theta\geq 0,\  \liminf_{n\to+\infty} \theta^{-n} \mu S_n\11_D=0\right\}.%,\ \forall \mu\in {\cal M}_+(D).
\]
We also set $\theta_{0,S}=\sup_{x\in D} \theta_{S}(x)$, where $\theta_S(x)=\theta_S(\delta_x)$. We define the \textit{polynomial
  convergence parameter} of the semigroup $(S_n)_{n\in\N}$ {\color{black} as a function of $\mu\in {\cal M}_+(D)$} by
\begin{equation}
  \label{eq:def-j_S}
  j_S(\mu):=\inf\{\ell\geq 0,\  \liminf_{n\to+\infty} n^{-\ell} \theta_{0,S}^{-n}\mu S_n\11_D=0\}.%,\ \forall \mu\in {\cal M}_+(D) 
\end{equation}
with the convention $\inf\emptyset=+\infty$.
We also set $j_{0,S}=\sup_{x} j_S(x)$, where $j_S(x):=j_S(\delta_x)$. Note that if $\theta_S(\mu)<\theta_{0,S}$, then
 $j_S(\mu)=0$. We will see in Proposition~\ref{prop:propajs1} below that the converse inequality $\theta_S(\mu)>\theta_{0,S}$ never happens.

In this section, we are interested in the implications of the following assumption~(A) on $j_S$ and on the existence and convergence toward a quasi-statio\-nary distribution for $X$. In the following sections, we will study sufficient properties implying that $X$ satisfies this condition.

\medskip \noindent \textbf{Assumption (A).} We have $\theta_{0,S}\in(0,1]$, $j_S$ is integer valued and there exist a measurable
function $W_S:D\rightarrow[1,+\infty)$, 
a finite or countable set $I_S$ and some probability measures $\nu_{S,i}\in\mathcal
M(W_S)$ and non-identically zero non-negative $\eta_{S,i}\in L^\infty(W_S)$ for each $i\in I_S$, such that
\begin{align}
\label{eq:etanu}
\sum_{i\in I_S} \eta_{S,i}\nu_{S,i}(W_S)\in L^\infty(W_S)
\end{align}
and such that, for all $f\in L^\infty(W_S)$, all $n\geq 1$ and all $x\in
D$,
\begin{equation}
\label{eq:eta}
\left|\theta_{0,S}^{-n} n^{-j_S(x)} \EE_x(f(X_n)\11_{n< \tau_\d})-\sum_{i\in I_S} \eta_{S,i}(x)\nu_{S,i}(f)\right|\leq \alpha_{S,n} W_S(x)\|f\|_{W_S},
\end{equation}
where $\alpha_{S,n}$ goes to $0$ when $n\to+\infty$.

\medskip When Assumption~(A) holds true, we define
\begin{align}
\label{eq:defeatS}
\eta_S:=\sum_{i\in I_S} \eta_{S,i}\in L^\infty(W_S),
\end{align}
{\color{black} where $\eta_S\in L^\infty(W_S)$ is a consequence of~\eqref{eq:eta} with $f\equiv 1$.}
Note that~\eqref{eq:eta} only gives an equivalent of $\delta_xS_n\11_D$ when $\eta_{S,i}(x)>0$ for at least one
$i\in I_S$, or equivalently when $\eta_S(x)>0$. In particular, for all $x$ such that $\theta_S(x)<\theta_{S,0}$,~\eqref{eq:eta} implies that $\eta_{S,i}(x)=0$ for all
$i\in I_S$.

We also emphasize that, for all $x\in D$ such that $\eta_S(x)>0$,~\eqref{eq:eta} entails that $\PP_x(X_n\in\cdot
  \mid n< \tau_\d)=\frac{\delta_x S_n}{\delta_x S_n\mathbbm{1}_D}$ converges in $\mathcal M(W_S)$ toward $\frac{1}{\eta_S(x)}\sum_{i\in I_S} \eta_{S,i}(x)\nu_{S,i}$, which is thus a quasi-limiting distribution and hence a
quasi-stationary distribution. The rest of this section is dedicated to the exposition and proofs of finer properties on $j_S$ and on
the quasi-stationary distributions of $X$ under Assumption~(A).

\medskip

\begin{rem}
    The results of this paper are stated in the discrete-time setting. The adaptation to the continuous time setting can be obtained  by considering Assumption~(A) for the included Markov chain and by assuming in addition that, for all $t\in [0,1]$, $\E_x(W_S(X_t))\leq C W_S(x)$ for some constant $C>0$.
    \erem
\end{rem}

\medskip

We start our study with simple properties on the polynomial convergence parameter $j_S$. 

\begin{prop}
    \label{prop:propajs1}
    For all $\mu\in\mathcal M_+(D)$,
    \begin{align}
    \label{eq:prop11theta}
    \theta_S(\mu)\geq \sup\Big\{\theta\geq 0,\ \mu\{x,\ \theta_S(x)\geq \theta\}>0\Big\}
    \end{align}
    and
    \begin{align}
    \label{eq:prop11}
    j_S(\mu)\geq \sup\Big\{\ell\geq 0,\ \mu\{x,\ \aj_S(x)\geq \ell\}>0\Big\}
    \end{align}
    If Assumption~(A) holds true, then $j_S$ is lower semi-continuous on $\mathcal M_+(W_S)$ and, for all $\mu\in \mathcal M_+(W_S)$,
     \begin{align}
    \label{eq:prop12theta}
    \theta_S(\mu)= \sup\Big\{\theta\geq 0,\ \mu\{x,\ \theta_S(x)\geq \theta\}>0\Big\}
    \end{align}
    and
    \begin{align}
    \label{eq:prop12}
    \aj_S(\mu)=\sup\Big\{\ell\geq 0,\ \mu\{x,\ \aj_S(x)\geq \ell\}>0\Big\}.
    \end{align}
    In addition,
    \begin{align}
    \label{eq:prop13}
    \aj_S(\mu)=\aj_S(\mu S_1)
    \end{align}
    and $(\aj_S(X_n))_{n\geq 0}$ is $\P_x$-almost surely non-increasing, for all $x\in D$.
\end{prop}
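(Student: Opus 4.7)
For the lower bounds~\eqref{eq:prop11theta} and~\eqref{eq:prop11}, I would use Fatou's lemma, which requires no structural assumption. Fix $\theta'<\theta$ with $\mu\{x:\theta_S(x)\geq\theta\}>0$. Fatou gives
\[
\liminf_{n\to\infty}(\theta')^{-n}\mu S_n\11_D \,\geq\, \int_{\{\theta_S\geq\theta\}}\liminf_{n\to\infty}(\theta')^{-n}\delta_x S_n\11_D\,\mu(\dd x),
\]
and the integrand is strictly positive on the integration set by definition of $\theta_S(x)$. Hence $\theta_S(\mu)\geq\theta'$ for every $\theta'<\theta$, proving~\eqref{eq:prop11theta}. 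The identical argument with weight $n^{-\ell}\theta_{0,S}^{-n}$ in place of $(\theta')^{-n}$ yields~\eqref{eq:prop11}.

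To upgrade to the equalities~\eqref{eq:prop12theta} and~\eqref{eq:prop12} under Assumption~(A), set $j^*:=\sup\{\ell:\mu\{\aj_S\geq\ell\}>0\}$. The case $j^*=+\infty$ follows directly from~\eqref{eq:prop11}; assume $j^*<\infty$. Using the integer-valuedness of $\aj_S$, I decompose $\mu=\sum_{j=0}^{j^*}\mu_j$ where $\mu_j$ is the restriction of $\mu$ to $\{\aj_S=j\}$. Applying~\eqref{eq:eta} with $f=\11_D$ on each level set, and noting that $|\eta_S(x)|+\alpha_{S,n}W_S(x)\leq C\,W_S(x)$ is a uniform-in-$n$, $\mu$-integrable dominating function, dominated convergence yields
\[
\theta_{0,S}^{-n}n^{-j}\mu_j S_n\11_D\ \xrightarrow[n\to\infty]{}\ \mu_j(\eta_S).
\]
Summing at the common normalization $n^{-j^*}$, only the $j=j^*$ term survives because $n^{j-j^*}\to 0$ for $j<j^*$, giving $\theta_{0,S}^{-n}n^{-j^*}\mu S_n\11_D\to\mu_{j^*}(\eta_S)$. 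Whether this limit is zero or positive, the definition of $\aj_S$ forces $\aj_S(\mu)\leq j^*$, which together with the Fatou step proves~\eqref{eq:prop12}. For~\eqref{eq:prop12theta}, the same analysis provides the bound $\delta_x S_n\11_D\leq C\,n^{\aj_S(x)}\theta_{0,S}^n W_S(x)$, which integrated yields $\theta_S(\mu)\leq\theta_{0,S}$ and hence equality with $\theta^*:=\sup\{\theta:\mu\{\theta_S\geq\theta\}>0\}$ whenever $\theta^*=\theta_{0,S}$; in the complementary regime $\mu$-a.e.\ $x$ satisfies $\theta_S(x)<\theta_{0,S}$, forcing $\eta_S(x)=0$ and $\aj_S(x)=0$ $\mu$-a.e., so~\eqref{eq:eta} sharpens to $\delta_x S_n\11_D\leq\alpha_{S,n}\theta_{0,S}^n W_S(x)$ and a finer quantitative estimate closes the gap.

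Identity~\eqref{eq:prop13} follows by shifting the index: $\mu S_n\11_D=(\mu S_1)S_{n-1}\11_D$ and $n^{-\ell}/(n-1)^{-\ell}\to 1$ show that $\aj_S(\mu)=\aj_S(\mu S_1)$. Applied at $\mu=\delta_x$ and combined with~\eqref{eq:prop12}, this gives $\aj_S(x)=\sup\{\ell:\PP_x(\aj_S(X_1)\geq\ell,\,1<\tau_\d)>0\}$, forcing $\aj_S(X_1)\leq\aj_S(x)$ $\PP_x$-almost surely on $\{1<\tau_\d\}$; the Markov property then propagates this to the pathwise monotonicity for all $n$. Lower semicontinuity of $\aj_S$ on $\mathcal M_+(W_S)$ is an immediate consequence of~\eqref{eq:prop12}: convergence in $\|\cdot\|_{W_S}$-norm implies total variation convergence (as $W_S\geq 1$), hence $\mu_n\{\aj_S\geq\ell\}\to\mu\{\aj_S\geq\ell\}$ for each integer $\ell$, so any integer $\ell<\aj_S(\mu)$ is eventually $\leq\aj_S(\mu_n)$. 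The main obstacle is the converse inequality in~\eqref{eq:prop12theta} when $\theta^*<\theta_{0,S}$: assumption~\eqref{eq:eta} directly provides information only at the rate $\theta_{0,S}$, and extracting the sharper exponential decay at rate $\theta^*$ requires exploiting the vanishing of $\eta_S$ and $\aj_S$ on the $\mu$-support more carefully.
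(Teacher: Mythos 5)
Most of your argument is sound and, for the parts the paper actually writes out, close to its proof. For \eqref{eq:prop11theta}--\eqref{eq:prop11} you use the same Fatou step (the paper phrases it contrapositively, showing the integrand is $+\infty$ on $\{\aj_S>\aj_S(\mu)+\varepsilon\}$ while the left-hand liminf vanishes, but the content is identical). For \eqref{eq:prop12} you take a genuinely different route: you decompose $\mu$ over the integer level sets of $\aj_S$ and use the full convergence in \eqref{eq:eta} to identify the limit $\mu_{j^*}(\eta_S)$, whereas the paper only uses the one-sided bound $\theta_{0,S}^{-n}n^{-\ell}\delta_xS_n\11_D\le n^{-(\ell-\aj_S(x))}C\,W_S(x)$ together with dominated convergence for an arbitrary real $\ell>\ell_\mu$. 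Both are correct; the paper's version does not use integer-valuedness of $\aj_S$ (a point it makes in a remark), while yours yields the stronger fact that $\theta_{0,S}^{-n}n^{-j^*}\mu S_n\11_D$ actually converges. Your treatment of \eqref{eq:prop13}, of the a.s.\ monotonicity and of lower semi-continuity coincides with the paper's, up to one slip: restricting to integers $\ell<\aj_S(\mu)$ only yields $\liminf_n \aj_S(\mu_n)\ge \aj_S(\mu)-1$; you should instead use that the supremum in \eqref{eq:prop12} is attained at the integer $\ell=\aj_S(\mu)$ (or let $\ell$ range over reals, as the paper does), so that $\mu\{\aj_S\ge \aj_S(\mu)\}>0$ and hence $\mu_n\{\aj_S\ge \aj_S(\mu)\}>0$ eventually.

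The genuine gap is \eqref{eq:prop12theta} in the regime $\theta^*:=\sup\{\theta:\ \mu\{\theta_S\ge\theta\}>0\}<\theta_{0,S}$, which you flag but do not close. Your sharpened bound $\delta_xS_n\11_D\le\alpha_{S,n}\theta_{0,S}^nW_S(x)$ ($\mu$-a.e.) only gives $\theta^{-n}\mu S_n\11_D\le\alpha_{S,n}(\theta_{0,S}/\theta)^n\mu(W_S)$, whose liminf has no reason to vanish for $\theta\in(\theta^*,\theta_{0,S})$ unless $\alpha_{S,n}$ decays geometrically at a rate better than $\theta/\theta_{0,S}$ --- which Assumption~(A) does not provide. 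No ``finer quantitative estimate'' extractable from (A) alone will do: (A) controls the semigroup only at the exponential scale $\theta_{0,S}^n$ and is blind to faster exponential decay, and Fatou goes the wrong way for an upper bound on $\theta_S(\mu)$. Indeed, one can build examples satisfying (A) (a summable mixture of finite ``dead-end'' chains of increasing length, together with one geometric state realizing $\theta_{0,S}$) in which $\theta_S(x)=0$ for every $x$ in the support of $\mu$ while $\theta_S(\mu)>0$, so the inequality $\theta_S(\mu)\le\theta^*$ genuinely fails there. To be fair to you, the paper omits this proof as ``similar'' to that of \eqref{eq:prop12}, and the similar dominated-convergence argument only yields $\theta_S(\mu)\le\theta_{0,S}$, i.e.\ the equality precisely when $\theta^*=\theta_{0,S}$ --- which is the case you do handle correctly. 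Your diagnosis of where the difficulty sits is therefore accurate; just do not claim that the gap closes.
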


%\begin{rem}
%    The proof of this proposition does not use the fact that $j_S$ is assumed integer-valued in Assumption~(A). In particular, this result implies that, under Assumption~(A) without the property that $j_S$ is integer valued, the function $j_S$ is integer-valued over $\mathcal M_+(W_S)$ if and only if $j_S(x)$ in an integer for all $x\in D$.
%    \erem
%\end{rem}

\begin{proof}[Proof of Proposition~\ref{prop:propajs1}]
    We prove~\eqref{eq:prop11}, \eqref{eq:prop12} and~\eqref{eq:prop13} in this order. The proof of~\eqref{eq:prop11theta}, \eqref{eq:prop12theta} are similar and thus omitted.
    
    \medskip\noindent\textit{Proof of~\eqref{eq:prop11}.}
    Fix a positive measure $\mu$ on $D$ (the result is trivial if $\mu=0$). %$\mu\in \mathcal M(W_S)$.
    For all $\varepsilon>0$ and for all $x\in D$ such that $\aj_S(\mu)+\varepsilon<\aj_S(x)$, we have by definition of $\aj_S(x)$ and the fact that $(\aj_S(\mu)+\varepsilon+\aj_S(x))/2<\aj_S(x)$,
    \begin{align*}
    \liminf_{n\to+\infty}  \theta_{0,S}^{-n}n^{-(\aj_S(\mu)+\varepsilon+\aj_S(x))/2}\delta_x S_n\11_D>0
    \end{align*}
    and hence, since $(j_S(\mu)+\varepsilon+j_S(x))/2>j_S(\mu)+\varepsilon$,
    \begin{align*}
    \liminf_{n\to+\infty}  \theta_{0,S}^{-n}n^{-\aj_S(\mu)-\varepsilon}\delta_x S_n\11_D=+\infty.
    \end{align*}
    Using Fatou's Lemma, we obtain
    \begin{align*}
    0=\liminf_{n\to+\infty} \  \theta_{0,S}^{-n}n^{-\aj_S(\mu)-\varepsilon}\mu S_n\11_D
    &\geq \mu\Big(\liminf_{n\to+\infty} \  \theta_{0,S}^{-n}n^{-\aj_S(\mu)-\varepsilon} S_n\11_D\Big)\\
    &\geq \mu\big(+\infty\, \11_{\aj_S(\cdot)>\aj_S(\mu)+\varepsilon}\big).
    \end{align*}
    This implies that, for all $\varepsilon>0$, $\mu\{x,\ \aj_S(x)>\aj_S(\mu)+\varepsilon\}=0$, and hence that $\mu\{x,\ \aj_S(x)>\aj_S(\mu)\}=0$.
    In particular, any $\ell\geq 0$ such that $\mu\{x,\aj_S(x)\geq \ell\}>0$ satisfies $\ell\leq \aj_S(\mu)$. We thus proved that
    \begin{align}
    \label{eq:ajsbigger}
    \aj_S(\mu)\geq \ell_\mu:=\sup\Big\{\ell\geq 0,\ \mu\{x,\ \aj_S(x)\geq \ell\}>0\Big\}.
    \end{align}

    \medskip\noindent\textit{Proof of~\eqref{eq:prop12} and the fact that $j_S$ is lower semi-continuous.} We assume that $\mu\in
    \mathcal M_+(W_S)$ is a positive measure and that Assumption~(A) holds true, and we prove $\aj_S(\mu)\leq \ell_\mu$, where
    $\ell_\mu$ is defined in~\eqref{eq:ajsbigger}. Fix
    $\ell>\ell_\mu$, so %. Then % we have $\mu\{x,\ \aj_S(x)\geq \ell\}=0$ and hence
    $\aj_S(x)<\ell$ $\mu(\mathrm dx)$-almost everywhere.     
    Then, by Assumption~(A),
    \begin{align*}
    \left|\theta_{0,S}^{-n}n^{-\ell}\delta_xS_n\11_D\right|
    &\leq n^{-(\ell-\aj_S(x))}\left(\alpha_{S,n}W_S(x)+\eta_S(x)\right)\\
    &\leq n^{-(\ell-\aj_S(x))} C\,W_S(x)\\
    &\xrightarrow[n\to+\infty]{\text{$\mu(\mathrm dx)-$a.e.}} 0
    \end{align*}
    for some constant $C>0$. This also implies that $\left|\theta_{0,S}^{-n}n^{-\ell}\delta_xS_n\11_D\right|$ is bounded, up to a multiplicative constant, by the $\mu$-integrable function $W_S$, % (since $\mu\in\mathcal M_+(W_S)$),
    and hence, by the Lebesgue dominated convergence theorem, we obtain
    \begin{align*}
    \lim_{n\to+\infty} \theta_{0,S}^{-n}n^{-\ell}\mu S_n\11_D=0.
    \end{align*}
    This entails that $\ell\geq \aj_S(\mu)$. Since $\ell>\ell_\mu$ was arbitrary, we deduce that $\ell_\mu\geq \aj_S(\mu)$. This concludes the proof of~\eqref{eq:prop12}.
    
    Now let $(\mu_n)_{n\in\NN}$ be a sequence of elements of $\mathcal M_+(W_S)$ converging toward $\mu$ in $\mathcal M_+(W_S)$. Then for all measurable subset $A\subset D$, we have $\mu_n(A)\to \mu(A)$ and hence, for all $\ell\geq 0$ such that $\mu\{x,\ j_S(x)\geq \ell\}>0$, 
    \[
    \liminf_{n\geq +\infty} \mu_n\{x,\ j_S(x)\geq \ell\}>0,
    \]
    so that
    \[
    \liminf_{n\geq +\infty} j_S(\mu_n)\geq \ell.
    \]
    This holds true for all $\ell<j_S(\mu)$, so
    \[
    \liminf_{n\geq +\infty} j_S(\mu_n)\geq j_S(\mu),
    \]
    which concludes the proof of the fact that $j_S$ is lower semi-continuous.

    \medskip\noindent\textit{Proof of~\eqref{eq:prop13} and that $j_S(X_n)$ is a.s.\ non-increasing.} We still assume that $\mu\in  \mathcal M_+(W_S)$ and that Assumption~(A) holds true. Let us first prove that $\aj_S(\mu)=\aj_S(\mu S_1)$. We have, for all $\ell\geq 0$,
    \begin{align*}
    \theta_{0,S}^{-n} n^{-\ell}(\mu S_1)S_n&=\left(\frac{n}{n+1}\right)^\ell\theta_{0,S}\,\theta_{0,S}^{-(n+1)}(n+1)^{-\ell}\mu S_{n+1}\\
    &\sim_{n\to+\infty} \theta_{0,S}\,\theta_{0,S}^{-(n+1)}(n+1)^{-\ell}\mu S_{n+1}.
    \end{align*}
    This implies that the $\liminf$ of $\theta_0^{-n} n^{-\ell}(\mu S_1)S_n$ equals $0$ if and only if the $\liminf$ of $\theta_{0,S}^{-n}\,n^{-\ell}\mu S_{n}$ equals $0$, and hence
    that $\aj_S(\mu S_1)=\aj_S(\mu)$.
    
    We conclude by proving the last assertion of the proposition. We have
    \[
    \aj_S(\mu S_1)=\sup\Big\{\ell\geq 0,\ \mu S_1\{x,\ \aj_S(x)\geq \ell\}>0\Big\},
    \]
    hence
    \[
    \mu S_1\{x,\ \aj_S(x)>\aj_S(\mu S_1)\}=0.
    \]
    Using the equality  $\aj_S(\mu S_1)=\aj_S(\mu)$ and the fact that $\mu S_1=\P_\mu(X_1\in\cdot,\ X_1\neq\d)$, we deduce that
    \[
    \P_\mu(\aj_S(X_1)>\aj_S(\mu))=0,
    \]
    where we used $j_S(\d)=0$ due to our notational convention about extension of functions.
    This and a straightforward application of the Markov property concludes the proof of the proposition.
\end{proof}

We now turn our attention to the implications of Assumption~(A) for quasi-stationary distributions. 
The following proposition considers quasi-stationary distributions $\nu\in \mathcal M_+(W_S)$ such that $\nu(\eta_S)>0$.

\begin{prop}
    \label{prop:QSDbisrec}
    Assume that Assumption~(A) holds true and let $\nu\in \mathcal M_+(W_S)$ be a quasi-stationary distribution  such that
    $\nu(\eta_S)>0$ and such that $\nu\{j_S(\cdot)\leq \ell\}=1$ for some $\ell\geq 0$. Then the exponential absorption parameter of $\nu$ is $\theta_{0,S}$.
\end{prop}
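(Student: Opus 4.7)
The plan is to sandwich the exponential absorption parameter of $\nu$ around $\theta_{0,S}$, using $\nu(\eta_S)>0$ for the lower bound and $\nu\{j_S(\cdot)\leq\ell\}=1$ for the upper bound. To begin, I would observe that for any QSD $\nu$ the map $n\mapsto \nu S_n\11_D$ is multiplicative (a direct consequence of the identity $\nu S_n=(\nu S_n\11_D)\,\nu$ coming from the definition of a QSD), so $\nu S_n\11_D=\theta^n$ with $\theta=\nu S_1\11_D$, and this $\theta$ is by definition the exponential absorption parameter of $\nu$. A direct inspection of the definition of $\theta_S(\nu)$ then shows $\theta_S(\nu)=\theta$, so the task reduces to proving $\theta_S(\nu)=\theta_{0,S}$.

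For the inequality $\theta_S(\nu)\geq\theta_{0,S}$, I would use the observation already noted after~\eqref{eq:defeatS}: if $\theta_S(x)<\theta_{0,S}$, then $\theta_{0,S}^{-n} n^{-j_S(x)}\delta_x S_n\11_D\to 0$, which combined with~\eqref{eq:eta} applied to $f=\11_D$ forces $\eta_{S,i}(x)=0$ for every $i\in I_S$, hence $\eta_S(x)=0$. The hypothesis $\nu(\eta_S)>0$ therefore implies $\nu\{x:\theta_S(x)\geq\theta_{0,S}\}>0$, and since $\nu\in\mathcal M_+(W_S)$, formula~\eqref{eq:prop12theta} of Proposition~\ref{prop:propajs1} yields $\theta_S(\nu)\geq\theta_{0,S}$.

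For the reverse inequality $\theta_S(\nu)\leq\theta_{0,S}$, I would apply~\eqref{eq:eta} with $f=\11_D\in L^\infty(W_S)$ to obtain, for all $x\in D$ and $n\geq 1$,
\[
\delta_x S_n\11_D\leq \theta_{0,S}^n\, n^{j_S(x)}\bigl(\eta_S(x)+\alpha_{S,n}W_S(x)\bigr),
\]
and integrate against $\nu$. The hypothesis $\nu\{j_S(\cdot)\leq\ell\}=1$ upgrades the factor $n^{j_S(x)}$ to the uniform bound $n^\ell$ on the support of $\nu$, while $\nu\in\mathcal M_+(W_S)$ together with $\eta_S\in L^\infty(W_S)$ make both $\nu(\eta_S)$ and $\nu(W_S)$ finite. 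One obtains $\nu S_n\11_D\leq C\,n^\ell \theta_{0,S}^n$ for some constant $C$ and all $n\geq 1$, which forces $\theta_S(\nu)\leq\theta_{0,S}$. Combining the two bounds gives $\theta_S(\nu)=\theta_{0,S}$. No step presents a serious obstacle: the main conceptual input is the dichotomy ``$\eta_S(x)=0$ or $\theta_S(x)=\theta_{0,S}$'' already encoded in Assumption~(A), which both makes $\nu(\eta_S)>0$ the natural non-degeneracy hypothesis and explains why the uniform bound $j_S\leq\ell$ $\nu$-a.s.\ is what is needed to control the integration step in the upper bound.
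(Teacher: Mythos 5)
Your proposal is correct and follows essentially the same strategy as the paper: the inequality $\theta_\nu\leq\theta_{0,S}$ is obtained by the same integration of the bound from~\eqref{eq:eta} (with $f=\11_D$) against $\nu$, using $\nu\{j_S(\cdot)\leq\ell\}=1$ and $\nu(W_S)<+\infty$, exactly as in the paper's argument by contradiction. For the reverse inequality you invoke~\eqref{eq:prop12theta} together with the observation that $\eta_S(x)>0$ forces $\theta_S(x)=\theta_{0,S}$, whereas the paper applies Fatou's lemma directly to $\theta_\nu^{-n}n^{-j_S(\cdot)}\EE_\cdot(\11_D(X_n)\11_{n<\tau_\d})$ — a cosmetic difference, since the same Fatou mechanism underlies the (omitted) proof of~\eqref{eq:prop12theta}.
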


\begin{proof}
    According to~\eqref{eq:eta}, we have for all $x\in D$  
    \begin{align}
    \label{eq:equation1}
    \theta_{0,S}^{-n} n^{-\aj_S(x)} \EE_x(\11_D(X_n)\11_{n< \tau_\d})\xrightarrow[n\to+\infty]{} \eta_S(x),
    \end{align}
    and
    \begin{align}
    \label{eq:equation2}
    \left|\theta_{0,S}^{-n} n^{-\aj_S(x)} \EE_x(\11_D(X_n)\11_{n< \tau_\d})\right|&\leq |\eta_S(x)|+\alpha_{S,n} |W_S(x)|\, \|\11_D\|_{W_S}\leq C W_S(x).
    \end{align}
    Denote by $\theta_\nu$ the absorption parameter of $\nu$. Assume that $\theta_\nu>\theta_{0,S}$, then, for any $\ell\geq 1$ such that $\nu\{\aj_S(\cdot)\leq \ell\}=1$, and $n\geq 1$ large enough so that $\theta_\nu^{-n}\leq \theta_{0,S}^{-n}n^{-\ell-1}$,
    \begin{align*}
    \nu(D)%=\nu(\11_{\aj_S(\cdot)\leq \ell})
    &=\theta_{\nu}^{-n} \EE_\nu(\11_D(X_n)\11_{n< \tau_\d})\\
   &\leq n^{-1}\nu\left(\theta_{0,S}^{-n} n^{-\aj_S(\cdot)} \EE_\cdot(\11_D(X_n)\11_{n< \tau_\d})\right)\xrightarrow[n\to+\infty]{} 0,
    \end{align*}
    using~\eqref{eq:equation2}. This is a contradiction and hence $\theta_\nu\leq \theta_{0,S}$.
    
    Assume now that $\theta_\nu<\theta_{0,S}$, then  Fatou's lemma entails that
    \begin{align*}
    1&=\nu(\11_D)=
    \theta_{\nu}^{-n} \EE_\nu(\11_D(X_n)\11_{n< \tau_\d})\\
    &\geq\nu\left(\theta_{\nu}^{-n} n^{-\aj_S(\cdot)} \EE_\cdot(\11_D(X_n)\11_{n< \tau_\d})\right)\xrightarrow[n\to+\infty]{}   +\infty
    \end{align*}
    by~\eqref{eq:equation1} and since $\nu(\eta_S)>0$. Hence, we have proved that $\theta_\nu=\theta_{0,S}$.
\end{proof}

 The following proposition shows  that all quasi-stationary distributions in $\mathcal M_+(W_S)$  with parameter~$\theta_{0,S}$ are convex combinations of the $\nu_{S,i}$.

\begin{prop}
    \label{prop:QSDbis} Assume that there exists a QSD $\nu$ with exponential absorption parameter $\theta_{0,S}$. Then
    \[
    \aj_S(\nu)=0\quad\text{and}\quad\nu\left\{\aj_S(\cdot)>0\right\}=0.
    \]
    If in addition Assumption~(A) holds true and $\nu\in \mathcal M_+(W_S)$, then $\nu(\eta_S)=1$ and $\nu=\sum_{i\in I_S}\nu(\eta_{S,i})\nu_{S,i}$. 
\end{prop}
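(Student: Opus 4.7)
The plan is to split the proposition into two independent arguments: the first conclusion rests only on the QSD identity and the lower bound~\eqref{eq:prop11} (Assumption~(A) is not needed), while the second relies on integrating the approximation~\eqref{eq:eta} against $\nu$ and then re-using the QSD identity to identify the limit.

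For the first part, I would iterate the defining identity of a QSD to get $\nu S_n=\theta_{0,S}^n\nu$, hence $\theta_{0,S}^{-n}\nu S_n\11_D\equiv 1$. Plugging this constant into~\eqref{eq:def-j_S} immediately yields $j_S(\nu)=0$. The equality $\nu\{j_S(\cdot)>0\}=0$ then follows by contradiction: otherwise, writing $\{j_S>0\}$ as the increasing union $\bigcup_{k\geq 1}\{j_S\geq 1/k\}$, some level set $\{j_S\geq\varepsilon\}$ with $\varepsilon>0$ would have positive $\nu$-mass, and~\eqref{eq:prop11} would force $j_S(\nu)\geq\varepsilon$, contradicting $j_S(\nu)=0$.

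For the second part, I assume Assumption~(A) and $\nu\in\mathcal M_+(W_S)$. By the first step, $j_S(x)=0$ for $\nu$-a.e.\ $x$, so the factor $n^{-j_S(x)}$ disappears from~\eqref{eq:eta} at those points. Integrating the resulting inequality against $\nu$, the error term $\alpha_{S,n}\nu(W_S)\|f\|_{W_S}$ tends to zero since $\nu(W_S)<+\infty$, yielding
\[
\theta_{0,S}^{-n}\nu S_n(f)\xrightarrow[n\to+\infty]{}\sum_{i\in I_S}\nu(\eta_{S,i})\,\nu_{S,i}(f),\qquad f\in L^\infty(W_S).
\]
Substituting the QSD identity $\theta_{0,S}^{-n}\nu S_n=\nu$ on the left-hand side produces the decomposition $\nu=\sum_{i\in I_S}\nu(\eta_{S,i})\nu_{S,i}$, and specialising to $f=\11_D$ (using $\eta_S=\sum_i\eta_{S,i}$ and monotone convergence) yields the normalisation $\nu(\eta_S)=1$.

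I do not anticipate any serious obstacle. The only step requiring mild care is the interchange of $\sum_{i\in I_S}$ and $\int\cdot\,\mathrm d\nu$, which is legitimate because the series $\sum_i\eta_{S,i}(x)\nu_{S,i}(f)$ is dominated by $\|f\|_{W_S}\sum_i\eta_{S,i}(x)\nu_{S,i}(W_S)$, and by~\eqref{eq:etanu} this latter function lies in $L^\infty(W_S)\subset L^1(\nu)$.
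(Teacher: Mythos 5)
Your proof is correct and follows essentially the same route as the paper: $j_S(\nu)=0$ from the QSD identity $\nu S_n=\theta_{0,S}^n\nu$, the null-set statement from~\eqref{eq:prop11}, and the decomposition by integrating~\eqref{eq:eta} against $\nu$ on the set $\{j_S=0\}$. The only (harmless) difference is the order in the second part: the paper first establishes $\nu(\eta_S)=1$ by dominated convergence with $f=\11_D$ and then derives the decomposition, whereas you obtain the decomposition first and read off $\nu(\eta_S)=1$ by taking $f=\11_D$ and using that the $\nu_{S,i}$ are probability measures; your remark justifying the interchange of $\sum_{i\in I_S}$ and $\int\cdot\,\dd\nu$ via~\eqref{eq:etanu} is the right point to flag.
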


\begin{proof}
    The property $\aj_S(\nu)=0$ is immediate, while the second equality derives immediately from~\eqref{eq:prop11} in Proposition~\ref{prop:propajs1}.
    
    If in addition Assumption~(A) holds true and $\nu\in \mathcal M_+(W_S)$, then~\eqref{eq:equation1} and~\eqref{eq:equation2}
    are satisfied.
% we have, according to~\eqref{eq:eta}, for all $x\in D$,
%     \begin{align*}
%     \theta_{0,S}^{-n} n^{-\aj_S(x)} \EE_x(\11_D(X_n)\11_{n< \tau_\d})\xrightarrow[n\to+\infty]{} \eta_S(x),
%     \end{align*}
%     and
%     \begin{align*}
%     \left|\theta_{0,S}^{-n} n^{-\aj_S(x)} \EE_x(\11_D(X_n)\11_{n< \tau_\d})\right|&\leq |\eta_S(x)|+\alpha_{S,n} |W_S(x)|\, \|\11_D\|_{W_S}\leq C W_S(x).
%     \end{align*}
    Using the fact that $\nu(W_S)<+\infty$, we deduce from Lebesgue's dominated convergence theorem that
    \begin{align*}
    1&=\nu(\11_D)=
    \theta_{0,S}^{-n} \EE_\nu(\11_D(X_n)\11_{n< \tau_\d})\\
    &=\nu\left(\theta_{0,S}^{-n} n^{-\aj_S(\cdot)} \EE_\cdot(\11_D(X_n)\11_{n< \tau_\d})\right)\xrightarrow[n\to+\infty]{} \nu(\eta_S),
    \end{align*}
    which shows that $\nu(\eta_S)=1$. 
    
    Finally, integrating~\eqref{eq:eta} with respect to $\nu$ and using the fact that $j_S(x)=0$ $\nu(\mathrm dx)$-almost surely, we deduce that, for all $f\in L^\infty(W_S)$,
    \begin{align*}
    \left|\theta_{0,S}^{-n}  \EE_\nu(f(X_n)\11_{n< \tau_\d})-\sum_{i\in I_S} \nu(\eta_{S,i})\nu_{S,i}(f)\right|\leq \alpha_{S,n} \nu(W_S)\,\|f\|_{W_S}.
    \end{align*}
    Since $\theta_{0,S}^{-n}\,\EE_\nu(f(X_n)\11_{n< \tau_\d})=\nu(f)$ and $\nu(W_S)<+\infty$, we deduce that % $\nu(f)=\sum_{i\in I_S}
    % \nu(\eta_{S,i})\nu_{S,i}(f)$ for all $f\in L^\infty(W_S)$, and hence
    $\nu=\sum_{i\in I_S}\nu(\eta_{S,i})\nu_{S,i}$. This concludes the proof of the proposition.
\end{proof}

In Assumption~(A), the $\nu_{S,i}$ do not need to be quasi-stationary distributions. However, we will see in the results of
Section~\ref{sec:finite-classes} that this is typically the case if the set $I_S$ and the measures $\nu_{S,i}$ are defined correctly.
In the following corollary, we consider a special situation where this holds true.

\begin{cor}
    \label{cor:corQSD}
        Assume that Assumption~(A) holds true  and that there exists a measurable partition $D=N\cup\left(\bigcup_{i\in I_S} M_i\right)$ such that for all $i\in I_S$, and all $x\in M_i$, we have $\nu_{S,i}(M_i)=1$, $j_S(x)=0$, $\eta_{S,i}(x)>0$, and, for all        
        $i\neq j\in I_S$ and $y\in M_j$, we have $\eta_{S,i}(y)=0$. Then the quasi-stationary distributions in $\mathcal M_+(W_S)$ with absorption parameter $\theta_{0,S}$ are exactly the convex combinations of the probability measures $\nu_{S,i}$. Similarly, the quasi-stationary distributions $\nu$ in $\mathcal M_+(W_S)$ such that $\nu(D\setminus N)>0$ are exactly the convex combinations of the probability measures $\nu_{S,i}$.
\end{cor}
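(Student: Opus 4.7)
The strategy is to combine Propositions~\ref{prop:propajs1}, \ref{prop:QSDbisrec} and~\ref{prop:QSDbis} with a direct use of the convergence~\eqref{eq:eta} on each $M_i$. The heart of the argument is to check that each $\nu_{S,i}$ is itself a QSD with exponential absorption parameter $\theta_{0,S}$; once this is done, the corollary reduces quickly to Proposition~\ref{prop:QSDbis}.

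To see that $\nu_{S,i}$ is a QSD with parameter $\theta_{0,S}$, fix $i\in I_S$ and pick $x\in M_i$. The hypotheses give $j_S(x)=0$, $\eta_{S,i}(x)>0$ and $\eta_{S,j}(x)=0$ for all $j\neq i$, so~\eqref{eq:eta} simplifies to
\[
\left|\theta_{0,S}^{-n}\,\EE_x(f(X_n)\11_{n<\tau_\d})-\eta_{S,i}(x)\,\nu_{S,i}(f)\right|\leq \alpha_{S,n}\,W_S(x)\,\|f\|_{W_S}.
\]
Dividing by this inequality evaluated at $f=\11_D$ (eventually positive since $\eta_{S,i}(x)>0$) yields the total-variation convergence $\PP_x(X_n\in\cdot\mid n<\tau_\d)\to\nu_{S,i}$, so $\nu_{S,i}$ is a quasi-limiting distribution and hence a QSD. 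To identify its absorption parameter, I apply Proposition~\ref{prop:QSDbisrec} with $\ell=0$: since $\nu_{S,i}(M_i)=1$ and since $j_S\equiv 0$, $\eta_{S,i}>0$ on $M_i$, both $\nu_{S,i}\{j_S\leq 0\}=1$ and $\nu_{S,i}(\eta_S)\geq\nu_{S,i}(\eta_{S,i})>0$ hold, so the parameter of $\nu_{S,i}$ equals $\theta_{0,S}$.

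For the first assertion, the forward direction is then exactly the representation $\nu=\sum_{i\in I_S}\nu(\eta_{S,i})\nu_{S,i}$ provided by Proposition~\ref{prop:QSDbis}, whose coefficients sum to $\nu(\eta_S)=1$; the backward direction follows because any convex combination of QSDs sharing the common parameter $\theta_{0,S}$ inherits both the identity $(\sum_i\lambda_i\nu_{S,i})S_1=\theta_{0,S}\sum_i\lambda_i\nu_{S,i}$ and (under the implicit $W_S$-integrability) membership in $\mathcal M_+(W_S)$. For the second assertion, the ``$\Leftarrow$'' direction is immediate since $\nu_{S,i}(D\setminus N)\geq\nu_{S,i}(M_i)=1$. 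For the converse, I reduce to the first assertion by showing that any QSD $\nu\in\mathcal M_+(W_S)$ with $\nu(D\setminus N)>0$ has absorption parameter $\theta_{0,S}$: applying~\eqref{eq:eta} to $\11_D$ at any $x\in M_i$ gives $\theta_{0,S}^{-n}\delta_x S_n\11_D\to\eta_{S,i}(x)>0$, so $\theta_S(x)=\theta_{0,S}$ on all of $\bigcup_i M_i=D\setminus N$; the hypothesis $\nu(D\setminus N)>0$ then yields $\nu\{\theta_S(\cdot)\geq\theta_{0,S}\}>0$, whence~\eqref{eq:prop12theta} produces $\theta_S(\nu)\geq\theta_{0,S}$. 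The reverse inequality is also a consequence of~\eqref{eq:prop12theta} since $\theta_S\leq\theta_{0,S}$ everywhere, and $\nu S_n\11_D=\theta_\nu^n$ gives $\theta_S(\nu)=\theta_\nu$; we conclude $\theta_\nu=\theta_{0,S}$ and the first assertion finishes the proof.

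The only (mild) obstacle I foresee is this last step: translating the ``charges $D\setminus N$'' hypothesis into a lower bound on the absorption parameter of $\nu$. The key observation is that the strict positivity of $\eta_{S,i}$ on $M_i$ upgrades the convergence~\eqref{eq:eta} into the pointwise equality $\theta_S(x)=\theta_{0,S}$ on $D\setminus N$, after which Proposition~\ref{prop:propajs1} transfers this pointwise information to $\nu$ via~\eqref{eq:prop12theta}.
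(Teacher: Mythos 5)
Your proof is correct and follows essentially the same route as the paper: identify each $\nu_{S,i}$ as a quasi-limiting distribution via~\eqref{eq:eta} at points of $M_i$, pin down its absorption parameter with Proposition~\ref{prop:QSDbisrec}, invoke Proposition~\ref{prop:QSDbis} for the forward inclusion, and handle convex combinations by linearity. The only (harmless) divergence is in the second assertion, where you derive $\theta_\nu=\theta_{0,S}$ from the pointwise identity $\theta_S\equiv\theta_{0,S}$ on $D\setminus N$ combined with~\eqref{eq:prop12theta}, whereas the paper deduces $\nu(\eta_S)>0$ and applies Proposition~\ref{prop:QSDbisrec} once more; both arguments work.
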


\begin{rem}
    Observe that the set $N$ is $\sum_{i\in I_S}\nu_{S,i}$-negligible, but is typically non-empty since it contains all the points
    with $j_S>0$ (as we will see in the next sections, $j_S$ may not be identically zero in reducible state spaces).
    \erem
\end{rem}

\begin{proof}
    Proposition~\ref{prop:QSDbis} entails that any quasi-stationary distribution in $\mathcal M_+(W_S)$ with absorption parameter $\theta_{0,S}$ is a convex combination of the probability measures $\nu_{S,i}$. Reciprocally, for any $x\in M_i$, we have $\eta_S(x)=\eta_{S,i}(x)>0$ and hence, according to~\eqref{eq:eta},
    \[
    \frac{1}{\eta_{S}(x)}\sum_{i\in I_S}\eta_{S,i}(x)\nu_{S,i}=\nu_{S,i}
    \]
    is the limit  when $n\to+\infty$ of the conditional distribution $\mathbb P_x(X_n\in\cdot\mid n<\tau_\d)$, hence it is a quasi-limiting distribution and thus a  quasi-stationary distribution in $\mathcal M_+(W_S)$. Moreover $\nu_{S,i}$ satisfies $\nu_{S,i}(M_i)=1$ and, since $\eta_{S,i}>0$ on $M_i$,  $\nu_{S,i}(\eta_S)>0$. Proposition~\ref{prop:QSDbisrec} entails that the absorption parameter of $\nu_{S,i}$ is $\theta_{0,S}$. In particular, for any convex combination $\nu=\sum_{i\in I_S}\lambda_i \nu_{S,i}$, we have $\nu\in\mathcal M_+(W_S)$ and
    \begin{align*}
    \mathbb P_\nu(X_n\in\cdot)&=\sum_{i\in I_S}\lambda_i \mathbb P_{\nu_{S,i}}(X_n\in\cdot)=\sum_{i\in I_S}\lambda_i \theta_{0,S}^n \nu_{S,i}= \theta_{0,S}^n \nu,
    \end{align*}
    which implies that $\nu$ is a quasi-stationary distribution with absorption parameter $\theta_{0,S}$.
    
    To conclude the proof, we simply observe that any quasi-stationary distribution $\nu$ in $\mathcal M_+(W_S)$ such that $\nu(D\setminus N)>0$ satisfies $\nu(\eta_S)>0$ and hence, according to Proposition~\ref{prop:QSDbisrec}, its absorption parameter is $\theta_{0,S}$.
\end{proof}

We conclude this section with properties on the measures % quasi-stationary distribution $\nu_S$
$\sum_i \eta_{S,i}(x) \nu_{S,i}$ and on $\eta_S$. 

\begin{prop}
    \label{prop:mainbis}
    Under Assumption (A):
    \begin{enumerate}[(i)]
        \item For all $x_*\in D$ such that $\eta_S(x_*)>0$,  $\frac{1}{\eta_{S}(x_*)}\sum_{i\in I_S}\eta_{S,i}(x_*)\nu_{S,i}$ is a
          quasi-stationary distribution with absorption parameter $\theta_{0,S}$. In addition, there exists $x\in D$ such that $\eta_S(x)>0$ and $j_S(x)=0$.
        \item The function $\eta_S$ satisfies, for all all $x\in D$,
        \begin{align*}
        \label{eq:prop-poly-2a}
        \EE_x \left[\eta_S(X_n)\11_{\aj_S(X_n)=\aj_S(x)}\right]=\theta_{0,S}^n\eta_S(x).
        \end{align*} 
        \item  For all $n\geq 0$ and all positive measure $\mu\in{\cal M}_+(W_S)$ such that $\mu(\eta_S)>0$  and $\mu(n^{j_S(\cdot)}W_S)<+\infty$, we have
         \begin{equation}
        \label{eq:prop-poly-2b}
        \left\| \PP_\mu\left(X_n\in\cdot\mid n<\tau_\d\right)-\frac{\sum_{i\in I_S}\mu\left(n^{\aj_S(\cdot)}\eta_{S,i}\right)\nu_{S,i}}{\sum_{i\in I_S}\mu\left(n^{\aj_S(\cdot)}\eta_{S,i}\right)}\right\|_{TV}\leq
        2\alpha_{S,n}\frac{\mu\left(n^{\aj_S(\cdot)}W_S\right)}{\mu\left(n^{\aj_S(\cdot)}\eta_S\right)}.
        \end{equation}
        \item  For all measure $\mu\in{\cal M}(W_S)$ and all $f\in L^\infty(W_S)$, we have
        \begin{multline}
        \label{eq:prop-poly-2c}
        \left|\theta_{0,S}^{-n}n^{-j_S(|\mu|)}\EE_\mu\left(f(X_n)\11_{n<\tau_\d}\right)-\sum_{i\in I_S}\mu(\11_{\aj_S(\cdot)=\aj_S(|\mu|)}\eta_{S,i})\nu_{S,i}(f)\right|\\
        \leq \left(\alpha_{S,n}+\frac{1}{n}\left\|\sum_{i\in I_S}\eta_{S,i}\nu_{S,i}(W_S)\right\|_{W_S}\11_{j_S(|\mu|)\geq 1}\right)\,|\mu|\left(W_S\right)\|f\|_{W_S},
        \end{multline}
    for all $n\geq 1$.
    \end{enumerate}
\end{prop}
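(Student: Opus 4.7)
I would order the proof as (iv), (iii), (ii), (i): parts (iv) and (iii) are direct integrations of~\eqref{eq:eta}, (ii) requires a limit argument inside the semigroup, and the first claim of (i) follows from~\eqref{eq:eta} while its second claim combines that first claim with Proposition~\ref{prop:QSDbis}. For (iv), I would start from~\eqref{eq:eta}, multiply both sides by $n^{j_S(x)-j_S(|\mu|)}$ and integrate against $\mathrm d\mu$. The crucial input from Proposition~\ref{prop:propajs1} is that $j_S(x)\leq j_S(|\mu|)$ for $|\mu|$-a.e.\ $x$, so this factor is at most $1$ on the support of $|\mu|$, and at most $1/n$ on $\{j_S(x)<j_S(|\mu|)\}$ because $j_S$ is integer valued. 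Splitting the principal term at $\{j_S(x)=j_S(|\mu|)\}$ versus $\{j_S(x)<j_S(|\mu|)\}$ and controlling the second piece via~\eqref{eq:etanu}, which bounds $\big|\sum_i\eta_{S,i}(x)\nu_{S,i}(f)\big|$ by $W_S(x)\|f\|_{W_S}\big\|\sum_i\eta_{S,i}\nu_{S,i}(W_S)\big\|_{W_S}$, produces the two error terms appearing in (iv).

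For (iii), since $\mu\geq 0$, I would instead multiply~\eqref{eq:eta} by $n^{j_S(x)}$ and integrate, yielding
\[
\left|\theta_{0,S}^{-n}\mu S_n f - \sum_{i\in I_S}\mu(n^{j_S(\cdot)}\eta_{S,i})\nu_{S,i}(f)\right|\leq \alpha_{S,n}\mu(n^{j_S(\cdot)}W_S)\|f\|_{W_S},
\]
together with the analogous bound for $f=\11_D$. Setting $a_n=\theta_{0,S}^{-n}\mu S_n f$, $b_n=\theta_{0,S}^{-n}\mu S_n\11_D$, $a=\sum_i\mu(n^{j_S(\cdot)}\eta_{S,i})\nu_{S,i}(f)$ and $b=\mu(n^{j_S(\cdot)}\eta_S)$, the decomposition $a_n b-a b_n=a_n(b-b_n)+b_n(a_n-a)$ combined with $|a_n|\leq b_n\|f\|_\infty$ (which holds since $\mu\geq 0$) gives $|a_n/b_n-a/b|\leq (|a_n-a|+|b_n-b|)/b$; the factor $2$ in (iii) appears after taking the supremum over $\|f\|_\infty\leq 1$.

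For (ii), I would apply the Markov property in the form $S_{n+m}\11_D(x)=\EE_x[S_m\11_D(X_n)\11_{n<\tau_\d}]$, substitute the expansion $S_m\11_D(y)=\theta_{0,S}^m m^{j_S(y)}(\eta_S(y)+r_m(y))$ from~\eqref{eq:eta} (with $|r_m|\leq \alpha_{S,m}W_S$), and divide by $\theta_{0,S}^m m^{j_S(x)}$. The left-hand side $\theta_{0,S}^{-m}m^{-j_S(x)}S_{n+m}\11_D(x)$ tends to $\theta_{0,S}^n\eta_S(x)$ as $m\to\infty$ by~\eqref{eq:eta} applied at step $n+m$, together with $((n+m)/m)^{j_S(x)}\to 1$. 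Inside the expectation, the factor $m^{j_S(X_n)-j_S(x)}$ converges $\PP_x$-almost surely to $\11_{j_S(X_n)=j_S(x)}$ by the almost-sure monotonicity of $n\mapsto j_S(X_n)$ established in Proposition~\ref{prop:propajs1}; the integrand is dominated by a constant multiple of $W_S(X_n)\11_{n<\tau_\d}$, whose $\PP_x$-expectation is finite by~\eqref{eq:eta} applied with $f=W_S$. Dominated convergence then delivers (ii).

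For the first part of (i), I would fix $x$ with $\eta_S(x)>0$; taking the ratio of~\eqref{eq:eta} applied to $f$ and to $\11_D$ shows that $\PP_x(X_n\in\cdot\mid n<\tau_\d)$ converges in $\mathcal M(W_S)$ to $\nu_x:=\eta_S(x)^{-1}\sum_i\eta_{S,i}(x)\nu_{S,i}$, so $\nu_x$ is a quasi-limiting distribution, hence a QSD. Its absorption parameter is $\theta_{0,S}$ by the Markov identity $\nu_x(S_m f)=\lim_n\frac{\PP_x(n+m<\tau_\d)}{\PP_x(n<\tau_\d)}\EE_x[f(X_{n+m})\mid n+m<\tau_\d]=\theta_{0,S}^m\nu_x(f)$, the survival ratio tending to $\theta_{0,S}^m$ by~\eqref{eq:eta} (the polynomial factors cancel). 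For the second part, since each $\eta_{S,i}$ is non-identically zero, $\eta_S$ is positive somewhere; for such $x$, $\nu_x\in\mathcal M_+(W_S)$ by~\eqref{eq:etanu}, is a QSD with parameter $\theta_{0,S}$ by what precedes, and Proposition~\ref{prop:QSDbis} then yields $\nu_x(\eta_S)=1$ and $\nu_x\{j_S>0\}=0$; consequently $\{j_S=0\}\cap\{\eta_S>0\}$ has positive $\nu_x$-measure and is in particular non-empty. The main technical subtlety lies in (ii), where the interchange of limit and $\PP_x$-expectation relies on the monotonicity of $j_S(X_n)$ from Proposition~\ref{prop:propajs1}; the other parts reduce to careful bookkeeping once the $n^{j_S(\cdot)}$ factor is handled.
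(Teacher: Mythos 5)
Your proof is correct, and parts (iii) and (iv) essentially reproduce the paper's computations (your direct splitting of the integral in (iv) over $\{j_S(\cdot)=j_S(|\mu|)\}$ versus $\{j_S(\cdot)<j_S(|\mu|)\}$, with the $1/n$ gain coming from integrality of $j_S$, is exactly the content of Lemma~\ref{lem:technicbis}, which the paper then invokes for (iv)). Where you genuinely diverge is in (i) and (ii): the paper routes both through the appendix, applying Proposition~\ref{prop:fp-qsd-poly} to the operator $\tS\mu=\theta_{0,S}^{-1}\mu S_1$ on the Banach space $G_{\ell}$ to obtain the exact algebraic identities $E_\tS\tS=\tS E_\tS=E_\tS$; these give at one stroke the one-step invariance $\nu_x S_1=\theta_{0,S}\nu_x$ (hence (i) without any limiting argument) and the one-step relation $\E_x[\eta_S(X_1)\11_{j_S(X_1)=j_S(x)}]=\theta_{0,S}\eta_S(x)$, after which (ii) follows by induction via the Markov property and the a.s.\ monotonicity of $j_S(X_n)$. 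You instead prove the $n$-step identity in (ii) directly by letting $m\to\infty$ in $S_{n+m}\11_D(x)=\E_x[\11_{n<\tau_\d}S_m\11_D(X_n)]$ with dominated convergence, and you obtain the absorption parameter in (i) from the ratio-limit of survival probabilities; both arguments are sound (the domination by $W_S(X_n)\11_{n<\tau_\d}$ is justified by~\eqref{eq:eta} with $f=W_S$ and~\eqref{eq:etanu}, and the monotonicity input is the same as the paper's). Your approach is more elementary and bypasses the appendix machinery, at the cost of invoking a limit and dominated convergence where the paper has exact finite-$n$ identities. One small point of care in (i): your identity $\nu_x(S_mf)=\lim_n\cdots$ requires $S_mf\in L^\infty(W_S)$ to pass the $\mathcal M(W_S)$-limit through the test function, which is not automatic for general $f\in L^\infty(W_S)$ when $j_S$ is unbounded; but since the absorption parameter only involves $f=\11_D$, for which $S_m\11_D\leq 1\leq W_S$, this does not affect the conclusion. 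The second half of (i) matches the paper's argument verbatim.
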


%
%\begin{rem}
%    \label{rem:jsnotinteger2}
%    If $j_S$ is not assumed integer valued, then Proposition~2.5 remains true, except that the right hand term in~\eqref{eq:prop-poly-2c} has a different form (as will clearly appear in the proof, the term~$\frac1n$ is inherited from the fact that different values taken by $j_S$ are spaced by a distance at least one). 
%    \erem
%\end{rem}

We start with a preliminary lemma. Under Assumption~(A), we have, because of~\eqref{eq:prop12} in Proposition~\ref{prop:propajs1}, for any $\ell\geq 0$,
\[
G_\ell:=\left\{\mu\in\mathcal{M}(W_S),\aj_S(|\mu|)\leq\ell\right\}=\left\{\mu\in\mathcal{M}(W_s),\aj_S(x)\leq\ell\ |\mu|(\mathrm dx)\text{-ae}\right\}.
\]
Under Assumption~(A), the vector space $G_\ell$, endowed with the norm $\|\cdot\|_{W_S}$, is a Banach space.

%\den{Propositions à discuter : 1/ soit on fond l'ancienne proposition A1 dans le texte (en faisant disparaître le résultat général), c'est ce que j'ai commencé à faire ici; 2/ soit on inclut l'ancienne section A.1 directement après la proposition 2.5. Je ne suis pas sûr de savoir ce qu'avait le reviewer en tête et je n'ai pas vraiment de préférence. Si on garde la première solution, il reste des références à modifier dans la fin de la section (celle qui n'est pas encore bleutée) et il faudra vérifier que mes copier/coller/remplacer fonctionnent bien.}

{\color{black} 
	\begin{lem}
	\label{lem:technicbis}
	Assume that Assumption~(A) holds true and fix $\ell\geq 0$. Then the operator $\tS:G_\ell\to G_\ell$ defined by $\tS
	\mu=\theta_{0,S}^{-1}\mu S_1$ is a bounded linear operator and satisfies 
	\begin{equation}
	\label{eq:eta-H1-poly-bis}
		\left\| n^{-\ell} \tS^n\mu-E_\tS \mu\right\|_{W_S}\leq \alpha_{\tS,n}\,\|\mu\|_{W_S},\ \forall n\geq 1,\ \mu\in G_\ell,
	\end{equation}
	 with
	\[ 
	E_\tS\mu=\sum_{i\in I_S}\mu\left(\11_{\aj_S(\cdot)=\ell}\eta_{S,i}\right)\nu_{S,i}
	\]
	and
	\[
	\alpha_{\tS,n}=\alpha_{S,n}+\frac{1}{n}\left\|\sum_{i\in I_S}\eta_{S,i}\nu_{S,i}(W_S)\right\|_{W_S}\11_{\ell\geq 1},
	\]
	where in addition $E_\tS$ is a bounded linear operator on $G_\ell$ and $\alpha_{\tS,n}\to 0$ when $n\to +\infty$.
\end{lem}

\begin{proof}[Proof of Lemma~\ref{lem:technicbis}]
	We have $\sum_{i\in I_S} \eta_{S,i}\nu_{S,i}(W_S)\in L^\infty(W_S)$ by assumption.
	It follows from~\eqref{eq:eta} with $n=1$ that  $\mathcal{M}(W_S)$ is stable under $\tS$. Therefore, the stability of $G_\ell$ under $\tS$ is a consequence of~\eqref{eq:prop13} in Proposition~\ref{prop:propajs1}.
	Then, for all $\mu\in G_\ell$ and $f\in L^\infty(W_S)$ such that $\|f\|_{W_S}\leq 1$, we have, using Assumption~(A),
	\begin{multline*}
		\left|n^{-\ell}(\tS^n\mu)(f)-(E_\tS \mu)(f)\right|
		=\left|n^{-\ell}\theta_{0,S}^{-n}\mu S_n
		f-\sum_{i\in I_S}\mu\left(\11_{\aj_S(\cdot)=\ell}\eta_{S,i}\right)\nu_{S,i}(f)\right| \\ 
		\begin{aligned}
			& \leq \left|\theta_{0,S}^{-n}\mu\left(n^{-\aj_S(\cdot)} \11_{\aj_S(\cdot)=\ell} S_n
			f\right)-\sum_{i\in I_S}\mu\left(\11_{\aj_S(\cdot)=\ell}\eta_{S,i}\right)\nu_{S,i}(f)\right| \\
			&
			\qquad+\frac{1}{n}\theta_{0,S}^{-n}|\mu|\left(n^{-\aj_S(\cdot)} \11_{\aj_S(\cdot)\leq \ell-1} S_n W_S\right) \\ &
			\leq\alpha_{S,n}|\mu|\left(\11_{\aj_S(\cdot)=\ell}W_S\right)+\frac{1}{n}\left[\sum_{i\in I_S}|\mu|\left(\11_{\aj_S(\cdot)\leq\ell-1}\eta_{S,i}\right)\nu_{S,i}(W_S)\right.
			\\ & \qquad \left. +\alpha_{S,n}|\mu|\left(\11_{\aj_S(\cdot)\leq \ell-1}W_S\right)\right]\\
			& \leq \alpha_{S,n} |\mu|(W_S)+\frac{1}{n}|\mu|\left(\sum_{i\in I_S}\eta_{S,i}\nu_{S,i}(W_S)\right)\11_{\ell\geq 1}\\
			&\leq \alpha_{S,n} |\mu|(W_S) + \frac{1}{n}\left\|\sum_{i\in I_S}\eta_{S,i}\nu_{S,i}(W_S)\right\|_{W_S}\11_{\ell\geq 1} |\mu|(W_S).
		\end{aligned}
	\end{multline*}
	This implies~\eqref{eq:eta-H1-poly-bis}.
  Finally, since $G_\ell$ is a closed subset
	of the Banach space $\mathcal{M}(W_S)$, we deduce that $E_\tS\mu\in G_\ell$ for all $\mu\in G_\ell$ and the fact that $E_\tS$
        is a bounded operator on $G_\ell$ follows from~\eqref{eq:eta-H1-poly-bis}.
\end{proof}

%Let $\tP$ be a bounded linear operator on a normed real vector space $(B,|\cdot|)$.  We consider the following assumption.%, which is analogous to the $Q$-process equation.

%\medskip \noindent \textbf{Assumption (H).} There exists a bounded linear operator $E_\tP$ on $B$ and $\tJ_\tP\in\RR_+$ such that, for all $x\in B$ and all $n\geq 1$,
%\begin{equation}
%	\label{eq:eta-H1-poly}
%	\left| n^{-\tJ_\tP} \tP^nx-E_\tP x\right|\leq \alpha_{\tP,n}\,|x|,\ 
%\end{equation}
%where $(\alpha_{\tP,n})_{n\in\N}$ is a numerical sequence which converges to $0$ when $n\to+\infty$.
%
In the following lemma, we let $\VERT\cdot\VERT$ denote the operator norm. % {\color{red}[Il y avait ici la définition $\alpha_{\tS,0}=1$, mais ça n'apparaît plus dans le lemme ci-dessus et le cas $n=0$ n'apparaît plus dans la définition de (A).]}

% \medskip

% \den{Petite modification de la remarque suivante pour qu'elle soit mieux adaptée à notre présentation}
%\begin{rem}
%    One natural generalization of Assumption~(H) is to consider the situation were $\alpha_{\tP,n}=\alpha_{\tP,n}(x)$ is allowed to depend on $x$. One easily checks that the following result and its proof remain valid in this setting (replacing $\alpha_{\tP,n}$ by $\alpha_{\tP,n}(x)$ in~(i)).   
%    \erem 
%\end{rem}

\medskip

\begin{lem}
	\label{lem:fp-qsd-poly}
	Assume that~(A) holds, fix $\ell\geq 0$ and let $\tS:G_\ell\to G_\ell$ be defined as in Lemma~\ref{lem:technicbis}. Then,  
	\begin{enumerate}[(i)]
		\item for all $n\geq 1$,
		\[\VERT \tS^n \VERT\leq (\alpha_{\tS,n}+\VERT E_\tS\VERT)\, n^{\ell};\]
		\item if $\mu$ is an eigenvector of $\tS$ associated to $1$, then $E_\tS \mu=\11_{\ell=0}\mu$;
		\item  we have $\tS E_\tS=E_\tS \tS=E_{\tS}$; in particular, $E_\tS$ takes its values in the vector space of eigenvectors of $\tS$ associated to $1$.
	\end{enumerate}
\end{lem}

\begin{proof}[Proof of Lemma~\ref{lem:fp-qsd-poly}]
	Note that, according to Lemma~\ref{lem:technicbis}, inequality\eqref{eq:eta-H1-poly-bis} holds true.
	The first and second assertions are thus immediate. 	
	For the third one, we observe that
	\begin{multline*}
		\VERT {\tS}E_{\tS}-E_{\tS}\VERT \leq \VERT n^{-\ell}{\tS}^{n+1}-{\tS}E_{\tS}\VERT \\
		+\VERT (n+1)^{-\ell}{\tS}^{n+1}-n^{-\ell}{\tS}^{n+1}\VERT+\VERT (n+1)^{-\ell} {\tS}^{n+1}-E_{\tS}\VERT
	\end{multline*}
	where
	\begin{align*}
		\VERT n^{-\ell}{\tS}^{n+1}-{\tS}E_{\tS}\VERT &=\VERT {\tS}(n^{-\ell}{\tS}^n-E_{\tS})\VERT \leq  \VERT {\tS}\VERT\,\alpha_{{\tS},n}
	\end{align*}
	and
	\begin{align*}
		\VERT (n+1)^{-\ell}{\tS}^{n+1}-n^{-\ell}{\tS}^{n+1}\VERT & \leq \left(\frac{(n+1)^{\ell}}{n^{\ell}}-1\right)\VERT
                                                                           (n+1)^{-\ell}\tS^{n+1}\VERT \\ & \leq
                                                                                                            \left(\frac{(n+1)^{\ell}}{n^{\ell}}-1\right)
                                                                                                            (\alpha_{\tS,n}+\VERT
                                                                                                            E_\tS \VERT)
	\end{align*}
	by~(i), and
	\begin{align*}
		\VERT (n+1)^{\ell} {\tS}^{n+1}-E_{\tS}\VERT \leq \alpha_{{\tS},n+1},
	\end{align*}
	so that $\VERT {\tS}E_{{\tS}}-E_{\tS} \VERT \to 0$ when $n\to+\infty$, and hence 
	\[
	{\tS}E_{\tS} =E_{\tS} .
	\]
	Similarly, we have
	\begin{multline*}
		\VERT E_{{\tS}}{\tS}-E_{\tS}\VERT \leq \VERT n^{-\ell}{\tS}^{n+1}-E_{\tS} {\tS}\VERT \\
		+\VERT (n+1)^{-\ell}{\tS}^{n+1}-n^{-\ell}{\tS}^{n+1}\VERT +\VERT (n+1)^{\ell} {\tS}^{n+1}-E_{\tS}\VERT,
	\end{multline*}
	where
	\begin{align*}
		\VERT n^{-\ell}{\tS}^{n+1}-E_{\tS} {\tS}\VERT \leq \VERT {\tS}\VERT \,\VERT n^{-\ell}{\tS}^n-E_{\tS}\VERT\leq \alpha_{{\tS},n},
	\end{align*}
	and the other terms go to $0$ as in the previous case. Hence, we deduce that
	\[
		E_{\tS} {\tS}=E_{\tS} .\qedhere
	\]
\end{proof}
 }

\begin{proof}[Proof of Proposition~\ref{prop:mainbis}]
   {\color{black} \noindent \textit{Proof of (i).} Fix $x_*\in D$ such that $\eta_S(x_*)>0$ and let $\ell_*=\aj_S(x_*)$. 
    %It follows
    %from Lemma~\ref{lem:technicbis} that $\tS\mu=\theta^{-1}_{0,S}\mu S_1$ satisfies (H) on the Banach space $G_{\ell_*}$.
    According to
    Lemma~\ref{lem:fp-qsd-poly} with $\ell=\ell_*$, the operator \[
    E_\tS\mu=\sum_{i\in I_S}\mu\left(\11_{\aj_S(\cdot)=\ell_*}\eta_{S,i}\right)\nu_{S,i}\] on $G_{\ell_*}$ satisfies
    \[
    (E_\tS \tS)\delta_{x}=(\tS E_\tS)\delta_{x}=E_\tS\delta_{x},\ \text{for all $x\in D$ such that $\aj_S(x)\leq\ell_*$.}
    \]
     This means that, for all $x\in D$ such that $\aj_S(x)\leq\ell_*$,
    \begin{align}
    \label{eq:equseful1bis}
    \theta_{0,S}^{-1}\sum_{i\in I_S}\delta_{x}  S_1\left(\11_{\aj_S(\cdot)=\ell_*}\eta_{S,i}\right)\nu_{S,i}
    &=\11_{\aj_S(x)=\ell_*}\theta_{0,S}^{-1}\sum_{i\in I_S} \eta_{S,i}(x)\nu_{S,i}S_1 \\
    &=\11_{\aj_S(x)=\ell_*}\sum_{i\in I_S} \eta_{S,i}(x)\nu_{S,i}.\label{eq:equseful1bis2}
    \end{align}
    Since $j_S(x_*)=\ell_*$, we deduce from the equality between the right-hand-side of~\eqref{eq:equseful1bis} and~\eqref{eq:equseful1bis2} that \[\nu:=\frac{1}{\eta_S(x_*)}\sum_{i\in I_S} \eta_{S,i}(x_*)\nu_{S,i}\in\mathcal M_+(W_S)\] is a quasi-stationary distribution for
    $S_1$ with absorption parameter $\theta_{0,S}$. In addition, according to Proposition~\ref{prop:QSDbis}, we have $\nu(\eta_S)=1$ and $\nu(\11_{j_S(\cdot)=0})=1$, so that 
    $\nu(\11_{j_S(\cdot)=0}\eta_S)=1$. Therefore, there exists $x\in D$ such that $\eta_S(x)>0$ and $\aj_S(x)=0$.
    
    \medskip\noindent \textit{Proof of (ii).} Fix $x\in D$. Then, applying as above Lemma~\ref{lem:fp-qsd-poly} with $\ell=\aj_S(x)$ instead of $\ell_*$, we obtain~\eqref{eq:equseful1bis} and~\eqref{eq:equseful1bis2} with $\ell_*$ replaced by $\aj_S(x)$. Integrating on both sides the test function $\11_D$, this
    implies that
    \[
    \eta_S(x)=\sum_{i\in I_S} \eta_{S,i}(x)= \theta_{0,S}^{-1}\sum_{i\in I_S}\delta_{x}  S_1\left(\11_{\aj_S(\cdot)=\aj_S(x)}\eta_{S,i}\right)=\theta_{0,S}^{-1}\delta_x S_1\left(\11_{\aj_S(\cdot)=\aj_S(x)}\eta_S\right).%, \ \forall x\in D.
    \]
    Hence
    \begin{align*}
    \E_x\left[\eta_S(X_1)\11_{\aj_S(X_1)=\aj_S(x)}\right]=\theta_{0,S}\eta_S(x).
    \end{align*}
    We deduce that, for all $n\geq 1$, $\mathbb P_x$-almost surely,
    \begin{align*}
    \E_{X_{n-1}}\left[\eta_S(X_1)\11_{\aj_S(X_1)=\aj_S(x)}\right]\11_{j_S(X_{n-1})=j_S(x)}=\theta_{0,S}\eta_S(X_{n-1})\11_{j_S(X_{n-1})=j_S(x)}.
    \end{align*}
    Taking the expectation and using the Markov property, we deduce that
    \begin{align*}
    \E_x\left[\eta_S(X_{n})\11_{\aj_S(X_n)=\aj_S(x)}\11_{j_S(X_{n-1})=j_S(x)}\right]=\theta_{0,S}\mathbb E_x\left[\eta_S(X_{n-1})\11_{j_S(X_{n-1})=j_S(x)}\right].
    \end{align*}
    Because of the last assertion of Proposition~\ref{prop:propajs1}, we deduce that
    $\{\aj_S(X_{n})=\aj_S(x)\}=\{\aj_S(X_{n})=\aj_S(X_{n-1})=\cdots=\aj_S(x)\}$ up to a $\mathbb P_x$-negligible event, so that
    \begin{align*}
    \E_x\left[\eta_S(X_{n})\11_{\aj_S(X_n)=\aj_S(x)}\right]=\theta_{0,S}\mathbb E_x\left[\eta_S(X_{n-1})\11_{j_S(X_{n-1})=j_S(x)}\right].
    \end{align*}
    Then the property (ii) follows by induction.
    
    \medskip\noindent \textit{Proof of (iii).} Fix $n\geq 1$ and let $\mu\in{\cal M}_+(W_S)$  such that $\mu(\eta_S)>0$ and $\mu(n^{j_S(\cdot)}W_S)<+\infty$. Integrating~\eqref{eq:eta} with
    respect to the measure
    $n^{\aj_S(x)}\,\mu(\dd x)$
    % \[
    % \mu_n(\dd x)=\frac{n^{\aj_S(x)}\,\mu(\dd x)}{\mu(n^{\aj_S(\cdot)})},
    % \]
    we obtain, for all $f\in L^\infty(W_S)$ such that $\|f\|_{W_S}\leq 1$,
    % \begin{equation}
    % \left|\theta_{0,S}^{-n} \mu_n\left(n^{-\aj_S(\cdot)} \EE_\cdot(f(X_n)\11_{n<\tau_\d})\right)-\sum_{i\in I_S} \mu_n\left(\eta_{S,i}\right)\nu_{S,i}(f)\right|\leq \alpha_{S,n} \mu_n(W_S),
    % \end{equation}
    % so that
    \begin{equation}
    \left|\theta_{0,S}^{-n}\EE_\mu(f(X_n)\11_{n<\tau_\d})-\sum_{i\in I_S}\mu\left(n^{\aj_S(\cdot)}\eta_{S,i}\right)\nu_{S,i}(f)\right|\leq \alpha_{S,n}\,\mu(n^{\aj_S(\cdot)}W_S).
    \end{equation}
    Note that $W_S\geq 1$ (by assumption) and hence this inequality also applies to $f\equiv 1$, which will be used just afterward.
    
    Then we have for all measurable function $f:D\to \mathbb R$ bounded by $1$
    \begin{multline*}
    \left|\frac{\EE_\mu f(X_n)\11_{n<\tau_\d}}{\PP_\mu(n<\tau_\d)}-\frac{\sum_{i\in I_S}\mu\left(n^{\aj_S(\cdot)}\eta_{S,i}\right)\nu_{S,i}(f)}{\sum_{i\in I_S}\mu\left(n^{\aj_S(\cdot)}\eta_{S,i}\right)}\right| \\
    \begin{aligned}
    &\leq\frac{\EE_\mu |f(X_n)|\11_{n<\tau_\d}}{\PP_\mu(n<\tau_\d)}\,\frac{|\sum_{i\in I_S}\mu\left(n^{\aj_S(\cdot)}\eta_{S,i}\right)-\theta_{0,S}^{-n}\PP_\mu(n<\tau_\d)|}{\sum_{i\in I_S}\mu\left(n^{\aj_S(\cdot)}\eta_{S,i}\right)} \\ 
    &\qquad \qquad +\frac{|\theta_{0,S}^{-n}\EE_\mu
        f(X_n)\11_{n<\tau_\d}-\sum_{i\in I_S}\mu\left(n^{\aj_S(\cdot)}\eta_{S,i}\right)\nu_{S,i}(f)|}{\sum_{i\in I_S}\mu\left(n^{\aj_S(\cdot)}\eta_{S,i}\right)} \\ 
    & \leq
    2\alpha_{S,n}\frac{\mu\left(n^{\aj_S(\cdot)}W_S\right)}{\mu\left(n^{\aj_S(\cdot)}\eta_S\right)},
    \end{aligned}
    \end{multline*}
    which concludes the proof.
    
    \medskip\noindent\textit{Proof of (iv).} Let $\mu$ be a measure in $\mathcal M(W_S)$. The property (iv) is an immediate consequence of Lemma~\ref{lem:technicbis} with $\ell=j_S(|\mu|)$. }
\end{proof}

\section{Quasi-stationary distributions in reducible state spaces with two successive sets}

\label{sec:three-sets}

We start our study of the quasi-stationary distribution for reducible processes by focusing on cases where the state space can be separated into two successive
classes. This is the generic situation that can be used iteratively to
treat more complicated cases (see Section~\ref{sec:finite-classes}).

We consider a discrete time Markov process $(X_n,n\in\ZZ_+)$ evolving in a measurable set $D\cup\{\d\}$ with absorption at $\d\notin
D$ at time $\tau_\d$, and sub-Markovian semigroup $(S_n)_{n\in\ZZ_+}$. 
We assume  that the transition probabilities of $X$ satisfy
the structure displayed in Figure~\ref{fig:trangraph}: there is a measurable partition $\{D_1,D_2\}$ of $D$ such that the process
starting from $D_1$ can access $D_1\cup D_2\cup\{\d\}$ and the process starting from $D_2$ can only access $D_2\cup\{\d\}$. More formally, we assume that $\P_x(T_{D_1}=+\infty)=1$ for all $x\in D_2$, where we denote,
for any measurable set $A\subset D$, $ T_A=\inf\{n\in\ZZ_+,\ X_n\in A\}$.

We denote by $(P_n)$ the sub-Markovian semigroup of the process $X$ restricted to $D_1$, by $(R_n)$ the sub-Markovian
semigroup of the processes $X$ restricted to $D_2$ and by $Q$ the transition kernel from $D_1$ to $D_2$ for $X$. More formally, for
all measurable $f:D_1\to [0,+\infty)$ and $g:D_2\to[0,+\infty)$, for all $x\in D_1$ and $y\in D_2$, we define
\[
P_n f(x)=\EE_x(f(X_n)),\  R_ng(y)=\EE_y(g(X_n))\ \text{and}\  Qg(x)=\EE_x(g(X_1)).
\]
Note that, due to our notational convention about extensions of functions by 0 outside of their domain, the previous definitions mean
\[
P_n f(x)=\EE_x(f(X_n)\11_{n<T_{D_2\cup\d}}),\  R_ng(y)=\EE_y(g(X_n)\11_{n<\tau_\d})\ \text{and}\  Qg(x)=\EE_x(g(X_1)\11_{X_1\in D_2}).
\]
\begin{figure}
    \center {
        
        \def\svgwidth{0.7\linewidth}
        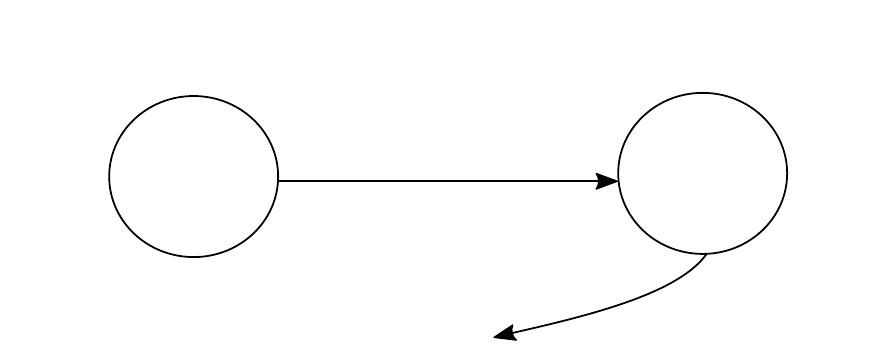}
    \caption{Transition graph displaying the relations between the sets $D_1$, $D_2$ and $\d$. The dashed lines indicate the domains and co-domains of the sub-Markov kernels $P,Q,R$.	\label{fig:trangraph}}
\end{figure}
{\color{black} In the rest of this section, the constants $\theta_{0,P}$ and $\theta_{0,R}$ denote respectively the exponential convergence parameters of the semigroups $(P_n)_{n\geq 0}$ and $(R_n)_{n\geq 0}$.}

\medskip

{\color{black} We will consider three situations. In the first one, we have $\theta_{0,P}>\theta_{0,R}$, so that the process evades $D_2$ at a strictly higher pace than it evades $D_1$, in which case we say that $D_1$ is a source. In the second one, we have $\theta_{0,P}<\theta_{0,R}$, so that the process evades $D_2$ at a strictly lower pace than it evades $D_1$, in which case we say that $D_1$ is a sink. In the third one, we have $\theta_{0,P}=\theta_{0,R}$, so that the process evades both $D_1$ and $D_2$ at the same pace, in which case we say that $D_1$ is a critical sink. As we will see, in the first situation, the assymptotic distribution of the process starting from $D_1$ and conditionned not to reach $\partial$ charges $D_1$, while, in the second and third situations, it only charges $D_2$.
	
	In order to prove this, we start by stating abstract results on the polynomial decay for upper triangular matrix of linear operators in Section~\ref{sec:app-2_new}. We then proceed to the proof of our probabilistic results in Section~\ref{sec:D1source} for the first case (where $\theta_{0,P}>\theta_{0,R}$), in Section~\ref{sec:D1sink} for the second case (where $\theta_{0,P}<\theta_{0,R}$), and in Section~\ref{sec:D1criticalsink} for the third case (where $\theta_{0,P}=\theta_{0,R}$).
}

\subsection{Polynomial decay for upper triangular matrix of linear operators}
\label{sec:app-2_new}

Let $B_1$ and $B_2$ be two Banach spaces, and ${\tP}:B_1\to B_1$, ${\tQ}:B_1\to B_2$, ${\tR}: B_2\to B_2$ three bounded operators. We define the Banach space $B$ as the direct sum of $B_1$ and $B_2$
%, denote by $p_i$ the projection operator on $B_i$, 
and  consider the operator ${\tS}={\tP}+{\tQ}+{\tR}$ on $B$, where ${\tP}_{\vert B_2}={\tQ}_{\vert B_2}={\tR}_{\vert B_1}=0$. Formally, ${\tS}$ can be represented as the following upper triangular matrix of linear operators:
\[
{\tS}=\begin{bmatrix}
	{\tP}&{\tQ}\\
	0&{\tR}
\end{bmatrix},
\]
so that
\[
{\tS}^n=\begin{bmatrix}
	{\tP}^n&\sum_{k=1}^n {\tR}^{n-k}{\tQ}{\tP}^{k-1}\\
	0&{\tR}^n
\end{bmatrix}.
\]
{\color{black} Note that the configuration of the operator matrix $\tS$ corresponds to the configuration of the transition kernel between the sets $D_1$ and $D_2$: $\tP$ is related to the kernel from $D_1$ to itself, $\tQ$ to the kernel from $D_1$ to $D_2$ and $\tR$ is related to the kernel from $D_2$ to itself. This will be made precise in the proofs of the next sections.}

The study of the spectrum of such upper triangular matrices of linear operators over a Banach space has already been considered in the literature, see for instance~\cite{BarraaBoumazgour2003,BenhidaZeroualiEtAl2005,Barnes2005,Zhang2013} and references therein. In the following propositions, we are interested in the polynomial decay of the operator ${\tS}$, which is related to the algebraic multiplicity of its leading eigenvalue.

We are interested in the following property, which is related to Assumption~(A) and actually already appeared in~\eqref{eq:eta-H1-poly-bis}, and the way it translates from $\tP$ and $\tR$ to $\tS$.

\medskip \noindent \textbf{Assumption (H).} There exists a bounded linear operator $E_\tP$ on $B$ and $\tJ_\tP\in\RR_+$ such that, for all $x\in B$ and all $n\geq 1$,
\begin{equation}
    \label{eq:eta-H1-poly}
    \left| n^{-\tJ_\tP} \tP^nx-E_\tP x\right|\leq \alpha_{\tP,n}\,|x|,\ 
\end{equation}
where $(\alpha_{\tP,n})_{n\in\N}$ is a numerical sequence which converges to $0$ when $n\to+\infty$. %We set $\alpha_{\tP,0}=1$.

For all $n\geq 0$, we set $\gamma_n=\left\VERT {\tR}^n\right\VERT$, $\Gamma_n=\sum_{k\geq n}\gamma_k$, $\theta_n=\left\VERT {\tP}^n\right\VERT$ and $\Theta_n=\sum_{k\geq n}\theta_k$. We consider :
\begin{itemize}
\item the case $\Gamma_0<+\infty$ in Proposition~\ref{prop:case1new} {\color{black}(this will correspond to the situation where the
    process escapes $D_2$ at a strictly higher pace than it evades $D_1$, that is $D_1$ is a source, see Section~\ref{sec:D1source})},
\item the case $\Theta_0<+\infty$ in Proposition~\ref{prop:case2new} {\color{black}(this will correspond to the situation where the
    process evades $D_2$ at a strictly lower pace than it evades $D_1$, that is $D_1$ is a sink, see Section~\ref{sec:D1sink})},
\item  the case $\Gamma_0=\Theta_0=+\infty$ in Proposition~\ref{prop:case3new} {\color{black}(this will correspond to the situation
    where the process evades both $D_1$ and $D_2$ at the same pace, that is $D_1$ is a critical sink, see Section~\ref{sec:D1criticalsink})}.
\end{itemize}

\begin{prop}
	\label{prop:case1new}
	Assume that $\Gamma_0=\sum_{n=0}^\infty\left\VERT {\tR}^n\right\VERT<+\infty$ and that the operator ${\tP}$ satisfies Assumption~(H). Then ${\tS}$ satisfies assumption (H) with 
	\[
	\tJ_{\tS}=\tJ_{\tP},\quad E_{\tS}=E_{\tP}+\sum_{\ell\geq 0} {\tR}^{\ell}{\tQ}E_{\tP},
	\]
	and 
	\[
	\alpha_{{\tS},n}= \alpha_{{\tP},n}+C \Gamma_n+C\,\sum_{k=0}^{n-1}\gamma_k\left(\alpha_{{\tP},n-k-1}+\frac{\tJ_{\tP}\,(k+1)}{n}\right),
	\]
	for some positive constant $C>0$ which does not depend on $n\geq 1$, and
	with the convention that $\alpha_{\tP,0}=1$.
\end{prop}

In the following proof, we will use repeatedly that, for all $n\geq 1$, $k\in \{0,n\}$ and $j\geq 0$,
\begin{align}
	\label{eq:ineq-used-several-times}
0\leq 	1-\left(\frac{n-k-1}{n}\right)^j\leq \frac{j\,(k+1)}{n}.
\end{align}

\begin{proof} 
	%	In the proof, we use $E$ for $E_{\tP}$, $j$ for $\tJ_{\tP}$ and $\alpha_{{\tP},n}$ stands for the numeric sequence in Assumption~(H1) for ${\tP}$.
	% We define the bounded operator
	%\[
	%F=E_{\tP}+\sum_{\ell\geq 0} R^{\ell}{\tQ}E_{\tP}.
	%\]

	Fix $n\geq 1$ and $x\in B_1$. Then
	\begin{multline}
		|n^{-\tJ_{\tP}}{\tS}^nx - E_{\tS} x| \leq |n^{-\tJ_{\tP}}{\tP}^nx-E_{\tP}x|+\sum_{k=0}^{n-1} \left|n^{-\tJ_{\tP}} {\tR}^k{\tQ}{\tP}^{n-k-1}x-{\tR}^k{\tQ}E_{\tP}x\right|\\
		+\sum_{k=n}^{\infty} |{\tR}^k{\tQ}E_{\tP}x|.
		\label{eq:decomp}
	\end{multline}
	Using Assumption~(H) and the fact that $\tQ$ and $\tP$ are bounded operators, 
%	
%	, the first term on the right hand side is bounded by $\alpha_{{\tP},n}(x)|x|$. Moreover, since ${\tQ}$ and $E_{\tP}$ are bounded operators, $|{\tR}^k{\tQ}E_{\tP}x|\leq \VERT {\tQ} E_{\tP}\VERT \gamma_k|x|$ and hence
we deduce that 
	\begin{align*}
		|n^{-\tJ_{\tP}}{\tP}^nx-E_{\tP}x|+\left|\sum_{k=n}^{\infty} {\tR}^k{\tQ}E_{\tP}x\right|\leq \alpha_{{\tP},n}|x|+\VERT {\tQ} E_{\tP}\VERT \Gamma_n |x|.
	\end{align*}

	For the second term in the r.h.s. of~\eqref{eq:decomp}, we have for all $k\in\{0,\ldots,n-2\}$,
	\begin{align*}
		\left|n^{-\tJ_{\tP}} {\tR}^k{\tQ}{\tP}^{n-k-1}x-{\tR}^k{\tQ}E_{\tP}x\right|&\leq \left|1-\left(\frac{n-k-1}{n}\right)^{\tJ_{\tP}}\right|\,\VERT {\tR}^k{\tQ}\VERT\,\left|(n-k-1)^{-\tJ_{\tP}}{\tP}^{n-k-1}x\right|\\
		&\qquad\qquad + \VERT {\tR}^k{\tQ}\VERT\,\left|(n-k-1)^{-\tJ_{\tP}}{\tP}^{n-k-1}x-E_{\tP}x\right|\\
		&\leq \VERT {\tQ}\VERT\,\frac{\tJ_{\tP}(k+1)}{n}\,\gamma_k\,\left(\alpha_{{\tP},n-k-1}+\VERT
		E_{\tP}\VERT\right)\,|x|\\
		&\qquad\qquad+\VERT \tQ\VERT\gamma_k\alpha_{{\tP},n-k-1}\,|x|
	%	\\
	%	&\leq  C \,\frac{k+1}{n}\,\gamma_k+C \gamma_k\alpha_{{\tP},n-k-1}\,|x|
	\end{align*}
	where we used~\eqref{eq:ineq-used-several-times}, Assumption~(H) and its immediate consequence
	\begin{align}
		\label{eq:usefull_new}
		\VERT \tP^n\VERT\leq (\alpha_{{\tP},n}+\VERT E_\tP\VERT)n^{\tJ_{\tP}}.
	\end{align}
  For $k=n-1$, we observe that 
	\[
	\left|n^{-\tJ_{\tP}} {\tR}^k{\tQ}{\tP}^{n-k-1}x-{\tR}^k{\tQ}E_{\tP}x\right|\leq 2(\VERT {\tQ}\VERT+\VERT {\tQ E_{\tP}}\VERT) \gamma_{n-1}\,|x|.
	\]
	
	Fiw now $x\in B_2$. Then ${\tS}^nx={\tR}^nx$ and $E_{\tP} x=0$, so that $|{\tS}^nx-E_{\tS} x|\leq \gamma_n|x|\leq \Gamma_n|x|$.
	
		We finally deduce that
	\begin{align*}
		\left|n^{-j_{\tS}}{\tS}^nx-E_{\tS}x\right|&\leq \alpha_{{\tS},n}\,|x|,
	\end{align*}
	where, for some constant $C>0$,
	\begin{align*}
		\alpha_{{\tS},n}=\alpha_{{\tP},n}&+C \Gamma_n+ C\,\sum_{k=0}^{n-1}\gamma_k\left(\alpha_{{\tP},n-k-1}+\frac{\tJ_{\tP}\,(k+1)}{n}\right),
	\end{align*}
	which converges to $0$ when $n\to+\infty$.
	
	%In order to conclude the proof, it only remains to prove that $j_{\tS}(x)=j_P(p_1 x)$ for all $x\in B$.
	%First note that, for any $x\in B_2$, we have ${\tS}^nx=R^nx\to0$ uniformly in $|x|\leq 1$. For $x\in B$, on the one hand, we have
	%\[
	%|{\tS}^np_1x|\leq |P^np_1x|+\VERT {\tQ}\VERT\sum_{k=1}^{n-1} \gamma_k  |P^{n-k}p_1x|+\gamma_n |p_1x|.
	%\]
	%Hence, using the fact that $\sum_{k=1}^{n} \gamma_k<+\infty$, we deduce that, if $n^{-\ell}P^np_1x\to 0$, then $n^{-\ell}{\tS}^n p_1x\to 0$ and hence $n^{-\ell}{\tS}^n x\to 0$, so that $j_{\tS}(x)\leq j_P(p_1x)$. On the other hand, if $n^{-\ell}{\tS}^n x\to 0$, then $n^{-\ell}P^n p_1x=n^{-\ell}p_1{\tS}^n x\to 0$, so that $j_P(p_1x)\leq j_{\tS}(x)$. This concludes the proof of the proposition.
	
	%In order to conclude the proof, we assume that $\tJ_P=\tJ_P\circ P$ and observe that in this case, for any $x=x_1+x_2\in B_1\oplus B_2$, 
	%\[
	%\tJ_{\tS}({\tS}x)=\tJ_{\tS}(Px_1+{\tQ}x_1+Rx_2)=\tJ_P(Px_1)=\tJ_P(x_1)=\tJ_{\tS}(x_1)=\tJ_{\tS}(x_1+x_2),
	%\]
	%where we used the fact that $Px_1\in B_1$, ${\tQ}x_1\in B_2$ and $Rx_2\in B_2$.
\end{proof}

%
%\subsection{Direct sum of operators, case 2}
%
%Let $B_1$ and $B_2$ be two Banach spaces, and $P:B_1\to B_1$, ${\tQ}:B_1\to B_2$, $R: B_2\to B_2$ three bounded operators. We define the Banach space $B$ as the direct sum of $B_1$ and $B_2$, denote by $p_i$ the projection operator on $B_i$, and  consider the operator ${\tS}=P+{\tQ}+R$ on $B$, where $P_{\vert B_2}={\tQ}_{\vert B_2}=R_{\vert B_1}=0$. We are interested in the asymptotic behaviour of ${\tS}^n$ when $n\to+\infty$.
%
%In the following proposition and its proof, we denote by  $(\alpha_{R,n})_n$ the sequence in Assumption~(H)  for $R$. For all $n\geq 0$, we set $\gamma_n=\left\VERT P^n\right\VERT$ and $\Gamma_n=\sum_{k\geq n}\gamma_k$.
%
%We make the following Assumption.
%
%\medskip \noindent \textbf{Assumption~(H3).} Assume that the semi-group $R$ satisfies Assumption~(H) and that 
%\[
%\beta_n(x):=\sum_{k=0}^{n-1} \alpha_{R,n-k-1}({\tQ}P^kx)\gamma_{k}\xrightarrow[n\to+\infty]{}0.
%\]
%

\begin{prop}
	\label{prop:case2new}
	Assume that $\Theta_0=\sum_{n=0}^\infty\left\VERT {\tP}^n\right\VERT<+\infty$ and that the operator ${\tR}$ satisfies Assumption~(H). Then ${\tS}$ satisfies assumption (H) with
	\[
	\tJ_{\tS}=\tJ_{\tR},\quad E_{\tS}=E_{\tR}+\sum_{\ell\geq 0} E_{\tR} {\tQ}{\tP}^\ell,
	\]
	and
	\[
	\alpha_{{\tS},n}=\alpha_{{\tR},n}+ C\Theta_n+ C \sum_{k=0}^{n-1} \theta_{k}\left( \alpha_{{\tR},n-k-1}+\frac{\tJ_{\tR}\, (k+1)}{n}\right),
	\]
	for some positive constant $C>0$, which does not depend on $n\geq 1$, and
	with the convention that $\alpha_{\tR,0}=1$.%. % (which depends on $\VERT \tQ\VERT$ and $\VERT E_\tR\tQ\VERT$)
        % and with the convention that $\alpha_{\tR,0}=1$.
\end{prop}

\begin{proof}
	We have, for all $n\geq 1$ and all $x\in B$,
	\begin{align*}
		\left|n^{-\tJ_{\tS}}{\tS}^nx-E_{\tS}x\right|&\leq \left|n^{-\tJ_{\tR}}{\tR}^nx-E_{\tR}x\right|+n^{-\tJ_{\tR}}\left|\tP^n x\right|\\
		&\qquad\qquad+n^{-\tJ_{\tR}}\sum_{k=0}^{n-1} \left|{\tR}^{n-k-1}{\tQ}{\tP}^{k}x-(n-k-1)^{\tJ_{\tR}}E_{\tR} {\tQ}{\tP}^{k}x\right|\\
		&\qquad\qquad+\sum_{k=0}^{n-1} \left(1- \left(\frac{n-k-1}{n}\right)^{\tJ_{\tR}}\right)|E_{\tR}{\tQ}{\tP}^{k}x|\\
		&\qquad\qquad+\sum_{k=n}^\infty  |E_{\tR} {\tQ}{\tP}^{k}x|.
	\end{align*}
	Using Assumption~(H) for ${\tR}$ and the fact that ${\tQ}$ is a bounded operator, we deduce that the first three terms are bounded by
	\begin{multline*}
		\alpha_{{\tR},n}|x|+\theta_n|x|+\VERT {\tQ}\VERT \sum_{k=0}^{n-1} \alpha_{{\tR},n-k-1} \left(\frac{n-k-1}{n}\right)^{\tJ_R}\theta_{k} |x|
		\\ \leq \left(\alpha_{{\tR},n}{\color{black}+\theta_n}+\VERT {\tQ}\VERT \sum_{k=0}^{n-1} \alpha_{{\tR},n-k-1}\theta_{k}\right)\,|x|.
	\end{multline*}
	Using~\eqref{eq:ineq-used-several-times}, we deduce that the fourth and fifth terms are bounded by
	\begin{multline*}
		\sum_{k=0}^{n-1} \VERT E_{\tR} {\tQ}\VERT \left(1- \left(\frac{n-k-1}{n}\right)^{\tJ_{\tR}}\right)\theta_{k}|x|+\sum_{k=n}^\infty  \VERT E_{\tR} {\tQ}\VERT  \theta_{k}|x|\\
		\leq \VERT E_{\tR} {\tQ}\VERT \left(\sum_{k=0}^{n-1} \frac{\tJ_{\tR}\,(k+1)}{n}\,\theta_k+\Theta_n\right)\,|x|.
	\end{multline*}
	We finally deduce that
	\begin{align*}
		\left|n^{-j_{\tS}}{\tS}^nx-E_{\tS}x\right|&\leq \alpha_{{\tS},n}\,|x|,
	\end{align*}
	where, for some constant $C>0$,
	\[
	\alpha_{{\tS},n}= \alpha_{{\tR},n}+C \sum_{k=0}^{n-1} \alpha_{{\tR},n-k-1}\theta_{k}+ C\Theta_n+C\,\sum_{k=0}^{n-1} \frac{\tJ_{\tR}\,(k+1)}{n}\,\theta_k,
	\]
	{\color{black}which goes to $0$ when $n\to +\infty$.}
\end{proof}

\begin{prop}
	\label{prop:case3new}
	Assume ${\tP}$ and ${\tR}$ both satisfy Assumption~(H), with $\tJ_{\tP}= 0$. Then ${\tS}$ satisfies Assumption~(H) with
	\[
	\tJ_{\tS}=1+\tJ_{\tR},\quad E_{\tS}=\frac{1}{\tJ_{\tS}}E_{\tR}{\tQ}E_{\tP}
	\]
	and
	\[
		\alpha_{{\tS},n}= \frac{C}{n}\Big( % \alpha_{{\tP},n}+ C
		\tJ_{\tR}+ \sum_{k=0}^n \alpha_{{\tP},k}+\sum_{k=0}^{n} \alpha_{{\tR},n-k} \left(\frac{n-k}{n}\right)^{\tJ_{\tR}} \Big)
	\]
	for some positive constant $C$, which does not depend on $n\geq 1$, and with the convention that $\alpha_{\tP,0}=\alpha_{\tR,0}=1$. 
	% (which depends on $\VERT E_{\tR} {\tQ} E_{\tP}\VERT$, $\VERT E_{\tR} {\tQ}\VERT$, $\VERT
        % {\tQ}\VERT$ and $\VERT E_{\tP}\VERT$),
        % and with the convention that $\alpha_{\tP,0}=\alpha_{\tR,0}=1$.
\end{prop}

%\begin{rem}
%	If $\alpha_{R,n}(x)$ and $\alpha_{{\tP},n}(x)$ do not depend on $x$, then Assumption~(H2) holds true and $\alpha_{{\tS},n}(x)$ does not depend on $x$.
%\end{rem}

%\begin{rem}
%	In fact, Assumption~(H2) is only used to derive that the term
%	\[
%	a=\lim_{n\to+\infty}\frac{1}{n^{j_{\tS}(x)}}\sum_{k=1}^n (n-k)^{j_R({\tQ}{\tP}^{k-1}x)}
%	\]
%	exists and is equal to $1/\tJ_{\tS}(x)$. Under different assumptions, this value may exist but be different, in which situation the above result holds true with $a$ instead of $1/\tJ_{\tS}(x)$ in the definition of $E_{\tS}$.
%\end{rem}

\begin{proof}
	Using the fact that ${\tS}^n x={\tP}^nx+\sum_{k=1}^n {\tR}^{n-k}{\tQ}{\tP}^{k-1}x{\color{black}+\tR^n x}$, we deduce that
	\begin{align}
		\label{eq:maj}
		\left|n^{-\tJ_{\tS}}{\tS}^nx-E_{\tS}x\right| 
		& \leq |n^{-\tJ_{\tS}}{\tP}^nx| +|n^{-\tJ_{\tS}}{\tR}^nx|\notag\\ 
		&\qquad\qquad+n^{-\tJ_{\tS}}\sum_{k=1}^{n} \left|{\tR}^{n-k}{\tQ}{\tP}^{k-1}x-(n-k)^{\tJ_{\tR}}E_{\tR}{\tQ}{\tP}^{k-1}x\right|\notag\\
		&\qquad\qquad +n^{-\tJ_{\tS}}\sum_{k=1}^n (n-k)^{\tJ_{\tR}}\left|E_{\tR}{\tQ}{\tP}^{k-1}x -E_{\tR}{\tQ}E_{\tP}x\right|\notag\\
		&\qquad\qquad +\left|n^{-\tJ_{\tS}}\sum_{k=1}^n (n-k)^{\tJ_{\tR}}-\frac{1}{\tJ_{\tS}}\right|\left|E_{\tR}{\tQ}E_{\tP}x\right|
	\end{align}
	
	For the first two terms on the right hand side, we deduce from~\eqref{eq:usefull_new} applied to $\tP$ and $\tR$, {\color{black}and the fact that $\tJ_\tS\geq 1+\tJ_R$ and $\tJ_\tS \geq 1+\tJ_P$} that
	\begin{align}
		\label{eq:maj1}
		n^{-\tJ_{\tS}}|{\tP}^nx|+n^{-\tJ_{\tS}}|{\tR}^nx|\leq (\alpha_{{\tP},n}+\alpha_{{\tR},n}+\VERT E_{\tP}\VERT +\VERT E_{\tR}\VERT)\,n^{-1}\,|x|.
	\end{align}
	
	For the third term, we use that, for all $n\geq 1$, using again~\eqref{eq:usefull_new} and the boundedness of ${\tQ}$,
	\[
	|{\tQ}{\tP}^{n-1}x|\leq \VERT {\tQ}\VERT  (\alpha_{{\tP},n-1}+\VERT E_{\tP}\VERT) |x|.%\text{ and }\left|n^{-j_{\tR}({\tQ}E_{\tP}x)}{\tR}^{n-k}{\tQ}{\tP}^{k-1}x\right|\leq C|x|.
	\]
	Hence,  using Assumption~(H) for ${\tR}$, we obtain, for all $k\geq 1$,
	\begin{align*}
		\left|{\tR}^{n-k}{\tQ}{\tP}^{k-1}x-(n-k)^{\tJ_{\tR}}E_{\tR}{\tQ}{\tP}^{k-1}x\right|
		&\leq \alpha_{{\tR},n-k} (n-k)^{\tJ_{\tR}} \VERT {\tQ}\VERT  (\alpha_{{\tP},k-1}+\VERT E_{\tP}\VERT) |x|.
		% \\
		% &\leq \overline{\alpha_{R,n-k}}\, (n-k)^{\tJ_R} \VERT {\tQ}\VERT  (\alpha_{{\tP},k-1}(x)+\VERT E_{\tP}\VERT) |x|,
	\end{align*}
	%where $\overline{\alpha_{R,n-k}}:= \alpha_{R,n-k}({\tQ}{\tP}^{k-1}x)$ which does not depend on $k$, $n$ or $x$ by Assumption~(H2).
	Thus, using again the fact that $\tJ_\tS\geq 1+\tJ_R$ and $\tJ_\tS \geq 1+\tJ_P$, 
	\begin{multline}
		n^{-\tJ_{\tS}}\sum_{k=1}^{n} \left|{\tR}^{n-k}{\tQ}{\tP}^{k-1}x-(n-k)^{\tJ_{\tR}}E_{\tR}{\tQ}{\tP}^{k-1}x\right| \\ \leq  \VERT
		{\tQ}\VERT\,|x|\, \left(\max_{k\geq 0}\alpha_{{\tP},k}+\VERT E_{\tP}\VERT\right)\,
		\frac{1}{n}\sum_{k=1}^{n} \alpha_{{\tR},n-k} \left(\frac{n-k}{n}\right)^{\tJ_{\tR}} .
		\label{eq:maj2}
	\end{multline}
	
	For the fourth term, we use Assumption~(H) for ${\tP}$ to derive (using again the fact that $\tJ_\tS\geq 1+\tJ_R$ and $\tJ_\tS \geq 1+\tJ_P$)
	\begin{align}
		\label{eq:maj3}
		n^{-\tJ_{\tS}}\sum_{k=1}^n (n-k)^{\tJ_{\tR}}\left|E_{\tR}{\tQ}{\tP}^{k-1}x -E_{\tR}{\tQ}E_{\tP}x\right|\leq \frac{\VERT E_{\tR} {\tQ}\VERT|x|}{n}\sum_{k=1}^n \alpha_{{\tP},k-1}(1-k/n)^{\tJ_{\tR}}.
	\end{align}
	
	Finally, using again the fact that $\tJ_\tS\geq 1+\tJ_R$ and $\tJ_\tS \geq 1+\tJ_P$, the fifth term in~\eqref{eq:maj} is bounded by
	\begin{align}
		&\VERT E_{\tR} {\tQ} E_{\tP}\VERT\left|n^{-\tJ_{\tS}}\sum_{k=1}^n (n-k)^{\tJ_{\tR}}-\frac{1}{\tJ_{\tS}}\right|\,|x|
		\nonumber\\%+\VERT E_R {\tQ} E_{\tP}\VERT n^{-\tJ_{\tS}}\sum_{k=1}^{k_0} \left|(n-k)^{\tJ_R}-(n-k)^{\tJ_R}\right|\notag\\
		&\qquad\qquad\qquad\leq \VERT E_{\tR} {\tQ} E_{\tP}\VERT \sum_{k=1}^n \left|\frac{1}{n}(1-k/n)^{\tJ_{\tR}}-\int_{(k-1)/n}^{k/n}(1-u)^{\tJ_{\tR}}\,du\right|\,|x|\notag\\
		&\qquad\qquad\qquad\leq  \VERT E_{\tR} {\tQ} E_{\tP}\VERT\frac{\tJ_{\tR}}{n}\,|x|.\label{eq:maj4}
	\end{align}
	
	Combining~\eqref{eq:maj} and the bounds~\eqref{eq:maj1},~\eqref{eq:maj2},~\eqref{eq:maj3},~\eqref{eq:maj4} ends the proof of Proposition~\ref{prop:case3new}.
\end{proof}

\subsection{\color{black}  Case where $D_1$ is a source ($\theta_{0,P}>\theta_{0,R}$)} 

\label{sec:D1source}

%
%
%
%We recall that the \textit{exponential convergence parameter} of the semi-group $(P_n)_{n\in\N}$ is given by
%\[
%\theta_P(x):=\inf\{\theta\geq 0,\  \limsup_{n\to+\infty} \theta^{-n} \delta_x P_n=0\},\ \forall x\in D_2,
%\]
%and similarly for $(R_n)_{n\in\N}$.
%
%We also set $\theta_{0,P}=\sup_{x\in D_2} \theta_P(x)$ and define the \textit{polynomial convergence parameter} of the semi-group $(P_n)_{n\in\N}$ as
%\[
%j_P(x):=\inf\{\ell\geq 0,\  \limsup_{n\to+\infty} n^{-\ell} \theta_{0,P}^{-n}\delta_x P_n=0\},\ \forall x\in D_2.
%\]
%and similarly for $(R_n)_{n\in\N}$, and w

%We consider now three different situations (which are in correspondence with the three cases studied in
%Section~\ref{sec:app-2_new}). T

{\color{black} In this section, we consider the situation where the process evades $D_2$ at a strictly higher pace than it evades $D_1$. This is made precise by the following assumption, which will allow us to make use of Proposition~\ref{prop:case1new}.}

\medskip \noindent \textbf{Assumption (A1)} We have $j_{0,P}<+\infty$, the process $X$ restricted to $D_1$ satisfies Assumption~(A)  and there exists a measurable function $W_R:D_2\to[1,+\infty)$ such that, for some constants $\gamma\in[0,\theta_{0,P})$ and $c_1>0$, for all $x\in D_1$ and $y\in D_2$, 
\begin{equation}
  \label{eq:A1}
  \E_x(W_R(X_1))\leq W_P(x)\text{ and }\E_y(W_R(X_n))\leq c_1\gamma^n W_R(y),\ \forall n\geq 0.  
\end{equation}

\begin{rem}
	\label{rem:A1withconstants}
	Note that Assumption~(A) remains valid if  the function $W_S$ is multiplied by a positive constant. Hence, in the above assumption (A1)  the requirement $\E_x(W_R(X_1))\leq W_P(x)$ is actually equivalent to  $\E_x(W_R(X_1))\leq C\,W_P(x)$ for some positive constant $C>0$. 
	\erem
\end{rem}

\begin{rem}
	\label{rem:defWRWP}
	Possible candidates for $W_R\geq 1$ in Assumption~(A1) %(resp. $W_P\geq 1$ in Assumption~(A2)) 
	are the exponential moment of
	exit times from $D_2$. %(resp. from $D_1$). 
	Indeed, if $W_R(y)=\E_y(\gamma^{-\tau_\d})$ is finite for all $y\in D_2$, then
	$\E_y(W_R(X_1))= \gamma W_R(y)$ for all $y\in D_2$. Indeed, we have, using the Markov property at time $1$ and the fact that, for all $y\in D_2$, $\tau_\d\geq 1$,
	\begin{align*}
		\E_y(W_R(X_1))=\E_y(\E_{X_1}(\gamma^{-\tau_\d}))=\E_y(\gamma^{-(\tau_\d+1)})=\gamma^{-1} W_R(y).
	\end{align*}
	
	\erem
\end{rem}

{\color{black} The following theorem states that Assumption~(A1) implies Assumption~(A), with explicit parameters.}

\begin{thm}
	\label{thm:A1}
		Assume that Assumption~(A1) holds true. Then $X$ satisfies Assumption~(A) with $W_S=W_P+W_R$.
	Moreover, we have $\theta_{0,S}=\theta_{0,P}$, $j_S=j_P$, and,  for all $i\in I_S=I_P$, $\eta_{S,i}\propto\eta_{P,i}$. In addition, there exists a constant $C>0$, independent of $x\in D$ and $n\in\NN$, such that
		\[
		\nu_{S,i}\propto \nu_{P,i}+\sum_{k\geq 0} \theta_{0,S}^{-k-1}\P_{\nu_{P,i}}\left(T_{D_2}=1,\ X_{k+1}\in\cdot\right),
		\]
		with inverse proportionality constant than for $\eta_{S,i}\propto\eta_{P,i}$,
		and 
		{\color{black}\[
		\alpha_{S,n}=C \sum_{k=0}^{n} \left(\frac{\gamma}{\theta_{0,P}}\right)^k \cdot \left(\alpha_{P,n-k}+j_{0,P}\frac{k}{n}\right),
              \]}
     for some positive constant $C>0$ which does not depend on $n$, and with the convention that $\alpha_{P,0}=1$.
\end{thm}

\begin{rem}
	\label{rem:slowconv}
	In the conclusion of the last theorem, if $j_P$ (resp. $j_R$) is not identically equal to 0, then the convergence rate of
        $\alpha_{S,n}$ to 0 is $O(1/n)$, even if $\alpha_{P,n}$ converge geometrically to $0$.
%Similarly, in conclusion~(i) (resp. (ii)),
	\erem
\end{rem}

\begin{proof}[Proof of Theorem~\ref{thm:A1}] 
	We define 
	the linear operators $\tP:{\cal M}(W_P)\to {\cal M}(W_P)$, $\tQ:{\cal M}(W_P)\to {\cal M}(W_R)$ and $\tR:{\cal M}(W_R)\to
	{\cal M}(W_R)$ (these notations implicitely assume that $\mathcal{M}(W_P)\subset\mathcal{M}(D_1)$ and
	$\mathcal{M}(W_R)\subset\mathcal{M}(D_2)$) by
	\begin{align*}
		\tP \mu=\theta_0^{-1}\mu P_1,\quad \tQ \mu = \theta_0^{-1}\mu Q,\quad\text{and }\tR \mu = \theta_0^{-1}\mu R_1,
	\end{align*} 
	where $\theta_0=\theta_{0,P}$.
	{\color{black} Using Assumption (A1) we observe that all these operators are bounded. Our aim is to apply Proposition~\ref{prop:case1new} to $\tS:{\cal M}(W_S)\to {\cal M}(W_S)$, where ${\cal M}(W_S)\equiv {\cal M}(W_P)\oplus {\cal
		M}(W_R)$, with $W_S=W_R+W_P$ and $\tS=\tP+\tQ+\tR$.} Beware that $\tP,\tR,\tQ,\tS$ act on the left on $\mu$ while $P_n,R_n,Q,S_n$ act on the right, so that, for instance, $\tR \tP\mu=\theta_0^{-2}\mu P_1 R_1$.

	\medskip
	We define $B_2$ as the Banach space ${\cal M}(W_R)$ and observe that the operator $\tR:B_2\to B_2$ is bounded and
	\begin{align*}
		\sum_{n=0}^\infty \VERT \tR^n\VERT\leq  \sum_{n=0}^\infty\ \theta_{0,P}^{-n}\sup_{\mu\in B_2,\ |\mu|(W_R)=1} |\mu| R_n W_R.
	\end{align*}
	It follows from~\eqref{eq:A1} that
	\begin{align*}
		\sum_{n=0}^\infty \VERT \tR^n\VERT\leq  c_1\sum_{n=0}^\infty\ \theta_{0,P}^{-n} \gamma^n  \sup_{\mu \in B_2,\ |\mu|(W_R)=1} |\mu| (W_R) = \frac{c_1\theta_0}{\theta_{0,P}-\gamma}.
	\end{align*}
	Moreover, $\Gamma_n\leq c_1\frac{\gamma^n/\theta_{0,P}^{n-1}}{\theta_{0,P}-\gamma}$ and $\gamma_n\leq c_1\gamma^n/\theta_{0,P}^n$ (using the notations of Proposition~\ref{prop:case1new}).
	In particular, if $x\in D_2$, then~\eqref{eq:eta} holds true with $\eta_{S,i}(x)=0$ for all $i\in I_S=I_P$, $j_S(x)=0$, $W_S(x)=W_R(x)$ and $\alpha_{S,n}=c_1(\frac{\gamma}{\theta_{0,P}})^n$.

	From now on we assume that $x\in D_1$ and consider the vector space $B_1=\{\mu\in\mathcal M(W_P),\ j_P(|\mu|)\leq j_P(x)\}$. By
	Lemma~\ref{lem:technicbis}, the operator $\tP:B_1\to B_1$ satisfies~\eqref{eq:eta-H1-poly-bis} with $\ell=j_P(x)$, and hence Assumption~(H) with $\tJ_\tP=j_P(x)$ and
	\begin{align}
		\label{eq:Epmu1}
		E_\tP \mu=\sum_{i\in I_P}\mu(\11_{\aj_P(\cdot)=\aj_P(x)}\eta_{P,i})\nu_{P,i} 
	\end{align}
	and, using the fact that $\left\|\sum_{i\in I_P}\eta_{P,i}\nu_{P,i}(W_P)\right\|_{W_P}$ is finite,
	{\color{black}\[
	\alpha_{\tP,n}=C\left(\alpha_{P,n}+\frac{\11_{j_P(x)\geq 1}}{n}\right)
	\]
	for some constant $C>0$.}
	
	Note also that $\tQ:B_1\to B_2$ is a bounded operator by~\eqref{eq:A1}. As a consequence, according to Proposition~\ref{prop:case1new}, $\tS$ restricted to $B=B_1\oplus B_2$ also satisfies Assumption~(H)
	with $\tJ_\tS=j_P(x)$ and for all $\mu\in B_1$, % $y\in D_1$ such that $j_P(y)\leq j_P(x)$,
	{\color{black}\begin{align*}
		E_\tS\mu &=E_\tP\mu +\sum_{k\geq 0} \tR^k\tQ E_\tP\mu\\
		&=\sum_{i\in I_P}\mu(\11_{\aj_P(\cdot)=\aj_P(x)}\eta_{P,i})\nu_{P,i}+\sum_{k\geq 0} \theta_{0,P}^{-k-1}\sum_{i\in I_P}\mu(\11_{\aj_P(\cdot)=\aj_P(x)}\eta_{P,i})\P_{\nu_{P,i}}\left(T_{D_2}=1,\ X_{k+1}\in\cdot\right)\\
		&=\sum_{i\in I_P}\mu(\11_{\aj_P(\cdot)=\aj_P(x)}\eta_{P,i})\left(\nu_{P,i}+\sum_{k\geq 0} \theta_{0,P}^{-k-1}\P_{\nu_{P,i}}\left(T_{D_2}=1,\ X_{k+1}\in\cdot\right)\right).
	\end{align*}}
	and
	\begin{align}
		\alpha_{\tS,n}&=\alpha_{{\tP},n}+ C \Gamma_n+C\,\sum_{k=0}^{n-1}\gamma_k\left(\alpha_{{\tP},n-k-1}+\frac{\tJ_{\tP}(k+1)}{n}\right)\nonumber\\
		& \leq C \sum_{k=0}^{n} \left(\frac{\gamma}{\theta_{0,P}}\right)^k \left(\alpha_{P,n-k}+\frac{\11_{j_P(x)\geq
			1}}{n+1-k}+j_P(x)\frac{k}{n}\right) \label{eq:xdependstep1}\\ &
		\leq \alpha_{S,n}:=C \sum_{k=0}^{n} \left(\frac{\gamma}{\theta_{0,P}}\right)^k \left(\alpha_{P,n-k}+j_{0,P}\frac{k}{n}\right)\nonumber
	\end{align}
	for some constant $C>0$ that may change from line to line, where we used $\11_{j_{0,P}\geq 1}\leq j_{0,P}$ and $\frac{1}{n-k+1}\leq\frac{k}{n}$.
	Using the fact that $\tS^n\mu=\theta_{0,P}^{-n}\mu S_n$ and taking $\mu=\delta_x$, we deduce that, for all $x\in D$ and all $f\in L^\infty(W_S)$,
	\begin{multline}
		\label{eq:tli1new}
		\left|n^{-j_P(x)}\theta_{0,P}^{-n}S_n f(x)-\sum_{i\in I_P}\eta_{P,i}(x)\left(\nu_{P,i}(f)+\sum_{k\geq 0} \theta_{0,P}^{-k-1} \E_{\nu_{P,i}}\left(\11_{T_{D_2}=1}f(X_{k+1})\right)\right)\right|\\
		\leq \alpha_{S,n}W_S(x)|f|.
	\end{multline}

	It only remains to prove that $j_S(x)=j_P(x)$ for all $x\in D$ (recall that under our convention $j_P$ is extended to $D_2$ by the value $0$). On the one hand, the definitions of $j_S$, $j_P$ and $S$ clearly imply that $j_S(x)\geq j_P(x)$ for all $x\in D$. On the other hand,  inequality~\eqref{eq:tli1new} implies that, for all $\varepsilon>0$, 
	\[
	\liminf_{n\to+\infty} n^{-(j_P(x)+\varepsilon)}\theta_{0,P}^{-n}S_n \11_D(x)=0,
	\]
	so that $j_S(x)\leq j_P(x)+\varepsilon$ for all $\varepsilon>0$, and hence $j_S(x)\leq j_P(x)$. This concludes the proof of Theorem~\ref{thm:A1}.
\end{proof}

    \subsection{\color{black} Case where $D_1$ is a sink ($\theta_{0,P}<\theta_{0,R}$)} 

\label{sec:D1sink}

%
%
%
%We recall that the \textit{exponential convergence parameter} of the semi-group $(P_n)_{n\in\N}$ is given by
%\[
%\theta_P(x):=\inf\{\theta\geq 0,\  \limsup_{n\to+\infty} \theta^{-n} \delta_x P_n=0\},\ \forall x\in D_2,
%\]
%and similarly for $(R_n)_{n\in\N}$.
%
%We also set $\theta_{0,P}=\sup_{x\in D_2} \theta_P(x)$ and define the \textit{polynomial convergence parameter} of the semi-group $(P_n)_{n\in\N}$ as
%\[
%j_P(x):=\inf\{\ell\geq 0,\  \limsup_{n\to+\infty} n^{-\ell} \theta_{0,P}^{-n}\delta_x P_n=0\},\ \forall x\in D_2.
%\]
%and similarly for $(R_n)_{n\in\N}$, and w

%We consider now three different situations (which are in correspondence with the three cases studied in
%Section~\ref{sec:app-2_new}). T

{\color{black} In this section, we consider the situation where the process evades $D_2$ at a strictly lower pace than it evades $D_1$. This is made precise in the following assumption, which will allow us to make use of Proposition~\ref{prop:case2new}.}

\medskip \noindent \textbf{Assumption (A2)}  We have $j_{0,R}<+\infty$, the process $X$ restricted to $D_2$ satisfies Assumption~(A) and there exists a measurable function $W_P:D_1\to[1,+\infty)$ such that, for some constants $\gamma\in[0,\theta_{0,R})$ and $c_2>0$, for all $x\in D_1$, 
\begin{align}
    \label{eq:A2}
\E_x(W_R(X_1))\leq W_P(x)\text{ and }\E_x(W_P(X_n))\leq c_2 \gamma^nW_P(x),\ \forall n\geq 0.
\end{align}

{\color{black} We emphasize that Remarks~\ref{rem:A1withconstants} and~\ref{rem:defWRWP} (with $W_R$ and $D_2$ replaced by $W_P\geq 1$
  and $D_1$) also apply to Assumption~(A2). The following theorem states that Assumption~(A2) implies Assumption~(A), with explicit
  parameters. In this situation the limiting distribution of the process starting from $D_1$ only charges $D_2$.}

\medskip

\begin{thm}
    \label{thm:A2}
    Assume that Assumption~(A2) holds true. Then $X$ satisfies Assumption~(A) with $W_S=W_P+W_R$.
    Moreover, there exists a constant $C>0$, independent of $x\in D$ and $n\in\NN$, such that $\theta_{0,S}=\theta_{0,R}$ and, for all $x\in D$,  
    \[
    j_S(x)=\begin{cases}
        \max_{n\geq 0} j_R(\delta_x P_n Q)&\text{ if }x\in D_1,\\
        j_R(x)&\text{ if }x\in D_2
    \end{cases}
    \] 
    and for all $i\in I_S=I_R$,
    \[
    \eta_{S,i}(x)=
    \EE_x\left(\theta_{0,R}^{- T_{D_2}}\eta_{R,i}(X_{T_{D_2}})\11_{j_R(X_{T_{D_2}})=j_S(x)}\right)
    \]
    $\nu_{S,i}=\nu_{R,i}$ and
    {\color{black}\[
    \alpha_{S,n}=C\sum_{k=0}^{n}\left(\frac{\gamma}{\theta_{0,R}}\right)^k\cdot\left(\alpha_{R,n-k}+j_{0,R}\frac{k}{n}\right),
  \]}
  with the convention that $\alpha_{R,0}=1$.
\end{thm}

We emphasize that Remark~\ref{rem:slowconv} also applies to the convergence rate obtained in the last theorem.

\begin{proof}[Proof of Theorem~\ref{thm:A2}]
	% {\color{black} [Le premier paragraphe est presque identique à la preuve du théorème~\ref{thm:A1}, mais cela permet d'avoir une preuve indépendante.]}
    As in the proof of Theorem~\ref{thm:A1}, we define 
    the linear operators $\tP:{\cal M}(W_P)\to {\cal M}(W_P)$, $\tQ:{\cal M}(W_P)\to {\cal M}(W_R)$ and $\tR:{\cal M}(W_R)\to
    {\cal M}(W_R)$ by
    \begin{align*}
        \tP \mu=\theta_0^{-1}\mu P_1,\quad \tQ \mu = \theta_0^{-1}\mu Q,\quad\text{and }\tR \mu = \theta_0^{-1}\mu R_1,
    \end{align*} 
    where  $\theta_0=\theta_{0,R}$.  Using Assumption (A2), we observe that all these operators are bounded. Our aim is to apply Proposition~\ref{prop:case2new} to $\tS:{\cal M}(W_S)\to {\cal M}(W_S)$, where ${\cal M}(W_S)\equiv {\cal M}(W_P)\oplus {\cal
    	M}(W_R)$, with $W_S=W_R+W_P$ and $\tS=\tP+\tQ+\tR$, where ${\cal M}(W_S)\equiv {\cal M}(W_P)\oplus {\cal
        M}(W_R)$, with $W_S=W_R+W_P$ and $\tS=\tP+\tQ+\tR$. Beware that $\tP,\tR,\tQ,\tS$ act on the left on $\mu$ while $P_n,R_n,Q,S_n$ act on the right, so that, for instance, $\tR \tP\mu=\theta_0^{-2}\mu P_1 R_1$.

    For all $x\in D_2$, we have $\delta_x S_n=\delta_x R_n$, so that~\eqref{eq:eta} holds true with $I_S=I_R$, $\eta_{S,i}(x)=\eta_{R,i}(x)$,  $j_S(x)=j_R(x)$ and $\alpha_{S,n}=\alpha_{R,n}$. 
    This also implies that $\theta_{S}(x)=\theta_{R}(x)$ for all $x\in D_2$.

    We fix now $x\in D_1$.
    We set 
    \[
    \tJ(x):=\max_{n\geq 0} j_R(\delta_x P_nQ)
    \]
    and consider  the operators $\tP$, $\tR$ and $\tS$ restricted to the Banach space 
    \[
    B=B_1\oplus B_2\subset \mathcal M(W_S),
    \]
    where 
    \[
    B_1=\left\{\mu\in\mathcal M(W_P),\ \max_{n\geq 0} j_R(|\mu| P_nQ)\leq \tJ(x)\right\}
    \text{ and  }B_2=\left\{\mu\in\mathcal M(W_R),\ j_R(|\mu|)\leq \tJ(x)\right\}.
    \]
    Note that $B$ is indeed stable by $\tP$, $\tR$ and $\tS$. In addition, Proposition~\ref{prop:propajs1} entails that $B_1$ is a Banach subspace of $\mathcal M(W_P)$.

    We first observe that 
    \begin{align*}
        \sum_{n=0}^\infty \VERT \tP^n\VERT\leq  \sum_{n=0}^\infty\ \theta_{0,R}^{-n}\sup_{\mu\in B_1,\ |\mu|(W_P)=1} \mu P_n W_P\leq  \frac{\theta_{0,R}}{\theta_{0,R}-\gamma}
    \end{align*}
    and $\Theta_n\leq \frac{\gamma^n/\theta_{0,R}^{n-1}}{\theta_{0,R}-\gamma}$ and $\theta_n\leq \gamma^n/\theta_{0,R}^n$ (using the notations of Proposition~\ref{prop:case2new}).

    By Lemma~\ref{lem:technicbis}, the operator $\tR:B_2\to B_2$ satisfies Assumption~(H) form Section~\ref{sec:app-2_new} with $\tJ_\tR=\tJ(x)$,
    \[
    E_\tR \mu=\sum_{i\in I_R} \mu(\11_{j_R(\cdot)=\tJ_\tR}\eta_{R,i})\,\nu_{R,i}
    \]
    and, using the fact that $\left\|\sum_{i\in I_R}\eta_{R,i}\nu_{R,i}(W_R)\right\|_{W_R}$ is finite,
    {\color{black}\[
    \alpha_{\tR,n}=C\left(\alpha_{R,n}+\frac{\11_{\tJ_\tR\geq 1}}{n}\right)
    \]
    for some constant $C>0$.}
    We thus deduce from Proposition~\ref{prop:case2new} that $\tS$ restricted to $B$ satisfies Assumption~(H) with $\tJ_\tS=\tJ(x)$, for all
    $\mu\in B$,
    \begin{align}
        E_\tS\mu&=E_\tR\mu+\sum_{k=0}^\infty E_\tR\tQ\tP^k\mu \notag\\
        &=\sum_{i\in I_R} \left(\mu(\11_{j_R(\cdot)=\tJ_\tR}\eta_{R,i})+\sum_{k=0}^\infty  \theta_{0,R}^{-k-1} \mu P^k
        Q(\11_{j_R(\cdot)=\tJ_\tR}\eta_{R,i})\right)\,\nu_{R,i} \notag\\
        &
        =\sum_{i\in I_R}\EE_\mu\left(\theta_{0,R}^{- T_{D_2}}\eta_{R,i}(X_{T_{D_2}})\11_{j_R(X_{T_{D_2}})=\tJ(x)}\right)\,\nu_{R,i}\label{eq:preuve-A2new}
    \end{align}
    and there exists a constant $C$ independent of $x\in D_1$ such that 
    \begin{align}
        \alpha_{\tS,n}&=\alpha_{{\tR},n}+ C \sum_{k=0}^{n-1} \alpha_{{\tR},n-k-1}\theta_{k}+ C\Theta_n+C\,\sum_{k=0}^{n-1} \frac{\tJ_{\tS}\,k}{n}\,\theta_k\nonumber\\
        &\leq C\sum_{k=0}^{n}\left(\alpha_{R,n-k}+\frac{\11_{\tJ(x)\geq 1}}{n-k+1}+\tJ(x)\frac{k}{n}\right)\left(\frac{\gamma}{\theta_{0,R}}\right)^k\label{eq:xdependentstep2}
        \\
        &\leq \alpha_{S,n}:=C\sum_{k=0}^{n}\left(\alpha_{R,n-k}+j_{0,R}\frac{k}{n}\right)\left(\frac{\gamma}{\theta_{0,R}}\right)^k,\nonumber
    \end{align}
    where $\alpha_{R,0}:=1$.  Since $\tS^n=\theta_{0,R}^{-n} S_n$, taking $\mu=\delta_x$ in~\eqref{eq:preuve-A2new}, we finally deduce that, for all $x\in D$ and all $f\in L^\infty(W_S)$,
    \begin{multline}
        \label{eq:finstep2new}
        \left|\theta_{0,R}^{-n}n^{-\tJ(x)}S_n f(x)- \sum_{i\in I_R} % \left[\eta_{R,i}(x)+\sum_{k=0}^\infty
        %\theta_{0,R}^{-k-1}\EE_x\left(\eta_{R,i}(X_{k+1})\11_{T_{D_2}=k+1,\ j_R(X_{k+1})=\tJ(x)}\right) \right]
        \EE_x\left(\theta_{0,R}^{- T_{D_2}}\eta_{R,i}(X_{T_{D_2}})\11_{j_R(X_{T_{D_2}})=\tJ(x)}\right)
        \,\nu_{R,i}(f)\right|\\
        \leq \alpha_{S,n}W_S(x)\|f\|_{W_S},
    \end{multline}
    where we extended $\tJ$ to $D_2$ by setting $\tJ(x):=j_R(x)$ if $x\in D_2$. % and $(x):=\max_{n\geq 0}j_R(\delta_x P^n Q)$ if $x\in D_1$.
    
    In order to conclude, it remains to prove that $\theta_{0,S}=\theta_{0,R}$ and that $j_S(x)=\tJ(x)$ for all $x\in D$. Inequality~\eqref{eq:finstep2new} with $f=\11_D$ implies 
    that $\theta_{S}(x)\leq \theta_{0,R}$ for all $x\in D$, so that $\theta_{0,S}\leq \theta_{0,R}$. Moreover, for all $x\in D_2$,
    $\theta_{S}(x)=\theta_{R}(x)$, and thus $\theta_{0,S}\geq \theta_{0,R}$. We deduce that $\theta_{0,S}=\theta_{0,R}$ and hence, using
    again~\eqref{eq:finstep2new} with $f=\11_D$, we deduce that $j_S(x)\leq \tJ(x)$ for all $x\in D$. On the one hand, for all $x\in D_2$,
    we have $j_S(x)=j_R(x)=\tJ(x)$. On the other hand, for $x\in D_1$, we observe that, for any $n\geq 0$ such that $\tJ(x)=j_S(\delta_x
    P_n Q)$, we have the inequality $\delta_x S^{n+1}\11_D\geq \delta_x P_n Q\11_D$, and hence
    \[
    j_S(x)=j_S(\delta_x S^{n+1})\geq j_S(\delta_x P_n Q)=\tJ(x).
    \]
    We thus proved that $j_S(x)\geq \tJ(x)$ for all $x\in D$, which concludes the proof of Theorem~\ref{thm:A2}.
\end{proof}

\subsection{\color{black} Case where $D_1$ is a critical sink ($\theta_{0,P}=\theta_{0,R}$)} 

\label{sec:D1criticalsink}

{\color{black} In this section, we consider the situation where the process evades $D_1$ and $D_2$  at the same pace. This is made precise in the following assumption, which will allow us to make use of Proposition~\ref{prop:case3new}.}

\medskip \noindent \textbf{Assumption (A3)} We have $j_{0,P}=0$, $j_{0,R}<+\infty$ and $\theta_{0,R}=\theta_{0,P}$. In addition, the process $X$ restricted to $D_1$ satisfies Assumption~(A) with  $\eta_{P}>0$, 
 and the process $X$ restricted to $D_2$ also satisfies Assumption~(A). Finally,
 \begin{equation}
   \label{eq:hyp-A3}
   \E_x(W_R(X_1))\leq W_P(x),\ \forall x\in D_1,   
 \end{equation}
and there exists $\ell_*\in\ZZ_+$ such that, for all $x\in D_1$ and all $i\in I_P$,
\begin{gather}
\label{eq:definelstar}
\P_x\left(j_R(X_{T_{D_2}})\leq \ell_*\text{ and }T_{D_2}<+\infty\right)=\P_x(T_{D_2}<+\infty)\\
\text{and}\quad \P_{\nu_{P,i}}\left(j_R(X_{T_{D_2}})=\ell_*
\text{ and }
 \eta_R(X_{T_{D_2}})>0 \text{ and }T_{D_2}<+\infty\right)>0, 
\label{eq:definelstar-bis}
\end{gather}
where we recall that $\eta_R=\sum_{k\in I_R}\eta_{R,k}$.
Note that $\ell_*\leq j_{0,R}$.

\medskip

\begin{rem}
	In the above Assumption~(A3), the assumptions $j_{0,P}=0$ and the fact that~\eqref{eq:definelstar} is satisfied for all $x\in D_1$ may seem restrictive
	conditions. However, we will see in Section~\ref{sec:finite-classes} that, applying this property inductively in a precise
        order, this is sufficient to obtain Condition~(A) with non-zero $j_{0,S}$ in cases with a finite or denumerable number of communication classes.
	\erem
\end{rem}

\medskip

{\color{black} We emphasize that Remark~\ref{rem:A1withconstants} also applies to Assumption~(A3).  The following theorem states that Assumption~(A2) implies Assumption~(A), with explicit parameters. In this situation the limiting distribution of the process starting from $D_1$ only charges $D_2$.}

\medskip

\begin{thm}
	\label{thm:A3}
	Assume that Assumption~(A3) holds true. Then $X$ satisfies Assumption~(A) with $W_S=W_P+W_R$.
        Moreover, there exists a constant $C>0$, independent of $x\in D$ and $n\in\NN$, such that
	 we have $\theta_{0,S}=\theta_{0,R}=\theta_{0,P}$, %, assuming that $\ell_*$ is the smallest integer satisfying~\eqref{eq:definelstar},
		\[
		j_S(x)=\begin{cases}
   		1+\ell_*&\text{ for all }x\in D_1,\\
		j_R(x)&\text{ for all }x\in D_2,
		\end{cases}
		\]
where $\ell_*$ is defined in~\eqref{eq:definelstar}--\eqref{eq:definelstar-bis},  $I_S=I_R$,
		for all $i\in I_R$, $\nu_{S,i}=\nu_{R,i}$,
\[
\eta_{S,i}(x)=\eta_{R,i}(x)+\frac{\theta_{0,P}^{-1}}{1+\ell_*}\sum_{k\in I_P}\eta_{P,k}(x)
\EE_{\nu_{P,k}}\left(\eta_{R,i}(X_1)\11_{j_R(X_1)=\ell_*}\right),\quad\forall x\in D,
\]
and
		\[
		\alpha_{S,n}=\frac{C}{n}\left(\ell_*+\sum_{k=0}^n \left(\alpha_{P,k}+\alpha_{R,k}\frac{k^{\ell_*}}{n^{\ell_*}}\right)\right),
              \]
              with the convention that $\alpha_{P,0}=\alpha_{R,0}=1$.
\end{thm}

\begin{rem}
	In the conclusion of the last theorem, even if $\alpha_{P,n}$ and $\alpha_{S,n}$ converge geometrically to $0$,
	$\alpha_{S,n}$ only converges to $0$ in $O(1/n)$. 
	%Similarly, in conclusion~(i) (resp. (ii)), if
	%$j_P$ (resp. $j_R$) is not identically equal to 0, then the convergence rate is $O(1/n)$. 
	\erem
\end{rem}

\begin{proof}[Proof of Theorem~\ref{thm:A3}]
	As in the proof of the two previous results, we define 
	 the linear operators $\tP:{\cal M}(W_P)\to {\cal M}(W_P)$, $\tQ:{\cal M}(W_P)\to {\cal M}(W_R)$ and $\tR:{\cal M}(W_R)\to
         {\cal M}(W_R)$  by
	\begin{align*}
	\tP \mu=\theta_0^{-1}\mu P_1,\quad \tQ \mu = \theta_0^{-1}\mu Q,\quad\text{and }\tR \mu = \theta_0^{-1}\mu R_1,
	\end{align*} 
	where  $\theta_0=\theta_{0,P}=\theta_{0,R}$.
Assumption (A3) entails that all these operators are bounded. Our aim is to apply Proposition~\ref{prop:case3new} to $\tS:{\cal M}(W_S)\to {\cal M}(W_S)$, where ${\cal M}(W_S)\equiv {\cal M}(W_P)\oplus {\cal
          M}(W_R)$, with $W_S=W_R+W_P$ and $\tS=\tP+\tQ+\tR$. Beware that $\tP,\tR,\tQ,\tS$ act on the left on $\mu$ while $P_n,R_n,Q,S_n$ act on the right, so that, for instance, $\tR \tP\mu=\theta_0^{-2}\mu P_1 R_1$.

 If $x\in D_2$, then~\eqref{eq:eta} holds true with $I_S=I_R$ and $\eta_{S,i}(x)=\eta_{R,i}(x)$,  $j_S(x)=j_R(x)$ and $\alpha_{S,n}=\alpha_{R,n}$.

 Fix $x\in D_1$. We consider $\tP$, $\tR$ and $\tS$ restricted to the Banach space
    \[
 B=B_1\oplus B_2\subset \mathcal M(W_S),
 \]
 where 
 \[
 B_1=\mathcal M(W_P)
 \text{ and  }B_2=\left\{\mu\in\mathcal M(W_R),\ j_R(|\mu|)\leq \ell_*\right\}
 \]
 Note that it follows {\color{black}from} Proposition~\ref{prop:propajs1} and the assumption that 
 {\color{black}\[
 \PP_X(j_R(X_{T_{D_2}})\leq \ell_*\text{ and }T_{D_2}<\infty)=\PP_X(T_{D_2}<\infty),
 \]} that $\tQ
 B_1\subset B_2$, and from the rest of Assumption~(A3) that $\tP:B_1\to B_1$,
 $\tR:B_2\to B_2$ and $\tQ:B_1\to B_2$ are bounded operators.

   As in the previous step, the operator $\tR$ satisfies Assumption~(H) in the Appendix with $\tJ_\tR=\ell_*$, 
   \[
   E_\tR \mu=\sum_{i\in I_R} \mu(\11_{j_R(\cdot)=\ell_*}\eta_{R,i})\,\nu_{R,i},
   \] 
   and
  \[
  \alpha_{\tR,n}=C\left(\alpha_{R,n}+\frac{\11_{\ell_*\geq 1}}{n}\right),
  \]
  for some constant $C>0$.
Moreover, Assumption~(A) for $P_n$ implies that $\tP$ satisfies Assumption~(H) from Section~\ref{sec:app-2_new} with $\tJ_\tP=0$, $E_\tP\mu=\sum_{j\in I_P}\mu(\eta_{P,j})\nu_{P,j}$ and $\alpha_{\tP,n}=\alpha_{P,n}$.
We conclude from Proposition~\ref{prop:case3new} that $\tS$ satisfies Assumption~(H) with $\tJ_\tS=1+\ell_*$,
\[
E_\tS\mu=\frac{1}{\tJ_{\tS}}E_{\tR}{\tQ}E_{\tP}\mu=\frac{\theta_{0,P}^{-1}}{1+\ell_*}\sum_{i\in I_R}\sum_{k\in I_P}\mu(\eta_{P,k}) \EE_{\nu_{P,k}}\left(\11_{j_R(X_1)=\ell_*}\eta_{R,i}(X_1)\right)\nu_{R,i},
\]
and
\begin{align*}
\alpha_{\tS,n}&=\frac{C}{n}\left( %\alpha_{{\tP},n}+ C 
\tJ_{\tR}+\sum_{k=0}^n \alpha_{{\tP},k}+  \left(\max_{k\geq 0}\alpha_{{\tP},k}+1\right)\sum_{k=0}^{n} \alpha_{{\tR},n-k} \left(\frac{n-k}{n}\right)^{\tJ_\tR}\right)\\
& 
\leq \alpha_{S,n}:=\frac{C}{n}\left(\ell_*+\sum_{k=0}^n \left(\alpha_{P,k}+\alpha_{R,k}\frac{k^{\ell_*}}{n^{\ell_*}}\right)\right),
\end{align*}
with $\alpha_{P,0}=\alpha_{R,0}=1$.
Since $\tS^n\mu=\theta_{0,P}^{-n}\mu S_n$, we deduce that, for all $x\in D_1$ and all $f\in L^\infty(W_S)$,
\begin{multline}
\label{eq:finstep3}
\left|\theta_{0,P}^{-n} n^{-(1+\ell^*)} S_n f(x)-%\left(
\frac{\theta_{0,P}^{-1}}{1+\ell_*}\sum_{i\in I_R}\sum_{k\in I_P}\eta_{P,k}(x) \EE_{\nu_{P,k}}\left(\11_{j_R(X_1)=\ell_*}\eta_{R,i}(X_1)\right)\nu_{R,i}(f)
%\eta_P(x) \EE_{\nu_P}\left(\11_{j_R(X_1)=\ell_*}\eta_R(X_1)\right)\right)\nu_R(f)
\right|\\
\leq \alpha_{S,n}W_S(x)\|f\|_{W_S}.
\end{multline}
This implies that $\theta_{S}(x)\leq \theta_{0,P}$ for all $x\in D_1$, so $\theta_{0,S}\leq
\theta_{0,P}\vee\theta_{0,R}=\theta_{0,P}=\theta_{0,R}$. Conversely, since $S f\geq R f$ for all positive $f$, we have
$\theta_{0,S}\geq \theta_{0,R}$. We thus deduce that $\theta_{0,S}=\theta_{0,P}=\theta_{0,R}$. We also have, by definition of $S$,
$j_S(x)=j_R(x)$ for all $x\in D_2$. Moreover, for all $x\in D_1$,~\eqref{eq:finstep3} implies that $j_S(x)\leq 1+\ell^*$.
%Note that, up to now, we have neither used the assumption
%that $\eta_P>0$, nor the second part of~\eqref{eq:definelstar}. Therefore, we have proved Corollary~\ref{cor:A3}.%}

It remains to prove that $j_S(x)\geq 1+\ell_*$ for all $x\in D_1$. 
    Fix $x\in D_1$ until the end of the proof. Since $\eta_{P}(x)>0$, we
  deduce from Proposition~\ref{prop:mainbis}~(i) that
\begin{align}
    \label{eq:defnu}
\nu:=\frac{1}{\eta_P(x)}\sum_{k\in I_P}\eta_{P,k}(x)\nu_{P,k}
\end{align}
is a quasi-stationary distribution for the semigroup $(P_n)_{n\geq 0}$ with exponential convergence parameter $\theta_{0,P}$. Let us
first prove that $j_R(\nu Q)=\ell_*$. Since $(R_n)_{n\geq 0}$ satisfies (A), we have for all $y\in D_2$ such that $j_R(y)\leq\ell_*$
\[
\theta_{0,R}^{-n}n^{-\ell^*}\delta_y R_n\11_{D_2}\xrightarrow[n\rightarrow+\infty]{}
\begin{cases}
  \sum_{i\in I_R}\eta_{R,i}(y) & \text{if $j_R(y)=\ell^*$,} \\ 0 & \text{if $j_R(y)<\ell_*$,}
\end{cases}
\]
where the convergence holds in $L^\infty(W_R)$. Therefore,
\begin{equation}
  \label{eq:youpi}
\theta_{0,R}^{-n}n^{-\ell^*}\nu Q R_n\11_{D_2}\xrightarrow[n\rightarrow+\infty]{}\sum_{i\in I_R}\nu Q\left(\eta_{R,i}\11_{j_R(\cdot)=\ell_*}\right).  
\end{equation}
Now, using that $\nu$ is a quasi-stationary distribution,
\begin{align*}
  \E_\nu\left(\eta_{R}(X_{T_{D_2}}) \11_{j_R(X_{T_{D_2}})=\ell_*}\11_{T_{D_2}<\infty}\right) & =\sum_{m=0}^{+\infty}\nu P_m Q \left(\eta_R(\cdot)\11_{j_R(\cdot)=\ell_*}\right)
  \\ & =\frac{1}{1-\theta_{0,P}}\nu Q \left(\eta_R(\cdot)\11_{j_R(\cdot)=\ell_*}\right).
\end{align*}
{\color{black}By~\eqref{eq:definelstar-bis} in Assumption~(A3) and given the definition of~$\nu$ in~\eqref{eq:defnu},} the left-hand side is positive, so we have proved that
$\theta_{0,R}^{-n}n^{-\ell^*}\nu Q R_n\11_{D_2}$ converges to a positive limit. This shows that $j_R(\nu Q)=\ell_*$.

For all $n\geq 1$, using the fact that $S_n=P_n+R_n+\sum_{k=0}^{n-1} P_{n-k-1}QR_k$, % and $\nu
                                % P_n=\theta_0^n \nu$, 
we have
\begin{align*}
n^{-(\ell_*+1)}\theta_0^{-n}\nu S_n\11_D&\geq n^{-(\ell_*+1)}\theta_0^{-n}\sum_{k=0}^{n-1} \nu P_{n-k-1}QR_{k}\11_{D_2}\\
&=n^{-(\ell_*+1)}\theta_0^{-1}\sum_{k=1}^{n-1} \theta_0^{-k}\nu QR_{k}\11_{D_2}\\
&=n^{-(\ell_*+1)}\theta_0^{-1}\sum_{k=1}^{n-1} k^{\ell_*}\left[k^{-\ell_*}\theta_0^{-k}\nu QR_{k}\11_{D_2}\right].
\end{align*}
Using that, by~\eqref{eq:youpi},
$k^{-\ell_*}\theta_0^{-k}\nu QR_{k}\11_{D_2}$ converges to a positive limit when $k\to+\infty$, we deduce that
\[
\liminf_{n\rightarrow+\infty}n^{-(\ell_*+1)}\theta_0^{-n}\nu S_n\11_D>0.
\]
This shows that $j_S(\nu)\geq \ell_*+1$. {\color{black}In addition,} we have $\theta_0^{-n}\delta_x P_n\to \sum_{k\in I_P}\eta_{P,k}(x)\nu_{P,k}=\eta_P(x)\nu$ in
$\mathcal M(W_S)$ when $n \to+\infty$. Since $\eta_{P}(x)>0$, we deduce from the lower semi-continuity of $j_S$ (see
Proposition~\ref{prop:propajs1}) that
\[
\liminf_{n\to+\infty} j_S(\theta_0^{-n}\delta_x P_n)\geq j_S(\eta_P(x)\nu)= j_S(\nu)=\ell_*+1.
\]
Using again Proposition~\ref{prop:propajs1}, we have $j_S(x)=j_S(\theta_0^{-n}\delta_x S_n)\geq j_S(\theta_0^{-n}\delta_x P_n)$ for
all $n\geq 0$. So we finally deduce that
\[
j_S(x)=\liminf_{n\to+\infty} j_S(\theta_0^{-n}\delta_x S_n)\geq \liminf_{n\to+\infty} j_S(\theta_0^{-n}\delta_x P_n)\geq j_S(\nu_P)=\ell_*+1.
\]
This concludes the proof of Theorem~\ref{thm:A3}.
\end{proof}

\section{Reducible state spaces with several communication classes}
\label{sec:finite-classes}

%{\color{red}Remplacer $E_0$ par $E_\emptyset$}

Our goal is to study  quasi-stationary distributions on general reducible state spaces, a situation which naturally leads to non-zero polynomial convergence parameters. In particular, we extend the results of~\cite[Section~6.2]{ChampagnatVillemonais2017b}, which are stated under conditions ensuring that the polynomial convergence parameter of the process vanishes.
We refer the reader
to~\cite{DoornPollett2008,DoornPollett2009}  where an in-depth study of the quasi-stationary distributions on finite reducible state spaces has been conducted (see also the survey~\cite{DoornPollett2013}, an earlier work~\cite{Mandl1959} summarized in \cite[Section~9]{darroch-seneta-65}, and the more recent works~\cite{ChampagnatDiaconisEtAl2012,BenaimCloezEtAl2016}). The quasi-stationary distribution of particular processes on reducible state spaces with finitely many communication classes have also previously been studied in \cite{Ogura1975} (for multi-type Galton-Watson processes), \cite[Section~3]{Gosselin2001} (for discrete state space processes, under conditions ensuring that the polynomial convergence parameter vanishes),  and~\cite{ChampagnatRoelly2008} (for multitype
Dawson-Watanabe processes).

We consider a Markov process $X$ with semigroup $S$ on a general state space $D$ that can be decomposed into finitely many disjoint sets $E_\emptyset,E_1,E_2,\ldots,E_k$, where $k\geq 1$. We denote, for
all $i\in\{\emptyset,1,\ldots,k\}$, by $Y^{(i)}$ the process
\[
Y^{(i)}_n=
\begin{cases}
  X_n & \text{if }n<T_{\cup_{j\neq i}E_j\,\cup\,\{\d\}}, \\
  \d & \text{otherwise}
\end{cases}
\]
and define by $\theta_{0,i}$ its exponential convergence parameter.  The process $Y^{(i)}$ is called the process $X$ restricted to $E_i$. More generally, for all $M\subset D$, we call process $X$
restricted to $M$ the process $X$ killed after its first exit time from $M$. 

{\color{black}
  We introduce a set of assumptions ensuring that the classes $E_1,\ldots,E_k$ all have the same exponential convergence parameter and
that class $E_\emptyset$ has a smaller exponential convergence parameter but satisfy less stringent assumptions.

% {\color{black}  [Denis: J'ai créé rapidement 4 hypothèses pour voir à quoi cela ressemblait, mais il faudrait les regrouper différemment pour que ce soit plus cohérent.]}

\textbf{Assumption (B1).} We  assume that, for all $i\in\{1,\ldots,k\}$,
the process $Y^{(i)}$ satisfies Assumption~(A) with the objects  $\theta_{0,i}$,
 $j_i$, $\alpha_{i,n}$, $W_i$,
$I_i=\{1\}$, $\nu_i$ and $\eta_i$ (note that we omit the second index for $\eta_{i,1}$
and $\nu_{i,1}$). We also assume that $j_i\equiv 0$, $\eta_{i}>0$ on $E_i$
and
 \begin{align}
 \label{eq:ass1}
 \theta_{0,i}=\bar \theta
 \end{align}
for some constant $\bar\theta$ independent of $i\in\{1,\ldots,k\}$.
}

\medskip
We emphasize that many references provide practical criteria to check Assumption (A) with $I_S=\{1\}$, $j_S\equiv 0$, $\eta_S>0$
and with $\alpha_{S,n}$ converging exponentially fast to $0$, which corresponds to the classical irreducible situation,
{\color{black} see~\cite{GongQianEtAl1988,GuillinNectouxEtAl2021,LelievreRamilEtAl2021,BenaimChampagnatEtAl2021} for diffusion processes,
~\cite{ChampagnatVillemonais2016b,ChampagnatVillemonais2017b,ChampagnatVillemonais2019,BansayeCloezEtAl2019} for general criteria
based on semi-group arguments,  \cite{FerreRoussetStoltz2018,HinrichKolbEtAl2018,GuillinNectouxEtAl2020} for general criteria based
on regularity properties of the semigroup.}

{\color{black}
The following assumption ensures that the sets $E_1,\ldots,E_k$ behave like communication classes.

\textbf{Assumption (B2).} We assume that the set $\{1,\ldots,k\}$ can be equipped with a partial strict order~$\prec$ such that $i\prec j$ if and only if
$E_i$ is accessible from $E_j$ in the sense that: for all $i,j\in \{1,\ldots,k\}$,
if $i\prec j$, then
\begin{equation}
  \label{eq:accessible-reducible}
\forall x\in E_j,\quad \PP_x(T_{E_i}<+\infty)>0
\end{equation}
and, if $i\not\prec j$, then
\[
\forall x\in E_j,\quad \mathbb{P}_x(T_{E_i}<+\infty)=0.
\]
and
 \[
 \forall x\in E_j,\quad \mathbb P_x(\exists n\geq T_{E_j^c}\text{ such that }X_n\in E_j)=0,
 \]   
 where $E_j^c=D\cup\{\partial\}\setminus E_j$.

\medskip

Our next assumption states that the exit time from the set $E_\emptyset$ (which may not, in general, satisfy the properties of other classes
given in Assumptions (B1) and (B2)) is smaller than the exit time from the sets $E_1,\ldots,E_k$.

\textbf{Assumption (B3).}
We assume that there exists $\gamma<\bar \theta$  and a function $W_\emptyset\geq 1$  such that, for all $x\in E_\emptyset$ and for some constant $C>0$,
\begin{align}\label{as:weakweakA}
	\E_x(W_\emptyset(Y^{(\emptyset)}_n))\leq C\gamma^n W_\emptyset(x),\ \forall n\geq 0.
\end{align}

\medskip

Our last assumption gives a consistency property between the functions $W_i$ when the process jumps from a class $E_i$ to another.

\textbf{Assumption (B4).}
We set $W=W_\emptyset+\sum_{i=1}^k W_i$ and assume that there exists a constant $C_W>0$ such that
\begin{align}
	\label{eq:appli1W}
	\E_x(W(X_1))\leq C_W W(x), \quad\forall x\in D.
\end{align}
}

To state the main result of this section, we introduce the following notations. We define the set
\[
F_0:=\left\{\text{minimal elements in }\{1,\ldots,k\}\text{ for the partial order }\prec\right\}.
\]
  and, for all $\ell\geq 0$, we define by induction
\[
F_{\ell+1}:=\left\{\text{minimal elements in }\{1,\ldots,k\}\setminus\left(F_0\cup\ldots\cup F_\ell\right)\text{ for the order }\prec\right\}.
\]
We denote by $\bar\ell$ the non-negative integer such that $F_\ell=\emptyset$ iff $\ell>\bar{\ell}$.
% for some $\bar{\ell}<+\infty$. 
For all $x\in \bigcup_{i=1}^k E_i$, we define
$\text{index}(x)$ as the unique $\ell\in\{0,\ldots,\bar\ell\}$ such that $x\in E_i$ for some $i\in F_\ell$.
For all $x\in E_\emptyset$, we also define
\[
\text{index}(x)=\max\{\ell\geq 0\text{ such that }\exists i\in F_{\ell},\ \mathbb P_x(T_i<\infty)>0\},
\]
with $\max \emptyset = -1$.

%{\color{gray} For all $i_0\in\{1,\ldots,k\}$, we define the set of paths from $i_0$ to $\overline{F}$ as
%\[
%\mathcal{P}_{i_0}=\left\{(i_0,\ldots,i_p),\ p\geq 0,\ i_p\in\overline{F},\ i_{\ell-1} \succ i_\ell\ \forall \ell\in\{1,\ldots,p\}\right\}
%\]
%and the set $\overline{\mathcal{P}}_{i_0}$ of complete paths from $i_0$ to $\overline{F}$ as the set of maximal elements of
%$\mathcal{P}_{i_0}$ for the partial order induced by the inclusion of paths. We then define
%\begin{equation}
%  \label{eq:def-j_i}
%  j_{i_0}:=\max_{(i_0,\ldots,i_p)\in \overline{\mathcal{P}}_{i_0}}\#\left(\{i_0,\ldots,i_{p-1}\}\cap\overline{F}\right),  
%\end{equation}
%with the convention that $\max\emptyset=0$.}
%\den{ici la définition de $\overline{\mathcal{P}}_{i_0}$ ne me paraît pas nécessaire, et à vrai dire elle est plutôt inappropriée avec la nouvelle définition de $\prec$; en revanche, la définition de $j_{i_0}$ reste correcte, en remplaçant $\overline{\mathcal{P}}_{i_0}$ par ${\mathcal{P}}_{i_0}$}

\begin{thm}
  \label{thm:finite-classes}
  {\color{black}Under Assumptions (B1), (B2), (B3) and (B4),} the process $X$ satisfies Condition~(A) with 
  \begin{gather*}
    \theta_{0,S}=\bar{\theta},\quad I_S= F_0,\quad W_S=W_\emptyset+\sum_{i=1}^k W_i, \\
    j_S(x)=\text{\emph{index}}(x)\vee 0,\text{ for all }x\in D,
  \end{gather*}
  and, for all $i\in I_S$,
  \begin{gather}
  \label{eq:thmfinite1}
  \nu_{S,i}\propto \nu_i+\sum_{\ell\geq 0}\theta_{0,S}^{-\ell-1}\P_{\nu_i}(X_{1}\not\in E_i,\ X_{\ell+1}\in\cdot), \\
    \label{eq:thmfinite2}
  \eta_{S,i}(x)>0\text{ for all }x\in D\text{ with }\mathbb P_x(T_{E_i}<\infty)>0
  \end{gather}
  and
  \begin{align}
    \label{eq:thmfinite3}
  \eta_{S,i}(x)=0\text{ for all }x\in D\text{ with }\mathbb P_x(T_{E_i}<\infty)=0.
  \end{align}
\end{thm}

\begin{rem}
  In this theorem, the functions $\eta_{S,i}$ can also be expressed in terms of the parameters of the problem, since they are constructed in the proof below
  with an inductive argument, with explicit expressions at each step.
  \erem
\end{rem}

\begin{rem}
  \label{rem:1/n}
  Similarly, the speed of convergence $\alpha_{S,n}$ is also constructed expli\-ci\-te\-ly with an inductive argument in the proof below.
  In particular, if it is assumed that $\alpha_{i,n}$ converges exponentially fast to 0 for all $i$ such that
  $\theta_{0,i}=\bar\theta$, one can easily check that $\alpha_{S,n}$ also converges to 0 exponentially fast if $j_S\equiv 0$, and
  converges to 0 polynomially in $O(1/n)$  otherwise.
  \erem
\end{rem}

\begin{rem}
\label{rem:recursif-QSD}
It follows from the last theorem and Corollary~\ref{cor:corQSD} that the set of quasi-stationary distributions $\nu$ for $X$ such
that $\nu(W_S)<+\infty$ and $\nu(\eta_S)>0$ has dimension $\# {F}_0$ and is spanned (in the sense of convex hulls) by $\nu_{S,i}$,
$i\in {F}_0$. Our result also allows to characterize all quasi-stationary distributions $\nu$ of $X$ such that $\nu(W_S)<\infty$: one
can obtain the other quasi-stationary distributions by applying Theorem~\ref{thm:finite-classes} (assuming its assumptions are
satisfied) to the process $X$ restricted to the subset of $E_\emptyset$ composed of points from which $E_1\cup\ldots\cup E_k$ is not
accessible, i.e.\ $\{x\in E_\emptyset,\ \text{index}(x)=-1\}$, and by proceeding recursively. All the new quasi-stationnary
distributions obtained this way have an exponential convergence parameter (strictly) smaller than $\bar{\theta}$. {\color{black} This way of
enumerating quasi-stationary distributions is related to the enumeration of equilibria in the epidemic model
of~\cite{delmas-et-al-2025} using what they call ``supercritical antichains''}.

{\color{black} In the particular case where the state space $D$ is finite, our result are thus reminiscent of~\cite{DoornPollett2009} (see in particular Theorems~4.3 and~5.1 therein). These results are already quite complete, and one of our main contributions to the problem in the finite state space situation is to determine explicitely the polynomial convergence parameter associated to each communication class, and to emphasize the support of the functions $\eta_{S,i}$.}
\erem
\end{rem}

%\begin{rem}
%  In cases where some of the $\eta_i$, $i\in \bar F$, is not positive on its domain $E_i$, one could obtain a similar result using
%  Corollary~\ref{cor:A3} instead of Theorem~\ref{thm:twosets}~(iii) in the proof below. For this,
%  Assumption~\eqref{eq:accessible-reducible} needs to be replaced by
%  \[
%  \forall i,j\in\{1,\ldots,k\}\text{ such that }i\prec j,\ \forall x\in E_j,\quad \sup_{n\geq 0}\E_x(\eta_i(X_n))>0
%  \]
%  to make the induction argument used in the proof below work. Then, one  obtains the same expressions for $\eta_{S,i}$ and
%  $\nu_{S,i}$ for the process $X$ in~\eqref{eq:eta}, but with a non-optimal exponent $j$ instead of $j_S$. However, as noticed in
%  Remark~\ref{rem:cor-A3}, we have $j(x)=j_S(x)$ for all $x$ such that $\eta_{S}(x)=\sum_{i\in I_S}\eta_{S,i}>0$. It is easy to check
%  from the proof below that $\eta_S(x)>0$ for all $x\in D$ such that there exists $i\in\overline{F}_0$ satisfying $\P_x(T_{E_i}<\infty,
%  \eta_i(X_{T_{E_i}})>0)>0$. 
%  \erem
%\end{rem}

\begin{rem}
  {\color{black} The above result also allows to study reducible processes with denumerably many communication classes. In particular,
    $E_\emptyset$ may contain infinitely many communication classes. In particular, our proof applies to cases where there exists a
    denumerable sequence $(E_i)_i\geq 1$ satisfying (B1), (B2) and (B4) such that $F_\ell$ is a finite set for all $\ell\geq 0$.}
  \erem
\end{rem}

%\begin{rem}
%	When a set $D_\emptyset$ can be decomposed into finitely many communication classes $D_1,D_2,\ldots,D_\ell$ such that, on each of them the process satisfies (B3) with some constant $\gamma_\emptyset$ (instead of $\gamma$) and function $W_\emptyset$, then the process restricted to $D_\emptyset$ also satisfies (B3) with any $\gamma>\gamma_\emptyset$ and with the same function $W_\emptyset$. See Lemma~\ref{lem:stackedE0} at the end of this section for a precise statement when $\ell=2$ (the extension to general $\ell$ is immediate).
%\end{rem}

\begin{proof}[Proof of Theorem~\ref{thm:finite-classes}]
    
  In what follows, we set, for all index value $\ell\in\{-1,0,1,,\ldots,\bar\ell\}$,
  \begin{align*}
      E_\emptyset^{(\ell)}:=\{x\in E_\emptyset,\text{ such that index}(x)=\ell\}.
  \end{align*}
  The proof is based on an induction argument, based on a specific decomposition of the state space $D\cup\{\d\}$ into an increasing
  sequence of closed subsets, as defined below. We call a subset $\bar{D}$ of $D\cup\{\d\}$ a closed set if for all $x\in D$,
  $\mathbb{P}_x(\exists n\geq 0,\ X_n\not\in \bar D)=0$. We first observe that $E_\emptyset^{(-1)}\cup\{\d\}$ is closed and, by Assumption (B2)
  and the definition of $F_0$,
  \begin{equation}
    \label{eq:set-0}
    S_0:=\bigcup_{i\in F_0} E_i\cup E^{(-1)}_\emptyset\cup\{\d\}
  \end{equation}
  is also closed. Similarly, the sets
  \begin{equation}
    \label{eq:set-1}
    S_n:=\bigcup_{\ell=0}^{n}\bigcup_{i\in F_\ell} E_i \cup \bigcup_{k=-1}^{n-1} E_\emptyset^{(k)}\cup\{\d\}
  \end{equation}
  for all $n\in\{1,\ldots,\bar\ell\}$, and
  \begin{equation}
    \label{eq:set-2}
    S'_n:=\bigcup_{\ell=0}^{n}\bigcup_{i\in F_\ell} E_i \cup \bigcup_{k=-1}^{n} E_\emptyset^{(k)}\cup\{\d\}
  \end{equation}
  for all $n\in\{0,\ldots,\bar\ell\}$, are also closed. Below, we prove by induction that the following property is true on any of
  the previous sets. Given a closed subset $\bar{D}$ of $D\cup\{\d\}$, we say that property (P) is satisfied on $\bar D$ if the
  semi-group $R$ of the process $X$ restricted to $\bar D$ satisfies Assumption (A) with $\theta_{0,R}=\bar\theta$, $I_{R}=F_0$,
  $W_R$ the restriction of $W_S$ to $\bar D\setminus\{\d\}$, $j_R(x)=\text{index}(x)\vee 0$ for all $x\in \bar D\setminus\{\d\}$, for
  all $i\in I_R$,
  \begin{gather*}
    \nu_{S,i}\propto \nu_i+\sum_{\ell\geq 0}\theta_{0,S}^{-\ell-1}\P_{\nu_i}(X_{1}\not\in E_i,\ X_{\ell+1}\in\cdot), \\
    \eta_{S,i}(x)>0\text{ for all }x\in \bar D\text{ with }\mathbb P_x(T_{E_i}<\infty)>0
  \end{gather*}
  and
  \[
  \eta_{S,i}(x)=0\text{ for all }x\in \bar D\text{ with }\mathbb P_x(T_{E_i}<\infty)=0.
  \]
  This will prove Theorem~\ref{thm:finite-classes} since, by definition of $\bar\ell$, $D=S'_{\bar\ell}$.
  % \[
  %   D=\bigcup_{\ell=0}^{\bar\ell}\bigcup_{i\in F_\ell} E_i \cup \bigcup_{\ell=-1}^{\bar\ell} E_0^{(\ell)}.
  % \]  
   % We prove the result by induction on $\bar\ell$. In Step~1, we prove that the result holds true for the process restricted to the set $\cup_{i\in \overline F_0} E_i\cup E_0^{(-1)}$. In Step~2, we prove that, if the result holds true for some value of $\bar\ell\geq 0$ when the set $E_0^{\bar \ell}$ is empty, then it holds true when $E_0^{(\bar\ell)}$ is non-empty. In Step~3,  we assume that the result holds true for some value $\bar\ell=n\geq 0$, and show that the result holds true when  $\bar \ell=n+1$ with $E_0^{n+1}$ empty.
  
  \medskip\textbf{Step 1. Proof that (P) is satisfied on the set $S_0$.}
  Our aim is to apply Theorem~\ref{thm:A1}  with 
  \[
  D_1=\cup_{i\in F_0} E_i\text{ and }D_2=E_\emptyset^{(-1)}.
  \]
  % Note that, by Assumption~(B2) and the definitions of $\overline F_0$ and  $E_0^{(-1)}$,  $D_1\cup D_2\cup\{\partial\}$ is closed,  and that, by definition of $E_0^{(-1)}$, $D_2\cup\{\partial\}$ is closed too.
  In what follows, we set $W_R=W\11_{D_2}$ and $W_P=C_W W\11_{D_1}$. According to~\eqref{eq:appli1W} in Assumption~(B1), we have, for all $x\in D_1$,
  \[
  \mathbb E_x(W_R(X_1))\leq\mathbb E_x(W(X_1))\leq C_W W(x)=W_P(x),
  \]
  so that the first part of~\eqref{eq:A1} holds true. In addition, since $D_2$ is closed, we have, for all $x\in D_2$, 
  \begin{align*}
      \mathbb E_x(W_R(X_n))=\mathbb E_x(W_\emptyset(Y^{(\emptyset)}_n))\leq C\gamma^n W_\emptyset(x)=C\gamma^n W_R(x),
  \end{align*}
where we used~\eqref{as:weakweakA} from Assumption~(B3) for the last inequality.  Hence the second part  of~\eqref{eq:A1} holds true.

Finally, for all $i\in F_0$ and $x\in E_i$, we have by Assumption~(B1)
\begin{align*}
      \left|\bar\theta^{-n} \EE_x(f(X_n)\11_{n< T_{\cup_{j\neq i}E_j\cup\{\partial\}}})- \eta_{i}(x)\nu_{i}(f)\right|\leq \alpha_{i,n} W_i(x)\|f\|_{W}.
\end{align*}
Since $D_1\cup D_2\cup\{\partial\}$ is closed, we deduce that this reduces to
\begin{align*}
    \left|\bar\theta^{-n} \EE_x(f(X_n)\11_{n< T_{\cal A}})- \eta_{i}(x)\nu_{i}(f)\right|\leq \alpha_{i,n} W_i(x)\|f\|_{W},
\end{align*}
where
\[
  {\cal A}:={\cup_{j\in  F_0,j\neq i}E_j\cup D_2\cup \{\partial\}}.
\]
Since in addition, by definition of $F_0$,  $j\not\prec i$ for all $i\neq j\in  F_0$, we deduce that
\begin{align*}
    \left|\bar\theta^{-n} \EE_x(f(X_n)\11_{n< T_{D_2\cup \{\partial\}}})- \eta_{i}(x)\nu_{i}(f)\right|\leq \alpha_{i,n} W_i(x)\|f\|_{W}.
\end{align*}
Summing over $i\in F_0$, we conclude that, for all $x\in D_1$,
\begin{align*}
    \left|\bar\theta^{-n} \EE_x(f(X_n)\11_{n< T_{D_2\cup \{\partial\}}})- \sum_{i\in F_0} \eta_{i}(x)\nu_{i}(f)\right|\leq {\color{black}\sum_{i\in F_0} \alpha_{i,n} W_i(x)\|f\|_{W}.}
\end{align*}
In particular, the process restricted to $D_1$ satisfies Assumption~(A) with $j_{0,S}=0$. We conclude that Assumption~(A1) holds true and hence, according to Theorem~\ref{thm:A1}, that the process restricted to $D$ satisfies Assumption~(A) with $W_S=W\11_{D_2}+C_W W\11_{D_1}$, $j_S\equiv 0$, $I_S=F_0$,
\begin{align*}
    \nu_{S,i}\propto \nu_i+\sum_{\ell\geq 0}\theta_{0,S}^{-\ell-1}\P_{\nu_i}(X_{1}\not\in E_i,\ X_{\ell+1}\in\cdot),
\end{align*}
and, for all $i\in I_S$, $\eta_{S,i}(x)>0$ if and only if $x\in E_i$ with $i\in F_0$. This proves that property (P) is satisfied on
the set $S_0$.

  \medskip\textbf{Step 2. Proof that (P) is satisfied on a set of the form~\eqref{eq:set-2} assuming it is satisfied on a set of the
    form~\eqref{eq:set-1}.} Assume that Property (P) is satisfied on the set $S_n$ for some $n\in\{0,1,\ldots,\bar\ell\}$. Let us
  prove that it is satisfied on the set $S'_n$.
  To do so, we aim to apply Theorem~\ref{thm:A2} with
  \begin{align*}
      D_1=E_\emptyset^{(n)}\text{ and }D_2=\bigcup_{\ell=0}^{n}\bigcup_{i\in F_\ell} E_i \cup \bigcup_{\ell=-1}^{n-1} E_\emptyset^{(\ell)}.
  \end{align*}
  % We observe that $D_2\cup\{\partial\}$ is a closed set for the process $X$ and that
  Our induction assumption applies to $D_2\cup\{\d\}$, so that the semi-group $R$ of
  the process $X$ restricted to $D_2\cup\{\d\}$ satisfies Assumption~(A) with %\todo{à vérifier}
  \begin{gather*}
      \theta_{0,R}=\bar{\theta},\quad I_R={F}_0,\quad W_R={\color{black}W_\emptyset+}\sum_{\ell=0}^{n}\sum_{i\in F_\ell}^k W_i, \\
      j_R(x)=\text{index}(x)\vee 0\text{ for all }x\in \bigcup_{\ell=0}^{n}\bigcup_{i\in F_\ell} E_i\cup \bigcup_{\ell=-1}^{n-1}E_\emptyset^{(\ell)}
  \end{gather*}
  and, for all $i\in I_R$,
  \begin{gather*}
      % \label{eq:thmfinite1-proof}
      \nu_{R,i}\propto \nu_i+\sum_{k\geq 0}\bar\theta^{-k-1}\P_{\nu_i}(X_{1}\not\in E_i,\ X_{k+1}\in\cdot)  \\
  % \end{align}
  % and
  % \begin{align}
  %     \label{eq:thmfinite2-proof}
      \eta_{R,i}(x)>0\text{ for all }x\in D_2\text{ with }\mathbb P_x(T_{E_i}<\infty)>0,
  \end{gather*}
  and
  \[
      \eta_{R,i}(x)=0\text{ for all }x\in D_2\text{ with }\mathbb P_x(T_{E_i}<\infty)=0.
  \]

  In addition, similarly to Step~1, one checks that the first and second part of~\eqref{eq:A2} hold true. In particular, for all
  $x\in D_1$,
  \[
    \mathbb{E}_x(W_\emptyset(Y^{(\emptyset)}_k)\mathbbm{1}_{Y^{(\emptyset)}_k\in E^{(n)}_\emptyset})\leq \mathbb{E}_x(W_\emptyset(Y^{(\emptyset)}_k)\mathbbm{1}_{Y^{(\emptyset)}_k\in
      E_\emptyset})\leq C\gamma^k W_\emptyset(x).
  \]
    We deduce that Assumption~(A2) holds true and we can thus apply Theorem~\ref{thm:A2}. Since, for all $x\in D_1$ there exists $i\in F_{n}$ such that  $\mathbb P_x(T_{E_i}<+\infty)>0$, we deduce that, for all $x\in D_1$, 
\[\max_{k\geq 0} j_R(\delta_x P_kQ)=n=\text{index}(x).\]
This and Theorem~\ref{thm:A2} proves that Property (P) is satisfied on the set $S'_n$.

\medskip\textbf{Step 3. Proof that (P)  is satisfied on a set of the form~\eqref{eq:set-1} assuming it is satisfied on a set of the
  form~\eqref{eq:set-2}.} Assume that Property (P) is satisfied on the set $S'_n$ for some $n\in\{1,\ldots,\bar\ell-1\}$. Let us
prove that is it satisfied on the set $S_{n+1}$. In this case, we aim to apply Theorem~\ref{thm:A3} with
  \begin{align*}
    D_1=\bigcup_{i\in F_{n+1}} E_i\text{ and }D_2=\bigcup_{\ell=0}^{n}\bigcup_{i\in F_\ell} E_i \cup \bigcup_{\ell=-1}^{n} E_\emptyset^{(\ell)}.
\end{align*}
Using our induction assumption, we deduce that Assumption~(A) holds true for the process restricted to $D_2$, with $j_{0,R}=n$ and $\theta_{0,R}=\bar\theta$. As in Step~1, it is also clear that Assumption~(A) holds true for the process restricted to $D_1$, with $j_{0,P}=0$ and $\theta_{0,P}=\bar\theta$. As in Step~1, we also observe that~\eqref{eq:hyp-A3} holds true with $W_R=W\11_{D_2}$ and $W_P=C_W W \11_{D_1}$.

In order to check that Assumption~(A3) holds true, it remains to prove~\eqref{eq:definelstar} and~\eqref{eq:definelstar-bis}, with
$\ell^*=n$. Since $j_{0,R}=n=\max_{x\in D_2} j_R(x)$, the  equality~\eqref{eq:definelstar} is immediate.
For~\eqref{eq:definelstar-bis}, we observe that, for all $x\in D_1$, $\text{index}(x)=n+1$ and hence that the process starting from
$x$ can reach a point of index $n$ (otherwise, its index would be smaller or equal to $n$ by definition of the sets $F_i$), and hence that $\mathbb P_x(\text{index}(X_{T_{D_2}})=n\text{ and }T_{D_2}<\infty)>0$. Since $\text{index}(y)=n$ implies that $j_R(y)=n$ and $\eta_R(y)>0$ (by induction assumption), we deduce that~\eqref{eq:definelstar-bis} also holds true.

We deduce that Assumption~(A3) holds true and we can thus apply Theorem~\ref{thm:A3}, which concludes the proof.
\end{proof}

\section{Discrete state spaces}
\label{sec:discretelyap}

Let $X=(X_n,n\in\ZZ_+)$ be a Markov chain on a discrete state space $D\cup\{\partial\}$, with $\partial$ absorbing.  %We show in this section that Assumption~(A) holds true under a simple Lyapunov type criterion.
It is well known, when $X$ is aperiodic and irreducible, i.e.\ when $\mathbb P_x(\exists n\geq 0,\ X_n=y)>0$ for all $x,y\in D$,
that existence of a quasi-stationary distribution is implied by the existence of a Lyapunov type function (see for instance~\cite{FerrariKestenEtAl1996,ChampagnatVillemonais2017b}, see also~\cite{DoornPollett2013} for a general account on quasi-stationary distributions for discrete state space models). We show in this section that the irreducibility assumption can actually be removed entirely.

In the following result, we say that $X$ is aperiodic if all states in $D$ are aperiodic (with the usual convention that a state  $x\in D$ such that  $\mathbb P_x(\exists n\geq 0,\ X_n=x)=0$ is said aperiodic).

\begin{thm}
    \label{thm:reduciblediscrete}
Assume that $X$ is aperiodic, that there exists $x_0\in D$ such that $\mathbb P_{x_0}(\exists n\geq 0,\ X_n=x_0)>0$ and that there
exists a function $V:D\to[1,+\infty)$ such that $\{x\in D,\ V(x)\leq C\}$ is finite for all constants $C>0$,
$\mathbb{E}_x (V(X_1)\11_{1<\tau_\d})<+\infty$ for all $x\in D$ and
  \begin{align}
\label{as:lyap}
 \frac{\E_x(V(X_1)\11_{1<\tau_\d})}{V(x)}\xrightarrow[V(x)\to+\infty]{} 0.
\end{align}
Then Assumption~(A) holds true with $W_S=V$ and, in particular, $X$ admits a quasi-stationary distribution. In addition, $\P_{x_0}(X_n\in\cdot\mid n<\tau_\d)$ converges in $\mathcal M(V)$ when $n\to+\infty$ toward a quasi-stationary distribution of $X$. 
\end{thm}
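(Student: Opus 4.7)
The plan is to reduce Theorem~\ref{thm:reduciblediscrete} to the reducible-state-space machinery of Section~\ref{sec:finite-classes} applied to the decomposition of $D$ into communication classes. Decompose $D$ into its (at most countable) communication classes $(C_i)_{i\in I}$ and equip $I$ with the partial order $i\prec j$ iff $C_i$ is accessible from $C_j$ without the reverse. For each $i\in I$, the restricted process $Y^{(i)}$ is irreducible and aperiodic on $C_i$, and since $V(Y^{(i)}_1)\,\11_{1<\tau'}\leq V(X_1)\,\11_{1<\tau_\d}$ on $\{X_0\in C_i\}$, the Lyapunov condition~\eqref{as:lyap} transfers verbatim to $Y^{(i)}$. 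Classical results for irreducible aperiodic Markov chains on discrete state spaces with a coercive Lyapunov function (e.g.~\cite{ChampagnatVillemonais2017b} and references therein) then supply Assumption~(A) for each $Y^{(i)}$ with $W_i$ equal to the restriction of $V$ to $C_i$, $j_i\equiv 0$, $I_i=\{1\}$, $\eta_i>0$ on $C_i$, $\nu_i$ a quasi-stationary distribution on $C_i$, and geometric decay of $\alpha_{i,n}$.

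Next, use the coercivity of $V$ to control the exponential parameters. Set $\bar\theta:=\sup_{i\in I}\theta_{0,i}$; the return-probability hypothesis on $x_0$ forces $\theta_{0,i_{x_0}}>0$, so $\bar\theta>0$. Fix $\gamma\in(0,\bar\theta)$ and, using~\eqref{as:lyap}, pick $K>0$ with $\E_x(V(X_1)\,\11_{1<\tau_\d})\leq\gamma V(x)$ whenever $V(x)>K$. The sublevel set $\{V\leq K\}$ is finite and therefore meets only finitely many classes, while every class $C_i$ disjoint from $\{V\leq K\}$ satisfies, by iteration of the Lyapunov bound, $\E_x(V(Y^{(i)}_n))\leq\gamma^n V(x)$ for all $x\in C_i$, which verifies~\eqref{as:weakweakA} and forces $\theta_{0,i}\leq\gamma$. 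Consequently $\overline{F}=\{i:\theta_{0,i}=\bar\theta\}$ is finite and the exponents $j_{i_0}$ from~\eqref{eq:def-j_i} are uniformly bounded by $|\overline{F}|<\infty$. The global drift bound~\eqref{eq:appli1W} follows from the same $\gamma$-bound on $\{V>K\}$ combined with a crude uniform bound on the finite set $\{V\leq K\}$. The denumerable extension of Theorem~\ref{thm:finite-classes} mentioned in the remark following its statement then yields Assumption~(A) for $X$ with $W_S=V$, $\theta_{0,S}=\bar\theta$ and $I_S=\overline{F}_0$.

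For the convergence starting from $x_0$, apply the same construction to the sub-Markov process obtained by killing $X$ upon leaving $D_{x_0}:=\{y\in D:\exists n\geq 0,\ \PP_{x_0}(X_n=y)>0\}$; since $X$ started at $x_0$ almost surely never leaves $D_{x_0}$, any quasi-stationary distribution of this restricted process is automatically one of $X$. In this restricted setting $x_0$ lies in a class that is either minimal dominant (among accessible ones) or sits above such a class, so~\eqref{eq:thmfinite2} delivers $\eta_S(x_0)>0$. Proposition~\ref{prop:mainbis}~(iii) with $\mu=\delta_{x_0}$ then yields the claimed $\mathcal{M}(V)$-convergence of $\PP_{x_0}(X_n\in\cdot\mid n<\tau_\d)$ to a quasi-stationary distribution.

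The two principal obstacles will be the finiteness of $\overline{F}$ and the uniform geometric decay~\eqref{as:weakweakA} for the non-dominant classes, both resting critically on the coercivity of $V$ and the finiteness of its sublevel sets, together with the legitimacy of invoking the countable-class version of Theorem~\ref{thm:finite-classes}, which reduces to the uniform finiteness of the exponents $j_{i_0}$ and the global drift bound on $V$ established in the previous step.
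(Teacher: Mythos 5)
Your overall strategy coincides with the paper's: decompose $D$ into communication classes, get Assumption~(A) with $j\equiv 0$ and positive $\eta$ on each returning class from the irreducible theory (and treat non-returning singleton classes trivially), use the coercivity of $V$ and~\eqref{as:lyap} to show that all but finitely many classes have a uniform geometric drift at rate $\gamma$ strictly below the dominant exponential parameter, feed this into Theorem~\ref{thm:finite-classes}, and finally restrict to $D_{x_0}$ and invoke~\eqref{eq:thmfinite2} together with Proposition~\ref{prop:mainbis}~(iii) for the convergence from $x_0$. The Lyapunov estimates, the finiteness of $\overline{F}$, and the last paragraph are all correct and match the paper.

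The genuine gap is the step where you ``apply the denumerable extension of Theorem~\ref{thm:finite-classes}'' with \emph{each communication class as its own set $E_i$}, so that $k$ is countably infinite. Theorem~\ref{thm:finite-classes} is stated and proved for finitely many sets $E_1,\dots,E_k$: Step~3 of its proof is an induction over the finitely many levels $G_0,\dots,G_{\bar\ell'}$ in which the constants $C'$ and $\bar\gamma$ are degraded at every level, so the argument does not pass to infinitely many sets, and the remark you cite does not claim otherwise --- it says that denumerably many \emph{communication classes} can be handled precisely because several classes may be grouped inside a single $E_i$ with $i\notin\overline{F}$, the number of sets $E_i$ remaining finite. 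Your assertion that the countable version ``reduces to the uniform finiteness of the exponents $j_{i_0}$ and the global drift bound'' is therefore unsubstantiated. The fix is exactly the grouping you omit, and you already have every ingredient for it: let $J$ be the (cofinite up to finitely many classes) collection of classes on which the drift ratio is at most some $\rho<\bar\theta$ (the paper uses $\rho<\theta_1$, the parameter of the class of $x_0$, which guarantees $\bar\theta>0$), lump $\bigcup_{j\in J}C_j$ into a \emph{single} set satisfying~\eqref{as:weakweakA}, and keep the finitely many remaining classes $C_i$, $i\in J'$, as individual sets. This produces a genuinely finite partition to which Theorem~\ref{thm:finite-classes} applies verbatim, and the rest of your argument then goes through unchanged.
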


\begin{rem}
    Despite its generality, the assumption that there exists $x_0\in D$ such that $\mathbb P_{x_0}(\exists n\geq 0,\ X_n=x_0)>0$ is actually not necessary for the existence of a quasi-stationary distribution. Consider for instance the process with $D=\{1,2,\ldots\}$ and $\partial =0$, with almost sure transition
    from $i$ to $i-1$ for all $i\geq 1$. Then, choosing $\nu(i)=\frac{\theta}{1-\theta}\theta^i$ for all $i\geq 1$ and any $\theta\in(0,1)$, we have
    \begin{align*}
    \mathbb P_\nu(X_1=i)=\nu(i+1)=\frac{\theta}{1-\theta}\theta^{i+1}=\theta\,\nu(i),
    \end{align*}
    so that $\nu$ is a quasi-stationary distribution.
    \erem
\end{rem}

\begin{rem}
    In~\eqref{as:lyap}, we assumed for simplicity that $ \frac{\E_x(V(X_1)\11_{1<\tau_\d})}{V(x)}\xrightarrow[V(x)\to+\infty]{} 0$.
    However, a straightforward adaptation of the proof leads to a finer result: denoting by $C_i$, $i\in I$ with
    $I=\mathbb{N}:=\{1,2,\ldots\}$ or $I=\{1,\ldots,n\}$ for some $n\geq 1$, the collection of communication classes of the process, and by $\theta_i$ the exponential convergence parameter associated to each $C_i$, it is sufficient to assume that
    \begin{align*}
%    \label{as:lyap}
    \limsup_{V(x)\to+\infty} \frac{\E_x(V(X_1)\11_{1<\tau_\d})}{V(x)}<\sup_{i\in I}\theta_i.
    \end{align*}
    {\color{black} Another natural and straightforward adaptation of the result is to replace $V$ by any function $V':D\to[1,+\infty)$ without assuming that  $\{x\in D,\ V'(x)\leq C\}$ is finite for all $C\geq 0$, but such that, for a non-decreasing sequence of finite sets $(K_n)_{n\geq 0}$ such that $D=\cup_n K_n$, we have
    \begin{align*}
    %    \label{as:lyap}
    \limsup_{n\to+\infty} \inf_{x\notin K_n}\frac{\E_x(V'(X_1)\11_{1<\tau_\d})}{V'(x)}<\sup_{i\in\mathbb N}\theta_i. 
    \end{align*}}
    \erem
\end{rem}

\begin{rem}
    The aperiodicity assumption is actually not needed for all $x\in D$: one easily checks that it is only required over
    communication classes whose exponential convergence parameter is maximal. More generally, adaptation of these results to periodic
    processes is common procedure {\color{black}(see e.g.~\cite{ChampagnatVillemonais2022})}, and we leave its details to the interested reader.
    \erem
\end{rem}

\begin{proof}[Proof of Theorem~\ref{thm:reduciblediscrete}]
    For all $x\in D$, let $C_x$ be the communication class of $x$, and let $(x_i)_{i\in I}$, where $I$ is either $\mathbb{N}$ or
    $\{1,\ldots,n\}$ for some $n$, be such that $D$ is the disjoint union of the sets $C_{x_i}$, $i\in I$. We take (without loss of generality) $x_1=x_0$ and write $C_i$ instead of $C_{x_i}$.
    
    Let $i\in I$ be such that $\mathbb P_{x_i}(\exists n\geq 0,\ X_n=x_i)>0$. By assumption, this is the case for $i=1$. Then the
    process $X$ restricted to $C_{i}$ is irreducible and satisfies Assumption~(E) in~\cite{ChampagnatVillemonais2016b} (this is a
    direct adaptation to the discrete time setting of the proof of Theorem~5.1 in the last reference). {\color{black}
      By~\cite[Corollary 2.7]{ChampagnatVillemonais2016b}, this implies that} the process $X$ restricted to $C_{i}$ satisfies
    Assumption~(A) with $j_S\equiv 0$, $\eta_S$ positive and $W_S=\restriction{V}{C_i}$. We denote by $\theta_i$ the associated
    exponential convergence parameter. In particular, it follows from~(A) that there exists a constant $A_i$ such that, for all
    $x\in C_i$ and all $n\geq 0$,
    \begin{equation}
      \label{eq:last-1}
      \mathbb{E}_x(V(X_n)\11_{n<T_{\{\d\}\cup D\setminus C_i}})\leq A_i\theta_i^n V(x).
    \end{equation}
    
    Let $i\in I$ be such that $\mathbb P_{x_i}(\exists n\geq 0,\ X_n=x_i)=0$. Then $C_{i}=\{x_i\}$.%  and
    % \begin{align*}
    % \frac{\mathbb E_{x_i}( V(X_1)\11_{X_1\in C_{i}})}{V(x_i)}=0.
    % \end{align*}
    
    Now, define
    \[
    J:=\left\{i\in I,\  \frac{\E_x(V(X_1)\11_{1<\tau_\d})}{V(x)}<\theta_1\ \forall x\in C_i\right\}.
    \]
%    and (observe that, if $\mathbb P_{x_i}(\exists n\geq 0,\ X_n=x_i)=0$, t)
%    \[
%    J''_i:=\left\{j\geq 1,\  \mathbb P_{x_i}(\exists n\geq 0,\ X_n=x_i)=0\right\}.
%    \]
    By assumption, there exists only finitely many points $x\in D$ such that $\frac{\E_x(V(X_1)\11_{1<\tau_\d})}{V(x)}\geq \theta_1/2$, and hence there exists $\rho<\theta_1$ such that
    \[
    J=\left\{i\in I,\  \frac{\E_x(V(X_1)\11_{1<\tau_\d})}{V(x)}\leq\rho\ \forall x\in C_i\right\}
    \]
    In particular, for all $x\in \cup_{j\in J} C_j$, 
    \begin{align*}
    \frac{\mathbb E_x( V(X_1)\11_{X_1\in \cup_{j\in J} C_j})}{V(x)}\leq \rho,
    \end{align*}
    so we deduce from Markov's property that, for all $n\geq 1$, {\color{black} using the notation $\tau_J:=T_{\{\d\}\cup D\setminus \cup_{j\in J} C_j}$,
    \begin{equation}
      \label{eq:last-2}
      \mathbb E_x\left( V(X_n)\11_{n<\tau_J}\right)\leq \rho^n V(x).
    \end{equation}}
    Note that, by assumption, $I\setminus J$ is finite. Recall that all $i\in I\setminus J$ is either such that
    $\mathbb P_{x_i}(\exists n\geq 0,\ X_n=x_i)=0$ and $C_i=\{x_i\}$, or such that $\mathbb P_{x_i}(\exists n\geq 0,\ X_n=x_i)>0$,
    which implies that the process restricted to $C_{i}$ satisfies Assumption~(A) as above with exponential convergence parameter
    $\theta_i$.
%so that
%\[
%\mathbb E_x\left( V(X_n)\11_{T_{\{\d\}\cup D\setminus C_i} > n}\right)\leq \rho^n V(x),\quad\forall x\in C_i.
%\]
    We then define
    \[
      J':=\left\{i \in I\setminus J\text{ such that }\mathbb P_{x_i}\left(\exists n\geq 0,\ X_n=x_i\right)=0\right\}
    \]
    and, setting $\bar\theta =\max_{i\in I\setminus(J\cup J')}\theta_i$, 
    \[
      J'':=\left\{i\in I\setminus(J\cup J')\text{ such that }\theta_i<\bar\theta\right\}.
    \]
    Since $J''$ is finite, $\hat\theta:=\sup_{j\in J''} \theta_i<\bar\theta$.
    
    We now set
    \[ 
      E_\emptyset=\bigcup_{j\in J\cup J'\cup J''} C_j 
    \]
    and enumerate the $C_i$, $i\in I\setminus (J\cup J'\cup J'')$, as $E_1,\ldots,E_k$. We shall apply
    Theorem~\ref{thm:finite-classes} to the partition of $D$ into the disjoint sets $E_\emptyset,E_1,\ldots,E_k$. Note that Assumptions (B1)
    and (B2) are satisfied for all $E_i$, $1\leq i\leq k$, with $W_i=V_{\vert E_i}$. Note also that, because of~\eqref{as:lyap} and
    since $V\geq 1$, Assumption~(B4) is satisfied with $W=V$, i.e.\ for all $x\in D$,
    \begin{equation}
      \label{eq:last-3}
      \mathbb E_x (V(X_1)\11_{1<\tau_\d})\leq {A}V(x)
    \end{equation}
    for some constant $A$.

    {\color{black} Let us now check that Assumption~(B3) is satisfied with $W_\emptyset=V_{\vert E_\emptyset}$. We set
      $\gamma_\emptyset=\rho\vee \hat \theta<\bar\theta$. Fix $n\geq 0$. Given any path $(X_k,0\leq k\leq n)$ of $X$ in
      $E_\emptyset$, we introduce an auxiliary process $(J_k,0\leq k\leq n)$ defined as follows: we set $J_k=j\in J'\cup J''$ if
      $X_k\in C_j$, and otherwise, we set $J_k=\aleph$. This means that $J_k=\aleph$ whenever $X_k\in C_j$ for any $j\in J$. Given
      any path $\mathbf{j}=(j_0,\ldots,j_n)\in (\{\aleph\}\cup J'\cup J'')^{n+1}$ of $(J_k,0\leq k\leq n)$, we denote by
      $n_\text{trans}(\mathbf{j})$ the number of transitions in the sequence $\mathbf{j}$, that is the number of
      $k\in\{0,\ldots,n-1\}$ such that $j_k\neq j_{k+1}$ and by $n''(\mathbf{j})$ the number of visits of $J''$ in $\mathbf{J}$, that
      is the number of pairs $(k,\ell)\in\{0,\ldots,n\}$ such that $k<\ell$, $j_k=j_{k+1}=\ldots=j_\ell\in J''$, $j_{k-1}\neq j_k$ or
      $k=0$ and $j_{\ell+1}\neq j_\ell$ or $\ell=n$.

      We shall prove by induction on $n_\text{trans}(\mathbf{j})$ that for all
      $\mathbf{j}\in \bigcup_{n\geq 0}(\{\aleph\}\cup J'\cup J'')^{n+1}$,
      \begin{equation}
        \label{eq:induction-last}
        \mathbb{E}_x\left(V(X_n)\mathbbm{1}_{(J_k,0\leq k\leq n)=\mathbf{j}}\right)\leq
        A^{n_\text{trans}(\mathbf{j})}\left(\max_{j\in J''}A_j \right)^{n''(\mathbf{J})}\gamma_\emptyset^{n-n_\text{trans}(\mathbf{j})}.
      \end{equation}
      First, if $\mathbf{j}\in(\{\aleph\}\cup J'\cup J'')^{n+1}$ is such that $n_\text{trans}(\mathbf{j})=0$, this means that
      $J_0=J_1=\ldots=J_n$. If $J_0\in J'$, this means that $n=0$, so~\eqref{eq:induction-last} is clear. If $J_0\in J''$, then
      $n''(\mathbf{j})=1$ and~\eqref{eq:induction-last} follows from~\eqref{eq:last-1}. If $J_0=\aleph$,~\eqref{eq:induction-last}
      follows from~\eqref{eq:last-2}.

      Assume now that we have proved~\eqref{eq:induction-last} for all $\mathbf{j}$ such that $n_\text{trans}(\mathbf{j})=k\geq 0$
      and let $\mathbf{j}\in(\{\aleph\}\cup J'\cup J'')^{n+1}$ be such that $n_\text{trans}(\mathbf{j})= k+1$. This means that
      $\mathbf{j}=(\mathbf{j}',j,j,\ldots,j)$ with $j\in \{\aleph\}\cup J'\cup J''$ repeated $\ell$ times for some $\ell\geq 1$ and
      $\mathbf{j}'\in(\{\aleph\}\cup J'\cup J'')^{n-\ell+1}$ is such that $n_\text{trans}(\mathbf{j}')=n$. If
      $j=\aleph$, it follows from Markov property that
    \begin{align*}
      \mathbb{E}_x\left(V(X_n)\mathbbm{1}_{(J_k,0\leq k\leq n)=\mathbf{j}}\right)
      & =\mathbb{E}_x\left[\mathbbm{1}_{(J_k,0\leq k\leq n-\ell+1)
        =(\mathbf{j}',j)}\mathbb{E}_{X_{n-\ell+1}}\left(V(X_{\ell-1})\mathbbm{1}_{J_p=j,\,0\leq
        p\leq \ell-1)=\mathbf{j}}\right)\right) \\ & \leq \gamma_\emptyset^\ell \mathbb{E}_x\left[\mathbbm{1}_{(J_k,0\leq k\leq n-\ell)
        =\mathbf{j}'}\mathbb{E}_{X_{n-\ell}} \left(V(X_1)\right)\right) \\ &\leq A \gamma_\emptyset^\ell
                                                                            \mathbb{E}_x\left[\mathbbm{1}_{(J_k,0\leq k\leq n-\ell)
        =\mathbf{j}'}V(X_{\ell'})\right),
    \end{align*}
    where we used~\eqref{eq:last-2} in second line and~\eqref{eq:last-3} in the last line. Observing that
    $n''(\mathbf{j})=n''(\mathbf{j}')$,~\eqref{eq:induction-last} for $\mathbf{j}$ follows from the induction assumption. We proceed similarly
    if $j\in J'$ using that $\ell=1$ and~\eqref{eq:last-3} and if $j\in J''$ using~\eqref{eq:last-1} and~\eqref{eq:last-3}.

    Let $n$ be fixed and let ${\cal P}_n$ be the set of $\mathbf{j}\in (\{\aleph\}\cup J'\cup J'')^{n+1}$ such that
    $\mathbb{P}_x((J_k,0\leq k\leq n)=\mathbf{j})>0$. For all $\mathbf{j}\in{\cal P}_n$, since the $C_j$ are communication classes
    for all $j\in J'\cup J''$, they are visited at most once by $\mathbf{j}$, that is there exists at most one $i\in\{0,\ldots,n-1\}$
    such that $j_i=j$ and $j_{i+1}\neq j$, and at most one $i'\in\{1,\ldots,n\}$ such that $j_{i'}\neq j$ and $j_{i'+1}=j$, and in
    addition if $j\in J'$, there exists at most one $i''\in\{0,\ldots,n\}$ such that $j_{i''}=j$. This means that, for all
    $\mathbf{j}\in{\cal P}_n$, $n''(\mathbf{j})\leq \# J''$ and $n_\text{trans}(\mathbf{j})\leq 2(\# J'+\# J'')$, and thus it follows
    from~\eqref{eq:induction-last} that
    \[
      \mathbb{E}_x\left(V(X_n)\mathbbm{1}_{(J_k,0\leq k\leq n)=\mathbf{j}}\right)\leq C'\gamma_\emptyset^{n}
    \]
    for a constant $C'$ independent of $n$.

    Now
    \[
      \# {\cal P}_n\leq (\# J'+\# J'')!\,(n+1)^{2\# J'+2\# J''+1}
    \]
    since, to construct a path $\mathbf{j}\in{\cal P}_n$, one must
    first choose an order of (possibly empty) visits of the classes $C_j$ for $j\in J'\cup J''$ and then one must choose the length of the (possibly
    empty) path in $\aleph$ before each of these visits, the length of this visit and the length of the (possibly empty) path in $\aleph$ after
    this visit, and they are all less than $n+1$. Therefore, given any $\gamma'_\emptyset\in(\gamma_\emptyset,\bar{\theta})$, there
    exists a constant $C''$ independent of $n$ such that
    \[
      \mathbb{E}_x\left(V(X_n)\mathbbm{1}_{n<T_{\{\d\}\cup E_1\cup\ldots\cup E_k}}\right)=\sum_{\mathbf{j}\in{\cal
            P}_n}\mathbb{E}_x\left(V(X_n)\mathbbm{1}_{(J_k,0\leq k\leq n)=\mathbf{j}}\right) \leq C'' (\gamma'_\emptyset)^n.
      \]
      Hence~(B3) is proved and we deduce from Theorem~\ref{thm:finite-classes} that $X$ satisfies Assumption~(A).
    }

    In order to prove the last statement of Theorem~\ref{thm:reduciblediscrete}, we apply the above proof to the process $X$ restricted to $D_{x_0}\cup\{\d\}$, where
    \[
    D_{x_0}=\{x\in D\text{ such that }\P_{x_0}(\exists n\geq 0,\ X_n=x)>0\}.
    \]
    We deduce from~\eqref{eq:thmfinite2} in Theorem~\ref{thm:finite-classes} that $\eta_S(x)>0$ in Assumption~(A) for this process,
    so that Proposition~\ref{prop:mainbis}(iii) entails the claim for $X$ restricted to $D_{x_0}\cup\{\d\}$. But the definition of
    $D_{x_0}$ clearly implies that $T_{\{\d\}\cup D\setminus D_{x_0}}=\tau_\d$ $\mathbb{P}_{x_0}$-a.s.,
% $X$ restricted to $D_{x_0}\cup\{\d\}$ has the same dynamic as $X$ when starting from $x_0$, 
which concludes the proof.
\end{proof}

\subsubsection*{Acknowledgements}

We thank two anonymous referees for their useful comments.

The work of N.C. is partially funded by the Chair ``Mod\'elisation Math\'ematique et
Biodiversit\'e'' of VEOLIA-Ecole Polytechnique-MNHN-F.X and by the European Union (ERC, SINGER, 101054787). Views and opinions expressed are however those of the author(s) only and do not necessarily reflect those of the European Union or the European Research Council. Neither the European Union nor the granting authority can be held responsible for them.

\def\cprime{$'$}

\end{document}